\newtheorem{thm}{Theorem}[section]
\newtheorem{lem}[thm]{Lemma}
\newtheorem{prop}[thm]{Proposition}
\theoremstyle{definition}
\newtheorem{defn}[thm]{Definition}
\theoremstyle{remark}
\numberwithin{equation}{section}
\numberwithin{thm}{section}
\newcommand{\Z}{{\mathbb{Z}}}
\newcommand{\C}{{\mathbb{C}}}
\newcommand{\R}{{\mathbb{R}}}
\newcommand{\T}{{\mathbb{T}}}
\DeclareMathOperator*{\dist}{dist}
\newcommand{\eps}{{\varepsilon}}
\newcommand{\lsm}{\lesssim}
\DeclareMathOperator{\supp}{supp}
\DeclareMathOperator*{\wlim}{w-lim}
\newcommand{\qtq}[1]{\quad\text{#1}\quad}
\newcommand{\pd}{\mathcal P_D}
\newcommand{\pds}{P_D\mbox{NLS}}
\newcommand{\nalf}{\mbox{NLS}_\alpha}
\newcommand{\cha}{\chi_n^0}
\newcommand{\chb}{\chi_n^1}
\newcommand{\chc}{\chi_n^2}
\newcommand{\chd}{\chi_n^3}
\newcommand{\che}{\chi_n^4}
\newcommand{\chj}{\chi_n^j}
\newcommand{\pl}{P_{lo}}
\newcommand{\ph}{P_{hi}}
\newcommand{\pmd}{P_{med}}
\newcommand{\pmn}{\mathcal P_{M_n}}
\newcommand{\tu}{\tilde u}
\newcommand{\ul}{u_{lo}}
\newcommand{\uh}{u_{hi}}
\newcommand{\tnj}{\tilde x_n^j}
\newcommand{\pnns}{ P_{M_n}\mbox{NLS}}
\newcommand{\pml}{\mathcal P_{M_n}^{L_n}}
\newcommand{\tun}{\tilde u_n}
\newcommand{\D}{\mathcal D}
\newcommand{\tp}{([-T,T]\times\btn)}
\newcommand{\tr}{([-T,T]\times\R^2)}
\newcommand{\ftr}{L_{t,x}^ 4(I\times\R^2)}
\newcommand{\atr}{L_{t,x}^{\frac 43}(I\times\R^2)}
\newcommand{\ir}{(I\times\R^2)}
\newcommand{\Po}{\mathcal P}
\newcommand{\pmll}{P_{\le 2DM_n}^{L_n}}
\newcommand{\btn}{\mathbb{T}_n}
\let\Re=\undefined\DeclareMathOperator*{\Re}{Re}
\let\Im=\undefined\DeclareMathOperator*{\Im}{Im}
\newcounter{smalllist}
\title[Non-squeezing for the cubic NLS on $\R^2$]{Finite-dimensional approximation and non-squeezing for the cubic nonlinear Schr\"odinger equation on $\R^2$}
\author[R. Killip]{Rowan Killip}
\address{Department of Mathematics, University of California, Los Angeles, CA 90095}%
\email{killip@math.ucla.edu}
\author[M. Visan]{Monica Visan}
\address{Department of Mathematics, University of California, Los Angeles, CA 90095}
\email{visan@math.ucla.edu}
\author[X. Zhang]{Xiaoyi Zhang}
\address{Department of Mathematics, University of Iowa, Iowa City, IA 52242}%
\email{xiaoyi-zhang@uiowa.edu}
{\normalsize }
\begin{document}

\begin{abstract}
We prove symplectic non-squeezing (in the sense of Gromov) for the cubic nonlinear Schr\"odinger equation on $\R^2$.  This is the first symplectic non-squeezing result for a Hamiltonian PDE in infinite volume.  As the underlying symplectic Hilbert space is $L^2(\R^2)$, this requires working with initial data in this space.  This space also happens to be scaling-critical for this equation.  Thus, we also obtain the first unconditional symplectic non-squeezing result in such a critical setting.

More generally, we show that solutions of this PDE can be approximated by a finite-dimensional Hamiltonian system, despite the wealth of non-compact symmetries: scaling, translation, and Galilei boosts.  This approximation result holds uniformly on bounded sets of initial data.  Complementing this approximation result, we show that all solutions of the finite-dimensional Hamiltonian system can be approximated by the full PDE.

A key ingredient in these proofs is the development of a general methodology for obtaining uniform global space-time bounds for suitable Fourier truncations of dispersive PDE models.
\end{abstract}

\maketitle

\section{Introduction}

In this paper, we consider the defocusing cubic nonlinear Schr\"odinger equation in two spatial dimensions:
\begin{align}\label{NLS}
iu_t= - \Delta u + |u|^2 u, \tag{NLS}
\end{align}
which describes the evolution (in time) of a function $u:\R_t^{ }\times\R^2_x\to \C$.

Formally, at least, this is the Hamiltonian evolution associated to the standard symplectic structure on $L^2(\R^2)$ and the Hamiltonian
\begin{align}\label{Hamiltonian}
E(v) := \int_{\R^2} \tfrac12 |\nabla v(x)|^2  +  \tfrac14 |v(x)|^4 \,dx.
\end{align}
As it will be important later in the discussion, let us quickly recall what is meant by the standard symplectic structure:

\begin{defn}
The \emph{standard symplectic structure} on a complex Hilbert space $\mathcal H$ is
$
\omega(z,\zeta) = -\Im \langle z,\zeta\rangle_{\mathcal H}.
$
(Note that in this paper, inner-products are $\C$-linear in the second variable, following Dirac's convention.)  Equivalently, given coordinates $x_k,p_k:\mathcal H\to\R$ adapted to an orthonormal basis $\{e_k\}$ in the sense that
$
x_k(v)+ip_k(v) = \langle e_k, v\rangle,
$
then
\begin{align}\label{E:canoncoord}
\omega = \sum_k dp_k\wedge dx_k.
\end{align}
Coordinate systems in which $\omega$ takes this explicit form are known as \emph{canonical coordinates}.
\end{defn}

The problem of well-posedness of \eqref{NLS} for general data in $L^2(\R^2)$ was a long-standing and conspicuous problem in dispersive PDE until its recent resolution by Dodson \cite{Dodson} (see also \cite{KTV} for the case of radial data):

\begin{thm}[Dodson, \cite{Dodson}]\label{T:Dodson}
The problem \eqref{NLS} is globally well-posed for initial data $u_0\in L^2(\R^2);$ moreover, there is a function $C:[0,\infty)\to[0,\infty)$ so that
$$
\| u \|_{L^4_{t,x}(\R\times\R^2)} \leq C\left( \|u_0\|_{L^2(\R^2)} \right).
$$
\end{thm}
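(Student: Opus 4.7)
The plan is to follow the concentration-compactness / rigidity-theorem framework pioneered by Kenig--Merle and adapted to the mass-critical setting by Tao--Visan--Zhang, Killip--Tao--Visan, and Dodson. First, standard Strichartz estimates give local well-posedness in $L^2(\R^2)$ together with a small-data scattering result: whenever $\|u_0\|_{L^2}$ is sufficiently small, the solution is global and its $L^4_{t,x}$-norm is controlled by $\|u_0\|_{L^2}$. A perturbation-theory argument, exploiting that \eqref{NLS} is defocusing, reduces the theorem to showing that
\[
L(M) := \sup\bigl\{\|u\|_{L^4_{t,x}(\R\times\R^2)} : u \text{ solves \eqref{NLS} with } \|u_0\|_{L^2(\R^2)}\le M \bigr\}
\]
is finite for every $M>0$.

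Second, supposing for contradiction that $L(M_*)=\infty$ at some threshold mass $M_*$, I would use a linear profile decomposition adapted to the non-compact symmetries of \eqref{NLS}---scaling, spatial translation, and Galilei boosts---together with nonlinear profiles and stability theory to extract a minimal-mass blowup solution $u_*$ whose orbit modulo these symmetries is precompact in $L^2(\R^2)$. Such an almost periodic minimal counterexample carries a frequency scale function $N(t)$, and following the reduction of Killip--Tao--Visan it falls into one of three classes: a self-similar solution with $N(t)\sim t^{-1/2}$, a double high-to-low frequency cascade, or a soliton-like solution with $N(t)\equiv 1$.

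Third, each enemy must be excluded. For the self-similar and frequency-cascade scenarios, additional regularity estimates (proving $u_*\in H^1$, or sufficient $L^2$-concentration on tiny spatial scales) contradict either conservation of energy or conservation of mass. The soliton-like enemy is the main obstacle and the heart of Dodson's contribution. Here one needs a \emph{long-time Strichartz estimate} controlling low-frequency pieces of $u_*$ on very long time intervals, combined with a \emph{frequency-localized interaction Morawetz inequality}. In two dimensions the natural interaction Morawetz quantity yields only a non-local weighted bound (unlike the cleaner $\|u\|_{L^4_{t,x}}^4$ available in higher dimensions), so performing the frequency localization while controlling the error terms generated by the commutator of Littlewood--Paley projections with the nonlinearity demands substantial new input---this is the technical crux. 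Once established, combining the Morawetz bound with the almost periodicity of $u_*$ forces its low-frequency mass to vanish, contradicting the conservation of $\|u_*(t)\|_{L^2}=M_*>0$ and completing the proof.
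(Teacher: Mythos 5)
This theorem is not proved in the paper at all: it is quoted verbatim from Dodson's paper and used throughout as a black box (see, e.g., the proof of Lemma~\ref{lm:ub}, where the finiteness of $\mathcal L(M)$ is simply attributed to Theorem~\ref{T:Dodson}). So there is no internal proof to compare against; the only honest benchmark is Dodson's argument itself, and your outline is a faithful description of its architecture --- reduction to a finiteness statement for $L(M)$, extraction of an almost periodic minimal-mass blowup solution via the Merle--Vega linear profile decomposition and stability theory, the Killip--Tao--Visan trichotomy of enemies, and the exclusion of the soliton-like enemy via long-time Strichartz estimates combined with a frequency-localized interaction Morawetz inequality.

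That said, what you have written is a roadmap, not a proof. The two estimates you name in the third step --- the long-time Strichartz estimate for the low-frequency part of $u_*$ on intervals whose length is measured against $\int N(t)^3\,dt$, and the frequency-localized interaction Morawetz inequality in two dimensions (where the bilinear Morawetz quantity controls only $\bigl\| |\nabla|^{1/2}|u|^2 \bigr\|_{L^2_{t,x}}^2$ rather than $\|u\|_{L^4_{t,x}}^4$) --- constitute essentially the entire content of Dodson's paper, and you explicitly defer them as ``demanding substantial new input.'' In particular, the error terms produced by commuting the Littlewood--Paley projection with the nonlinearity in the Morawetz argument are exactly what the long-time Strichartz estimate is designed to control, and without proving that estimate (a lengthy induction on frequency scales) the contradiction with mass conservation cannot be closed. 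You have correctly located the crux, but you have not crossed it; if the intent is to supply a self-contained proof rather than to cite \cite{Dodson}, the gap is the whole of the hard analysis.
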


For most purposes, this may be regarded as the definitive result in this direction; by providing uniform space-time bounds, it further guarantees that solutions scatter (see \cite[Theorem~2.1]{CazenaveWeissler}) and are stable under the presence of an additional forcing term in the equation (see \cite[Lemma~3.6]{Matador}).
  
In this paper, we consider two questions about (NLS) for which Theorem~\ref{T:Dodson} does not suffice.  The first question is whether one can approximate the evolution of an ensemble of initial data (specifically, a ball in $L^2$) \emph{simultaneously} by a \emph{single} finite-dimensional system.  The second is the question is whether the (NLS) flow obeys symplectic non-squeezing (in the sense of Gromov, \cite{Gromov}).  We answer both questions in the affirmative; see Theorems~\ref{T:fad} and~\ref{thm:nsqz}, respectively.

In Theorem~\ref{T:dad}, we will also show a converse to our finite-dimensional approximation result, namely, that \emph{all} solutions to the finite-dimensional system can be mimicked by solutions to (NLS).  Again, this is not simply a direct corollary of Theorem~\ref{T:Dodson}.

A key motivation for wishing to connect solutions to PDE to those of a finite-dimensional system (in either direction) is that it allows one to transfer general results for finite-dimensional systems to the PDE setting, with non-squeezing being just one example.  A second incentive for seeking such approximation results is that they guarantee that one can reliably and efficiently simulate the dynamics numerically --- not only the dynamics of individual trajectories, but even statistical questions, that is, those relating to ensembles of solutions.  Moreover, given the increasing interest in the study of dispersive PDE with random initial data, we suspect there will be a growing need for methods adapted to ensembles of data, rather than merely individual solutions.

The particular challenge of proving non-squeezing for Hamiltonian PDE has been a major stimulus for work on finite-dimensional approximations.  For this reason, we first focus our attention on this subject, beginning with an account of the original theorem of Gromov.

To give proper context for Gromov's non-squeezing theorem, we must first review some rudimentary Hamiltonian mechanics.  We speak here about the finite-dimensional setting, where everything is well settled; little is clear cut in the infinite-dimensional setting. Indeed, while nineteenth century mathematics is ample for a full understanding of well-posedness of Hamiltonian ODE, questions of well-posedness of Hamiltonian PDE are, even today, the subject of much intensive investigation.

Let us endow $\C^n$ with the standard symplectic structure laid out above.  A Hamiltonian function $H:\C^n\to \R$ then gives rise to dynamics in the form $\dot z = X_H(z)$, where the vector-field $X_H$ is uniquely determined by the relation
$$
d H(\cdot) = \omega(\cdot,X_H).
$$
In canonical coordinates, this can be expressed equivalently as Hamilton's equations:
$$
\frac{d x_k}{dt} = \frac{\partial H}{\partial p_k} \qtq{and}
\frac{d p_k}{dt} = - \frac{\partial H}{\partial x_k}.
$$

For reasonable $H$, these equations are well-posed and the resulting flow map $\Phi:\R\times\C^n\to\C$ is a one-parameter group of symplectomorphisms.  A symplectomorphism is a diffeomorphism that preserves the symplectic form $\omega$.  In differential form, this means that $\Phi(t)^*\omega = \omega$; however, we find that the integral form expresses this notion more vividly: the integrals of $\omega$ over surfaces $\sigma:[0,1]^2\to \C^n$ are unchanged as the surfaces evolve with the flow:
$$
\int_\sigma \omega = \int_{\Phi(t)\circ\sigma} \omega.
$$
For discussions in infinite dimensions, see, for example, \cite{ChernoffMarsden,KuksinBook}.

Preservation of the symplectic form by Hamiltonian flows guarantees the preservation of its exterior powers, in particular, preservation of the volume form $\omega\wedge\omega\wedge\cdots\wedge\omega$.  Thus Hamiltonian flows preserve phase-space volume, a result known as Liouville's Theorem.  (Incidentally, as there is no translation-invariant measure in infinite dimensions, there can be no Liouville's Theorem, either.)

Liouville's Theorem yields significant information on whether a Hamiltonian flow can carry one region of phase-space into another: the volume of the latter must exceed that of the former.  In fact, it is shown in \cite{DacMoser,Moser} that (barring topological obstructions) this is the only restriction on the possibility of flowing one region into another in a volume preserving manner.

One can also consider the question of whether preservation of volume is the only obstruction to the existence of a symplectomorphism between two sets.  In two dimensions, a symplectic form is a volume form and so this question may be regarded as adequately resolved in view of the results of \cite{DacMoser,Moser}.   In higher dimensions, however, the question remains whether preservation of the symplectic form encodes substantial restrictions beyond mere volume conservation.  The non-squeezing theorem of Gromov (see Corollary \S0.3A of \cite{Gromov}) shows that it does in a dramatic fashion.  For our later purposes, it will be more convenient to phrase this result as follows:

\begin{thm}[Gromov, \cite{Gromov}]\label{T:Gromov}
Fix $0<r<R<\infty$ and $\alpha\in\C$.  Let $B(z_*,R)$ denote the ball of radius $R$ centered at $z_*\in\C^{n}$, let $\ell\in\C^n$ have unit length, and suppose $\phi:B(z_*,R)\to\C^{n}$ is a smooth symplectomorphism (with respect to the standard structure).  Then there exists $z\in B(z_*,R)$ so that
\begin{align*}
\bigl| \langle \ell,\phi(z)\rangle - \alpha \bigr| > r.
\end{align*}
Equivalently, $\phi$ does not map the ball $B(z_*,R)$ wholly inside the cylinder $\{\zeta \in \C^n : |\langle \ell,\zeta\rangle -\alpha | <r\}$ (despite the fact that the volume of the ball is finite and the volume of the cylinder is infinite).
\end{thm}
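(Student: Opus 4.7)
The plan is to argue by contradiction via the monotonicity of a symplectic invariant, the Gromov width
$$ w(U) := \sup\bigl\{\pi\rho^2 : \text{there exists a symplectic embedding } B(0,\rho) \hookrightarrow U\bigr\},$$
defined for any open $U \subset \C^n$. Monotonicity is tautological: if $\phi: U \hookrightarrow V$ is a symplectic embedding, then $w(U) \leq w(V)$. Clearly $w(B(0,R)) \geq \pi R^2$ via the identity, and failure of the conclusion would give a symplectic embedding $B(0,R) \hookrightarrow Z_r$, where $Z_r := \{\zeta : |\langle \ell, \zeta\rangle - \alpha| < r\}$. Thus everything reduces to the single bound $w(Z_r) \leq \pi r^2$.

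Before attacking this bound, I would make cosmetic reductions. Translations are symplectomorphisms, so I may assume $z_* = 0$ and $\alpha = 0$; composing with a unitary transformation of $\C^n$ sending $\ell$ to the first basis vector $e_1$ (also symplectic), I may further assume $Z_r = \{\zeta \in \C^n : |\zeta_1| < r\}$. The task is then purely to rule out symplectic embeddings of balls strictly larger than $\pi r^2$ capacity into this ``standard'' thin cylinder.

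The heart of the argument, which is Gromov's original contribution, uses pseudo-holomorphic curves. Suppose $\psi : B(0, R') \hookrightarrow Z_r$ were a symplectic embedding with $R' > r$. I would compactify the first factor by embedding $Z_r \hookrightarrow S^2(r) \times \C^{n-1}$, where $S^2(r)$ is a 2-sphere of total symplectic area $\pi r^2$, and choose an $\omega$-tame almost complex structure $J$ on this target that coincides with $\psi_* J_0$ on $\psi(B(0,R'))$ and with a split product structure near the ends of the $\C^{n-1}$ factor (here $J_0$ denotes the standard complex structure). By Gromov's existence theorem for $J$-holomorphic spheres in the homology class $[S^2(r) \times \{\mathrm{pt}\}]$, there is a $J$-holomorphic sphere $C$ passing through $\psi(0)$, and its symplectic area equals the area paired with that class, namely $\pi r^2$. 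Pulling $C$ back by $\psi^{-1}$ yields a $J_0$-holomorphic (i.e., genuinely complex analytic) curve through $0 \in B(0, R')$, whose symplectic area of $\pi r^2$ is preserved. But the classical monotonicity formula for complex curves in a Euclidean ball forces any such curve through the center to have area at least $\pi R'^2 > \pi r^2$, the desired contradiction.

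The principal obstacle is the existence step for the $J$-holomorphic sphere: one must verify that the relevant moduli space is nonempty, and control its Gromov compactification so that no bubbling reduces the area below $\pi r^2$ in the limit. This is precisely where the deep machinery of pseudo-holomorphic curves — Gromov compactness, transversality, and energy bounds — enters, and is what makes the non-squeezing theorem dramatically nontrivial in spite of its clean elementary statement. All the remaining ingredients (the width reduction, the normalizations, and the monotonicity formula for ordinary complex curves) are comparatively soft.
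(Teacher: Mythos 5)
The paper offers no proof of this statement: it is quoted directly from Gromov's paper \cite{Gromov} as background, so there is nothing internal to compare your argument against. Judged on its own terms, your proposal is a faithful outline of Gromov's original pseudo-holomorphic-curve argument: the reduction to monotonicity of the Gromov width, the normalization to the standard cylinder, the compactification of the disc factor, the choice of a tame $J$ agreeing with $\psi_*J_0$ on the image of the ball, the existence of a $J$-holomorphic sphere in the class $[S^2\times\mathrm{pt}]$ through $\psi(0)$, and the Lelong monotonicity inequality for the pulled-back complex curve are exactly the standard steps, assembled in the right order.

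Two caveats. First, working with the non-compact target $S^2(r)\times\C^{n-1}$ is more delicate than you let on: the usual treatment compactifies the second factor to a large torus $T^{2n-2}$, precisely because $\pi_2$ of the torus factor vanishes and $[S^2\times\mathrm{pt}]$ has minimal area among spherical classes, which is what rules out bubbling and makes Gromov compactness deliver a genuine sphere in that class; with $\C^{n-1}$ and a split structure only near infinity you additionally need a confinement argument (a maximum principle or monotonicity estimate) to keep the curves in a compact region. Second, and more importantly, the existence-and-compactness step that you yourself flag as ``the principal obstacle'' is asserted rather than proved, and that step \emph{is} the theorem; everything else in your write-up is soft. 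So what you have is a correct road map to the standard proof, with the central ingredient deferred to the literature --- which, to be fair, is exactly what the paper itself does.
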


Note that the real and imaginary parts of $\zeta\mapsto \langle \ell,\zeta\rangle$ provide a pair of canonically conjugate variables --- the position and momentum of a single particle.

From a statistical point of view, the non-squeezing theorem shows that classical mechanics propagates uncertainty in a manner reminiscent of the uncertainty principle in quantum mechanics:  If at the initial time, the positions and momenta of all particles are known only to a resolution $R$, then the position and momentum of even just a single particle cannot be resolved to any scale $r$ finer than $R$ at any later time.

Incidentally, a linear transformation is symplectic if and only if it is invertible and both it and its inverse have the non-squeezing property stated above.  By applying this fact on each tangent space, one then sees that a diffeomorphism, which together with its inverse has the non-squeezing property, is actually a symplectomorphism.  In this way, one can view non-squeezing as a defining property of \emph{all} symplectomorphisms.   Amongst other things, the pursuit of this perspective has lead to a proof that merely \emph{uniform} limits of symplectomorphisms are themselves symplectomorphisms.  For these assertions (and much more), see, for example, \cite{HofZehn,McDuffSalomon}  

In this paper, we will show that the analogue of Gromov's theorem holds for the (infinite-dimensional) dynamics associated to (NLS):

\begin{thm}[Non-squeezing for the cubic NLS]\label{thm:nsqz}
Fix $z_*\in L^2(\R^2)$, $l\in L^2(\R^2)$ with $\|l\|_2=1$, $\alpha\in \C$, $0<r<R<\infty$ and $T>0$.  Then there exists $u_0\in B(z_*, R)$ such that the solution $u$ to \eqref{NLS} with initial data $u(0)=u_0$ satisfies
\begin{align}\label{th:1}
|\langle l, u(T)\rangle-\alpha|>r.
\end{align}
\end{thm}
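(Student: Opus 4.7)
My plan is to argue by contradiction. Suppose that for every $u_0 \in B(z_*, R)$, the solution of \eqref{NLS} with $u(0)=u_0$ satisfied $|\langle l, u(T)\rangle - \alpha| \le r$; equivalently, the time-$T$ NLS flow $\Phi(T):L^2(\R^2)\to L^2(\R^2)$ would send the ball $B(z_*,R)$ entirely into the closed cylinder $C := \{\zeta \in L^2(\R^2) : |\langle l,\zeta\rangle - \alpha| \le r\}$. The plan is then to transfer this purported squeezing to a finite-dimensional Hamiltonian approximation of the flow and derive a contradiction directly from Gromov's Theorem~\ref{T:Gromov}.

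Fix $\eps > 0$ small (precise smallness chosen at the end). Appealing to the finite-dimensional approximation advertised in Theorem~\ref{T:fad}, for all sufficiently large $n$ there is a finite-dimensional symplectic subspace $V_n \subset L^2(\R^2)$, with orthogonal projection $P_n$, carrying a Hamiltonian flow $\Phi_n(t) : V_n \to V_n$ such that
\[
\sup_{w \in B(z_*,R) \cap V_n} \|\Phi(T) w - \Phi_n(T) w\|_{L^2(\R^2)} < \eps.
\]
Taking $n$ larger still, we can also arrange $\|l - P_n l\|_{L^2} < \eps$ and $\|z_* - P_n z_*\|_{L^2} < \eps$. The ball $B_n := \{w \in V_n : \|w - P_n z_*\|_{L^2} < R-\eps\}$ is then contained in $B(z_*,R)\cap V_n$, so our standing assumption together with $L^2$-conservation of mass for $\Phi(T)$ yields, for every $w \in B_n$,
\[
\bigl|\langle P_n l, \Phi_n(T) w\rangle - \alpha\bigr| \le |\langle l, \Phi(T) w\rangle - \alpha| + \|l - P_n l\|\,\|\Phi(T) w\| + \|P_n l\|\,\|\Phi(T) w - \Phi_n(T) w\| \le r + C\eps,
\]
where $C := \|z_*\|_{L^2} + R + 1$.

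Writing $\tilde l := P_n l/\|P_n l\|_{L^2}$ (a unit vector in $V_n$) and $\tilde\alpha := \alpha/\|P_n l\|_{L^2}$, the previous bound says that the smooth symplectomorphism $\Phi_n(T):V_n\to V_n$ maps the ball $B_n$ of radius $R-\eps$ wholly into the cylinder $\{\zeta \in V_n : |\langle \tilde l,\zeta\rangle - \tilde\alpha| \le (r+C\eps)/(1-\eps)\}$. Since $r < R$, one can fix $\eps$ at the outset so that $(r+C\eps)/(1-\eps) < R-\eps$, and this flatly contradicts Gromov's Theorem~\ref{T:Gromov} applied inside $V_n$.

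The transfer sketched above is essentially bookkeeping once the approximation is in hand; the main obstacle is therefore the proof of Theorem~\ref{T:fad} itself. That result must furnish a \emph{single} finite-dimensional Hamiltonian system whose flow tracks the genuine NLS flow \emph{uniformly} across the bounded set $B(z_*,R)$ up to the final time $T$ --- and this at the $L^2$-critical level of regularity and in the face of the non-compact symmetry group (scaling, translations, and Galilei boosts) of \eqref{NLS}. Note that Dodson's Theorem~\ref{T:Dodson} alone is insufficient here: one needs uniform global spacetime bounds for the Fourier-truncated equations (presumably posed on a large torus), which is precisely the general methodology the paper sets out to develop.
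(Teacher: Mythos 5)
Your overall strategy (assume squeezing for the PDE, transfer it to a finite-dimensional Hamiltonian system, contradict Gromov) is the classical template, but the step on which everything hinges is not available. You invoke Theorem~\ref{T:fad} to produce a finite-dimensional flow $\Phi_n$ with
$\sup_{w\in B(z_*,R)\cap V_n}\|\Phi(T)w-\Phi_n(T)w\|_{L^2}<\eps$,
i.e.\ a \emph{norm} approximation, uniform over the ball. Theorem~\ref{T:fad} asserts nothing of the sort: it only gives convergence of the pairings $\langle p_*\ell, v_n(t)\rangle-\langle \ell,u_n(t)\rangle\to 0$ against fixed compactly supported $\ell$, i.e.\ approximation in the weak topology, and the approximating system lives on a torus $\T_n$ with initial data obtained from $u_n(0)$ by cutting off, pushing forward under the covering map, and projecting --- not on a subspace $V_n\subset L^2(\R^2)$ sharing the same initial datum. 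The paper is emphatic that norm approximation uniformly over a ball is impossible here: the nonlinear profile decomposition for the truncated equations contains profiles (concentrating at the truncation scale $M_n^{-1}$) that solve equations genuinely different from (NLS), which is why Theorem~\ref{thm:wc} and everything downstream is formulated only in the weak topology. So the inequality you write down cannot be supplied by the results you cite, and your contradiction never gets off the ground.

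There is a second, independent obstruction even if one retreats to the correct (weak) form of Theorem~\ref{T:fad}: to contradict Gromov you must know that a full ball in the finite-dimensional phase space $\mathcal H_n$ is squeezed, i.e.\ that \emph{every} datum in that ball arises (up to weak error) from some NLS solution with data in $B(z_*,R)$. Theorem~\ref{T:fad} gives no such surjectivity --- it only says that each NLS solution has \emph{some} finite-dimensional shadow. This is exactly the point the paper flags when it says that Theorem~\ref{T:fad} ``does not guarantee that all solutions to \eqref{approx system} appear as approximations to solutions to (NLS).'' The paper's actual proof therefore runs in the opposite direction and is not by contradiction: it applies Gromov directly in $\mathcal H_n$ to obtain witnesses $v_n$ (in the ball at $t=0$, outside the cylinder at $t=T$), uses Theorem~\ref{T:dad} to embed these into (NLS) up to weak errors, and then uses weak well-posedness of (NLS) (Theorem~\ref{BG-like}) to extract a single limiting solution $u$ that witnesses non-squeezing, after approximating $z_*$ and $l$ by compactly supported functions and checking that the relevant balls and cylinders match up to $O(\delta)$. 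To repair your argument you would essentially have to prove Theorem~\ref{T:dad} anyway, at which point the direct argument is both shorter and avoids the false norm-approximation claim.
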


The initial stimulus to consider this problem came from the authors' attendance of the lecture \cite{Mendelson:talk} by Mendelson.  All non-squeezing results for nonlinear PDE up to that time (and indeed until the work presented here) were for problems posed on tori; see \cite{Bourg:approx,Bourg:aspects,CKSTT:squeeze,HongKwon,Kuksin,Mendelson,Roumegoux}.  At the end of this talk the question was raised whether this represented an intrinsic limitation --- certainly the torus assumption permeates every aspect of these prior works --- or whether one might also expect non-squeezing to hold in infinite-volume settings.  Our result demonstrates the second alternative.

In his paper \cite{Kuksin}, Kuksin argues that non-squeezing gives us insight into what kind of weakly turbulent behavior is possible for Hamiltonian PDE.  Specifically, it precludes the possibility of all the energy evacuating the low or middle frequencies uniformly for a ball of initial data.  We contend that Gromov-type non-squeezing is equally informative about the nature of scattering.  Indeed, the intuitive notion of scattering is precisely that of mass/energy evacuating any compact set and moving off to infinity in physical (as opposed to Fourier) space.  To view the non-squeezing theorem as a constraint on the nature of scattering, one must then prove it for a system that exhibits scattering, which in turn necessitates consideration of a problem in infinite volume.

Several further considerations informed our exact choice of model in pursuing the extension of non-squeezing to an infinite-volume setting.  First, the Schr\"odinger equation does not enjoy finite speed of propagation and correspondingly, even the local dynamics is affected by the global geometry.  Second, in this model solutions scatter, that is, they asymptotically approach linear solutions as $t\to\infty$; more precisely, they asymptotically approach \emph{out-going} waves.  This leads to the suggestion that squeezing may occur --- indeed, all of phase-space is being compressed into a proper subset, namely the out-going waves.  Let us give a simple example that exemplifies this attitude; here we will temporarily adopt the Lax--Phillips paradigm (cf. \cite{LaxPhillips}) for scattering, in which there is a symplectic transformation which converts the nonlinear dynamics into simply that of translation:  The symplectic (indeed unitary) flow
$$
\phi:\R\times L^2(\R) \to L^2(\R) \qtq{with} \phi(t,f)(x) = f(x-t)
$$
preserves the unit ball, as well as the set
$$
A:=\{ f\in L^2(\R) : \textstyle\int_0^\infty |f(x)|^2\,dx \geq \int^{-\infty}_0 |f(x)|^2\,dx \}; 
$$
moreover, for all $f\in L^2(\R)$ there exists $T(f)$ so that $\phi(t,f)\in A$ for all $t\geq T(f)$.  Thus, informally speaking, this flow compresses the unit ball into half of itself (as $t\to\infty)$; this is impossible in finite dimensions due to volume conservation.   For another example of how infinite dimensions allows for behaviors commensurate with the colloquial meaning of `squeezing', that are not possible in finite dimensions, see \cite[\S4]{AbbMajer}.

A third reason for considering the cubic NLS in two dimensions is criticality; specifically, the symplectic phase-space on which this equation is Hamiltonian is $L^2$, which is also a scale-invariant space for the equation.  In particular, among all $\dot H^s(\R^2)$ spaces, $s\in\R$, this is the largest in which the equation is even locally well-posed (see \cite{ChCoTao}).

We are not the first to consider non-squeezing at the scaling-critical regularity.  This major advance was made in Mendelson's work \cite{Mendelson} on the cubic nonlinear Klein--Gordon equation on the three-torus.  Currently, this equation is only known to be \emph{locally} well-posed in the critical space and ultimately, Mendelson's results are conditional, unless one restricts to very short times $T$.  Note that she assumes not only global well-posedness of her equation (in a form directly analogous to Theorem~\ref{T:Dodson} above), but also well-posedness of frequency-truncated forms of the equation.

By elementary arguments using scaling, finite speed of propagation, and partitions of unity, one can show that in Medelson's setting, the assumptions made on the frequency-localized versions of the equation actually imply well-posedness of the full equation.  Naturally, one would like to proceed in the opposite manner and obtain a result that is contingent only on well-posedness of the full equation.   This is difficult.  In our setting, for example, Dodson's proof of Theorem~\ref{T:Dodson} does not apply to frequency-localized versions of the equation --- the localization ruins the Morawetz monotonicity formula at its core.  Moreover, the localized versions cannot be handled as perturbations of the full equation.  Nonetheless, the methods introduced in this paper can be used to verify Theorem~1.2 in \cite{Mendelson} assuming only her hypothesis on the original equation; however, our argument does \emph{not} proceed via this path.

A proper explanation of how the criticality of the problem makes it more difficult to prove non-squeezing will have to wait until we delve deeper into the details.  The simplest aspect of this is that scaling --- indeed, any non-compact symmetry --- ruins compactness of the nonlinear effects, which, in turn, makes it more difficult to approximate the dynamics by a finite-dimensional system.

Our fourth and final reason for selecting the cubic NLS in two dimensions is its popularity both mathematically and as a model in physics (optics in particular). Our arguments apply also to the mass-critical problem in other dimensions; however, we felt that presenting such generality would muddy the exposition.  In dimensions three and higher, this leads to non-squeezing for some low-regularity symplectomorphisms; indeed, the flow map for the mass-critical NLS is merely $C^{1,4/d}(L^2)$ for $d\geq 5$, $C^{1,1}(L^2)$ for $d=4$, and $C^{2,1/3}(L^2)$ for $d=3$. 

\subsection{Prior Work}
Let us now describe prior work on non-squeezing for Hamiltonian PDE in more detail, which will also permit us to introduce some of the fundamental ideas associated with this problem.  The subject begins with the work \cite{Kuksin} of Kuksin.  His approach was to develop a variant of Gromov's theorem in Hilbert space and then verify the hypotheses of his theorem for several PDE examples.  Ultimately, his method is based on finite-dimensional approximation; specifically, he extends the symplectic capacity of Hofer--Zehnder to the Hilbert space setting via finite-dimensional approximation and then shows that this notion of capacity is preserved, provided the nonlinear component of the dynamics is sufficiently weak.

The specific assumptions on the dynamics imposed by Kuksin to implement this strategy can be summarized as follows:  The underlying linear dynamics should be unitary on the symplectic Hilbert space $Z$ and have discrete spectrum.  The nonlinearity should be compact in the following strong sense: it can be extended continuously to a mapping $Z_-\to Z_+$ where $Z_\pm$ are Hilbert spaces adapted to an eigenbasis of the linear part and admitting compact embeddings $Z_+\hookrightarrow Z\hookrightarrow Z_-$.

Kuskin's work inspired several other authors who extended considerably the list of models to which his abstract result could be applied; see \cite{Bourg:aspects,Roumegoux} for results of this type.

In Kuksin's paper (and in all the subsequent work) the linear functional $\ell$ appearing in Theorem~\ref{thm:nsqz} is restricted to be an eigenvector of the underlying linear dynamics.  Upgrading these prior works to the generality presented in our theorem is relatively easy.  On the other hand, since the Laplacian has purely continuous spectrum on $\R^2$, there is no analogue of their reduced generality in our setting.  In fact, this spectral property means that the unitary group $e^{it\Delta}$ has no invariant subspaces of finite dimension (excepting $\{0\}$); this raises issues even for approximating the \emph{linear} flow by a finite-dimensional system and underlines another way in which our proof must diverge from its predecessors. 

It is perhaps tempting to imagine that one may define the Hofer--Zehnder capacity directly on subsets of the Hilbert space.  This seems na\"ive.  Recall that this capacity is defined by the maximal total variation of a Hamiltonian function that does not produce an orbit with period $\leq 1$.  The key analytical basis underlying this construction is the fact that in finite dimensions, non-trivial Hamiltonian flows necessarily have closed orbits.  No such result holds in infinite dimensions; indeed, a quadratic Hamiltonian will admit periodic orbits if and only if it is the quadratic form associated to an operator with discrete spectrum.

Promisingly, Abbondandolo and Majer \cite{AbbMajer} have recently constructed a capacity directly on \emph{convex} sets in Hilbert space (without finite-dimensional approximation).  They employ the dual action principle, which is peculiar to the convex case and yields the existence of periodic orbits in the finite-dimensional setting --- they correspond to critical points of the dual action functional.  As noted, periodic orbits need not exist in infinite dimensions; instead, these authors define the capacity directly from the variational problem and prove symplectic invariance by studying Palais--Smale sequences associated to this problem.  There is no reason to believe that nonlinear PDE carry balls of initial data into convex sets, except on very short time intervals.  In this sense, the result of \cite{AbbMajer} is complementary to the PDE development outlined below.

Bourgain \cite{Bourg:approx} was the first to prove non-squeezing for a model that does not fall under Kuksin's framework, considering the cubic NLS
\begin{align}\label{B1dNLS}
i\partial_t u = -\Delta u + |u|^2 u \quad\text{posed on $\R/\Z$,}
\end{align}
that is, on the one-dimensional torus.  Note that the complete integrability of this equation plays no role in his arguments, which apply also in the presence of a smooth coupling coefficient $b(t,x)$ in front of the nonlinearity.

The presence of the non-compact Galilei symmetry suffices to see that Kuksin's compactness hypothesis fails for \eqref{B1dNLS}.  Bourgain proves non-squeezing for this model by showing directly that it can be approximated by a finite-dimensional system, specifically, the cubic NLS where a sharp Fourier cutoff $\Pi_{\leq N}$ is placed on the nonlinearity (and the initial data):
\begin{align}\label{B1dNLS'}
i\partial_t u = -\Delta u + \Pi_{\leq N}\bigl( |u|^2 u \bigr).
\end{align}
The key quantitative estimate used by Bourgain is the following:  For fixed $M,N_0,t$, and $\eps$, all positive, there exists $N\gg N_0$ so that the following holds:  If $u$ and $v$ denote the solutions to \eqref{B1dNLS} and \eqref{B1dNLS'}, respectively, with initial data $\phi=\Pi_{\leq N}\phi$ obeying $\|\phi\|_{L^2}\leq M$, then
\begin{align}\label{Bapprox}
\| \hat u(t,\xi) - \hat v(t,\xi) \|_{\ell^2(|\xi|\leq N_0)} \leq \eps.
\end{align}

Observe that this estimate immediately yields non-squeezing: If squeezing occurred for the full dynamics \eqref{B1dNLS}, then \eqref{Bapprox} guarantees that it also occurs for the finite-dimensional dynamics \eqref{B1dNLS'}, which is forbidden by Theorem~\ref{T:Gromov}.

As $N_0$ in \eqref{Bapprox} is fixed but arbitrary, Bourgain's argument is inherently one of showing that the full flow \eqref{B1dNLS} can be approximated in the weak topology by a sequence of finite-dimensional models of the form \eqref{B1dNLS'}.  Indeed, on the torus, a bounded sequence in $L^2$ converges weakly if and only if their  Fourier transforms converge pointwise.  

The proof of the key estimate \eqref{Bapprox} can be reduced (by induction) to the case where $t$ is small enough that existence on the interval $[0,t]$ follows from a single application of contraction mapping.  Note that the restriction on $t$ depends only on $M$, which is a hall-mark of the subcriticality of the problem.  In this reduced regime, the problem is transformed into a careful analysis of the contraction mapping step; in particular, Bourgain shows that low-frequencies are little affected by much higher-frequencies because such interactions are non-resonant.

Note that the method just outlined does not carry over to the scaling-critical case, because scale invariance forbids the possibility of a local existence time \emph{universal} across an entire ball of initial data (except in the small-data regime).

Broadly speaking, the paper \cite{CKSTT:squeeze} implemented Bourgain's strategy for \eqref{B1dNLS} in the setting of the Korteweg--de Vries equation on the circle; however, in order to do this, the authors had to surmount several major difficulties.

The symplectic Hilbert space on which KdV is Hamiltonian is the space of mean-zero functions subject to the $H^{-1/2}(\R/\Z)$ norm.  While the KdV equation is subcritical in this space, it does represent the end-point regularity for strong notions of well-posedness.  Specifically, the data to solution map is real-analytic on $H^s(\R/\Z)$ for $s\geq-\frac12$ and not even uniformly continuous on bounded sets for $s<-\frac12$; see \cite{ChCoTao,CKSTT:JAMS,CKSTT:JFA04} and the references therein.  On the other hand, well-posedness (with mere continuous dependence) holds down to $H^{-1}$; see \cite{KappTop}.

It is shown in \cite{CKSTT:squeeze} that the introduction of a sharp Fourier cutoff (as in Bourgain's work) does not work; the requisite approximation result fails.  This problem is remedied by using a standard (smooth) Littlewood--Paley projector $P_{\leq N}$ instead; specifically, the Hamiltonian is modified from its usual form
$$
\int \tfrac12 u_x^2 + u^3 \,dx \qtq{to} \int \tfrac12 u_x^2 + (P_{\leq N} u)^3 \,dx.
$$
The latter clearly yields a finite-dimensional Hamiltonian system on the space of functions in the range of $P_{\leq N}$.  The problem is to verify that solutions to the full system can be adequately approximated by this system; the precise formulation is a close analogue of \eqref{Bapprox}.  The authors of \cite{CKSTT:squeeze} do this by revisiting and further strengthening their proof of well-posedness in $H^{-1/2}$; this in turn involves passing to mKdV via the Miura map and upgrading previous trilinear estimates.

More recently, Hong and Kwon, \cite{HongKwon}, have simplified the proof of non-squeezing for KdV on the torus and further shown that non-squeezing holds for a certain system of coupled KdV equations.  The principal innovation here is the discovery of a simple normal form transformation that can be used (in place of the Miura map) to verify finite-dimensional approximation of the same type as before.

Our earlier discussion of Mendelson's work emphasized her conditional result \cite[Theorem~1.2]{Mendelson}.  This paper contains a second principal result concerning non-squeezing (see \cite[Theorem~1.1]{Mendelson}) that is unconditional, but only applies to very short time intervals.  The proof of this unconditional result is rather more complicated and introduces the novel idea of tracking randomized initial data, for which one has more leverage in the well-posedness problem.  On the other hand, we contend that the precise formulation of \cite[Theorem~1.1]{Mendelson} permits a simpler proof, via the expedient of approximating the nonlinear flow by the linear flow, rather than a frequency-localized nonlinear problem.  (Non-squeezing for the linear Klein--Gordon flow is elementary.)  The basic Klein--Gordon estimate for the approach we are advocating is the following:
$$
\bigl\| \bigl(u(t)-u^{\text{lin}}(t),u^{ }_t(t) - u^{\text{lin}}_t(t)\bigr) \bigr\|_{L^\infty_t H^1_x\times L^2_x([0,T]\times \T^3)}
	\lesssim  T \bigl\| \bigl(u(0),u_t(0)\bigr) \bigr\|_{H^1_x\times L^2_x}^3,
$$
which holds for $T$ sufficiently small depending on the $H^1_x\times L^2_x$ norm of the initial data.  Note that the energy space $H^1_x\times L^2_x$ does not coincide with the symplectic space in the Klein--Gordon setting; however, \cite[Theorem~1.1]{Mendelson} considers only frequency-localized data. 

This completes our discussion of prior work.  Given the wealth of work on the torus, it is natural to ask if non-squeezing holds for the cubic NLS on the two-torus $\R^2/\Z^2$, as opposed to $\R^2$, the problem studied here.  In actuality, this question is premature --- it is not known if the cubic NLS is well-posed on $L^2(\R^2/\Z^2)$.  Indeed, even the small data problem remains open.  In \cite{Kishimoto}, Kishimoto shows that if the data to solution map does exist, it cannot be $C^3$ even at the origin.

\subsection{Finite-dimensional approximation}\label{SS:approx}
Our finite-dimensional approximating system will be a frequency-truncated version of \eqref{NLS} posed on a large torus $\R^2/ L\Z^2$.  However, we must delay the formal statement  of this result a little longer, because the precise formulation is dictated by the need to circumvent a key difficulty (stemming from the criticality of our problem), whose nature and resolution we elaborate first.

As in the analysis of numerical schemes, there are two key components to an approximation result: stability and convergence. The former is the assertion that the approximate problem (or numerical method) is stable, namely, that the (putative) solutions it produces obey bounds.  These bounds must be \emph{uniform} as the supposed quality of the scheme improves.  The second step is to show that these well behaved approximate solutions do actually converge to a true solution.

For us, stability is a major issue.  This stems from the criticality of the problem and did not appear in previous work (excepting \cite{Mendelson} where it was merely assumed).  The difficulty appears already if one merely introduces a Fourier cutoff in the cubic nonlinearity.  In fact, for the proof it is convenient to first overcome this particular hurdle in the infinite-volume setting, before even starting to consider the frequency-truncated problem on the torus.  In Theorem~\ref{thm:scapds}, we therefore consider the problem
\begin{align}\label{E:PdNLS}
i\partial_t u = -\Delta u + \mathcal P \bigl( |\mathcal P u|^2 \mathcal P u\bigr) 
\end{align}
for a suitable compactly supported Fourier multiplier with symbol $m(\xi)$ and prove \emph{uniform} space-time bounds that depend only on the mass of the initial data.  Note that this equation is Hamiltonian with energy functional
$$
H(u) = \int \tfrac12 |\nabla u|^2 + \tfrac14 |\mathcal P u|^4\,dx.
$$
Ultimately, we will move the frequency cutoff to higher and higher frequencies by using multipliers $\mathcal P_{M_n}$ with rescaled symbols $m(\xi/M_n)$ and sending $M_n\to\infty$.  However, as our problem is scale-invariant, the stability problem for these multipliers is equivalent to the one for \eqref{E:PdNLS}.  There is one vestige of this rescaling though, namely, we must study \eqref{E:PdNLS} \emph{globally} in time.  Note that global existence is trivial for \eqref{E:PdNLS}.  What we need, however, are uniform global space-time bounds.

Because we have an adequate perturbation theory for \eqref{E:PdNLS}, see Lemma~\ref{lm:stab}, a proof of global space-time bounds for \eqref{E:PdNLS} would imply Theorem~\ref{T:Dodson}.  Indeed, \eqref{E:PdNLS} becomes (NLS) in the low-frequency limit.   The converse implication is untrue --- the multiplier has a meaningful effect on solutions that have their Fourier support concentrated in the region where $\mathcal P$ is transitioning from the identity to zero.  Remember that to prove non-squeezing, we must approximate \emph{all} solutions (in some ball) \emph{simultaneously}.  We do not know how to prove such bounds for \eqref{E:PdNLS} when $\mathcal P$ is a normal Littlewood--Paley projector, even in the radial case; this choice breaks the Morawetz monotonicity formula at the heart of the proof of Theorem~\ref{T:Dodson}.

In Section~\ref{S:4} we present a robust scheme (based on an inductive argument) to show that \emph{suitably} frequency-localized versions of an equation inherit space-time bounds from the original equation.  The key such stability result in the case of \eqref{E:PdNLS} is Theorem~\ref{thm:scapds}.

The first component of Theorem~\ref{thm:scapds} is the choice of the symbol $m(\xi)$ of $\mathcal P$.  It will transition very very slowly from its value one near the origin to vanishing near infinity.  In fact, the rate required is dictated by the maximal mass of the initial data under consideration via the constant $C(M)$ appearing in Dodson's Theorem.  The rationale for this is that solutions which remain concentrated (on the Fourier side) will experience the multiplier as a single number.  This is helpful since uniform global bounds for (NLS) with modified coupling constant $\alpha^4\in[0,1]$ do follow directly from Theorem~\ref{T:Dodson}.  Ultimately, we will convert this heuristic into a proof of space-time bounds for initial data that is well localized on the Fourier side; see Proposition~\ref{prop:onebubble}.

The second component of Theorem~\ref{thm:scapds} is an induction argument, reminiscent of that introduced in \cite{Bourg:JAMS}, to treat the case of Fourier delocalized initial data; see Proposition~\ref{prop:twobubble}.  For well-posedness problems, the original argument of Bourgain has mostly been displaced by the methodology of Kenig and Merle, \cite{KenigMerle:H1}.  Recall that the latter proceeds as follows: one first shows the existence of a minimal blowup solution (which necessarily will have good compactness properties) via the concentration compactness techniques of \cite{BG99,Keraani} and subsequently, one shows that such almost periodic solutions are incompatible with known monotonicity formulas and/or conservation laws.  This is the style of argument used by Dodson in \cite{Dodson}.  As we explain in Section~\ref{S:4}, concentration compactness arguments do not aid in the proof of Theorem~\ref{thm:scapds}; moreover, we will make no use of any monotonicity formulas for \eqref{E:PdNLS} --- none are known --- nor any of the conservation laws associated with it.  Even conservation of mass plays no role, despite being the quantity on which we induct.

Let us now make a few definitions as a last prerequisite to the formulation of our approximation result.  The approximating sequence of systems are of the following form:  Given a fixed parameter $D$ and sequences of parameters $L_n$ and $M_n$ we consider
\begin{align}\label{approx system}
i\partial_t u = -\Delta u + \mathcal P_{M_n}^{L_n} \bigl( |\mathcal P_{M_n}^{L_n} u|^2 \mathcal P_{M_n}^{L_n} u\bigr) 
\end{align}
posed on the finite-dimensional Hilbert space
$$
\mathcal H_n:=\{f\in L^2(\T_n):\, P_{>2DM_n}^{L_n} f =0 \} \qtq{with} \T_n := \R^2/L_n \Z^2.
$$
Here $\mathcal P_{M_n}^{L_n}$ denotes the Fourier multiplier on $\T_n$ with symbol $m_D(\xi/M_n)$ defined explicitly in \eqref{m_D definition} and $P_{>2DM_n}^{L_n}$ denotes the usual Littlewood--Paley projection operator on the torus $\T_n$.

To general $\ell\in L^2(\R^2)$, which will be regarded as a linear functional on $L^2(\R^2)$, we will need to associate a linear functional on $\mathcal H_n \subseteq L^2(\T_n)$.  To do this we assume  that $\ell$ has compact support and then define $p_*\ell \in L^2(\T_n)$ via the covering map $p:\R^2\to\T_n$; see \eqref{covering shit}.  As $L_n\to\infty$ and compactly supported functions are dense in $L^2(\R^2)$, the reader should not take this formal necessity too seriously.

Our two approximation results are as follows:

\begin{thm}[Finite-dimensional approximation]\label{T:fad}  Given $M>0$, $T>0$, and $M_n\to\infty$, there exists $D>0$ and $L_n\to\infty$ so that the following holds:  Let $u_n$ be any sequence of solutions to \emph{(NLS)} obeying $\|u_n(0)\|_{L^2} \leq M$; then there exist solutions $v_n$ to  \eqref{approx system} with $v_n(0)\in\mathcal H_n$ satisfying $\|v_n(0)\|_{L^2} \leq M$ and
\begin{equation}\label{E:T:1.5}
\bigl| \langle p_*\ell, v_n(t) \rangle_{L^2(\T_n)} - \langle \ell, u_n(t) \rangle_{L^2(\R^2)}\bigr| \longrightarrow 0 \quad\text{as $n\to\infty$}
\end{equation}
for all $-T\leq t\leq T$ and all $\ell\in L^2(\R^2)$ of compact support.
\end{thm}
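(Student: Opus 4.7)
The plan is to interpose between \eqref{NLS} on $\R^2$ and the finite-dimensional system \eqref{approx system} on $\T_n$ an intermediate frequency-truncated problem on $\R^2$, namely \eqref{E:PdNLS} with multiplier $\mathcal P_{M_n}$. First, one chooses $D$ large enough (depending only on $M$ and $T$) so that Theorem~\ref{thm:scapds} furnishes uniform global space-time bounds for all solutions of \eqref{E:PdNLS} with initial data of mass at most $M$. Without such uniform bounds, no perturbation argument can hope to survive over $[-T,T]$ at the mass scale $M$; this is the central role played by Theorem~\ref{thm:scapds}.

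Next, for each $n$ let $w_n$ denote the solution on $\R^2$ to \eqref{E:PdNLS} (with multiplier $\mathcal P_{M_n}$) and initial data $w_n(0)=u_n(0)$. I claim that, for every compactly supported $\ell\in L^2(\R^2)$,
\begin{equation*}
\sup_{|t|\leq T}\bigl|\langle \ell,w_n(t)\rangle - \langle \ell,u_n(t)\rangle\bigr| \longrightarrow 0.
\end{equation*}
The argument is a perturbation one: $u_n$ solves \eqref{E:PdNLS} up to the forcing term $|u_n|^2u_n-\mathcal P_{M_n}(|\mathcal P_{M_n}u_n|^2\mathcal P_{M_n}u_n)$, which, by the uniform $L^4_{t,x}$ bound of Theorem~\ref{T:Dodson} together with the fact that $\mathcal P_{M_n}$ converges to the identity on any compact set of frequencies as $M_n\to\infty$, is small in a suitable dual Strichartz norm. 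Invoking Lemma~\ref{lm:stab} and the uniform bounds for $w_n$ from Theorem~\ref{thm:scapds} then yields smallness of $u_n-w_n$ in Strichartz norms, which in turn gives the stated weak convergence against any compactly supported $\ell$.

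The final step is to transfer $w_n$ from $\R^2$ to the torus $\T_n$. One spatially truncates $w_n(0)$ to a ball $B(0,R_n)$ with $R_n\ll L_n$, pushes down via the covering map to produce $\tilde w_n(0)\in L^2(\T_n)$, applies $P_{\le 2DM_n}^{L_n}$ to ensure membership in $\mathcal H_n$, and then solves \eqref{approx system} with this initial datum to obtain $v_n$. Two ingredients are needed: (i) the multiplier $\mathcal P_{M_n}^{L_n}$ on $\T_n$ approximates $\mathcal P_{M_n}$ on $\R^2$ as $L_n\to\infty$, since discrete Fourier sampling limits to the Fourier integral; and (ii) the Schr\"odinger propagator on $\T_n$ can be compared to the one on $\R^2$ on compact sets as $L_n\to\infty$. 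Given these, a further application of Lemma~\ref{lm:stab} (now on $\T_n$, using the torus analogue of Theorem~\ref{thm:scapds}) identifies $v_n$ with the pushforward of a spatially truncated $w_n$ up to an error going to zero on compact sets, yielding \eqref{E:T:1.5}.

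The main obstacle is the third step: the Schr\"odinger equation has no finite speed of propagation, so to compare dynamics on $\R^2$ and on the torus $\T_n$ for times up to $T$, one must control the portion of $w_n$ whose mass reaches distance $\gtrsim L_n$ from the origin. This is handled by combining spatial cutoffs with dispersive and Strichartz estimates together with the uniform space-time bounds already in hand: these quantify how mass of $w_n$ escapes compact sets in bounded time, allowing one to choose $L_n\to\infty$ fast enough that the torus and Euclidean flows agree on the support of $\ell$. Diagonalizing over the various rates --- the one at which $\mathcal P_{M_n}$ tends to the identity, the one at which $\mathcal P_{M_n}^{L_n}$ approximates $\mathcal P_{M_n}$, and the one at which $w_n$ remains spatially localized on $[-T,T]$ --- one extracts admissible $D$ and $L_n\to\infty$ for which \eqref{E:T:1.5} holds.
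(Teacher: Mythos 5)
Your high-level architecture (interpose the frequency-truncated problem $\mathcal P_{M_n}\mathrm{NLS}$ on $\R^2$, then transfer to the torus) mirrors the paper's three pillars, but the second step contains a genuine gap that the authors go to considerable lengths to avoid. You claim that $u_n$ solves $\mathcal P_{M_n}\mathrm{NLS}$ up to an error that is small in the dual Strichartz norm because ``$\mathcal P_{M_n}$ converges to the identity on any compact set of frequencies,'' and then invoke Lemma~\ref{lm:stab} to conclude that $u_n-w_n$ is small \emph{in norm}. This fails: the theorem demands uniformity over the entire mass-$M$ ball, and the sequence $u_n(0)$ is arbitrary within that ball, so nothing confines its Fourier support to a compact set independent of $n$. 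Take $u_n(0)=M_n\phi(M_nx)$ for a fixed nonzero $\phi$; then $\widehat{u_n(0)}$ lives at frequency scale $M_n$, exactly where $m_D(\xi/M_n)$ is transitioning from one to zero, so $\|(1-\mathcal P_{M_n})u_n(0)\|_{L^2}$ is bounded below and the forcing term $|u_n|^2u_n-\mathcal P_{M_n}F(\mathcal P_{M_n}u_n)$ is $O(1)$, not $o(1)$, in $N([-T,T])$. The nonlinear evolutions of $u_n$ and $w_n$ then genuinely diverge in norm. This is precisely the point the introduction emphasizes: ``the multiplier has a meaningful effect on solutions that have their Fourier support concentrated in the region where $\mathcal P$ is transitioning,'' and ``the localized versions cannot be handled as perturbations of the full equation''; moreover, ``no such approximation is possible in norm.''

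The conclusion of the theorem survives such examples only because it is phrased against a \emph{fixed} compactly supported $\ell$, i.e., in the weak topology, where the offending high-frequency (and otherwise non-conformant) pieces converge weakly to zero. Capturing this requires the profile-decomposition machinery of Sections~\ref{S:5} and~\ref{S:6}: one decomposes the data, observes that profiles living at scales comparable to or beyond $M_n$ solve different limiting equations (or the linear equation) but contribute nothing to the weak limit, while the single profile at unit scale converges to a genuine NLS solution. That is the content of Theorems~\ref{thm:wc} and~\ref{BG-like}, and the paper's actual proof of Theorem~\ref{T:fad} is then a short deduction: define $v_n(0)$ by cutting off, pushing forward, and projecting $u_n(0)$; pass to a weakly convergent subsequence; apply Theorem~\ref{T:dad} to produce NLS solutions $\tilde u_n$ shadowing $v_n$; and use Theorem~\ref{BG-like} twice to identify the weak limits of $u_n(t_0)$ and $\tilde u_n(t_0)$ with the same $u_\infty(t_0)$. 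Your third step (the torus transfer) is broadly in the spirit of Section~\ref{S:8}, though it too ultimately rests on norm-closeness of $w_n$ to the torus solution, which the paper achieves via the pigeonholed cutoffs and the mass-motion Lemma~\ref{lm:sm}; but the decisive missing idea is the retreat to the weak topology in place of the norm perturbation argument.
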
  

We stated this theorem for sequences of solutions $u_n$.  A completely equivalent assertion is that the approximation of solutions to (NLS) by sequences of solutions to \eqref{approx system} holds uniformly across the $M$-ball of initial data, which is to say, convergence in \eqref{E:T:1.5} is uniform on this ball. Without the assertion of uniformity in the choice of initial data obeying $\|u(0)\|_{L^2}\leq M$, this result would be relatively easy to prove.  As we have emphasized previously, the substance here is to be able to simultaneously simulate a non-compact ensemble of trajectories.

Theorem~\ref{T:fad} does not immediately yield Theorem~\ref{thm:nsqz}, in part, because it does not guarantee that all solutions to \eqref{approx system} appear as approximations to solutions to (NLS).  Our proof of the non-squeezing theorem relies precisely on this latter type of approximation; specifically, we show that the dynamics of all solutions to \eqref{approx system} can be approximated (uniformly on balls) by that of solutions to (NLS):

\begin{thm}[Finite-dimensional embedding]\label{T:dad}  Given $M>0$ and $M_n\to\infty$, there exists $D>0$ and $L_n\to\infty$ so that the following holds: Given any sequence of solutions $v_n$ to \eqref{approx system} with initial data $v_n(0)\in\mathcal H_n$ satisfying $\|v_n(0)\|_{L^2}\leq M$, there exist solutions $u_n$ to \textup{(NLS)} with initial data satisfying $\|u_n(0)\|_{L^2}\leq M$ that also obey
$$
\bigl| \langle p_*\ell, v_n(t) \rangle - \langle \ell, u_n(t) \rangle\bigr| \longrightarrow 0 \quad\text{as $n\to\infty$}
$$
for all $t\in\R$ and all $\ell\in L^2(\R^2)$ of compact support.
\end{thm}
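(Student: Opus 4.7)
The plan is to mirror the argument for Theorem~\ref{T:fad} in the opposite direction: given the torus solutions $v_n$, produce an $\R^2$-initial datum $u_n(0)$ by transplanting $v_n(0)$ onto a fundamental domain, and then compare the resulting (NLS) solution $u_n$ with $v_n$ through an intermediate solution $w_n$ of the $\R^2$-truncated equation \eqref{E:PdNLS}.

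\emph{Construction of $u_n$.}  Using the translation symmetry of \eqref{approx system} on $\T_n$, shift $v_n$ (after extraction of a subsequence and a pigeonhole argument on spatial translates) so that the mass of $v_n(0)$ near the boundary of the fundamental square $Q_n := [-L_n/2, L_n/2]^2$ tends to zero.  Via the covering map $p:\R^2 \to \T_n$, regard $v_n(0)|_{Q_n}$ as a function supported on $Q_n \subset \R^2$, and let $u_n(0)$ be this function; then $\|u_n(0)\|_{L^2(\R^2)} \leq \|v_n(0)\|_{L^2(\T_n)} \leq M$.  Let $u_n$ solve \eqref{NLS} with this datum and let $w_n$ solve \eqref{E:PdNLS} on $\R^2$, with multiplier $\mathcal P_{M_n}$, starting from the same $u_n(0)$.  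By Theorem~\ref{T:Dodson}, Theorem~\ref{thm:scapds}, and the analogue of Theorem~\ref{thm:scapds} on $\T_n$ (obtainable by the inductive scheme of Section~\ref{S:4}), the three families $u_n$, $w_n$, and $v_n$ enjoy global $L^4_{t,x}$ bounds depending only on $M$.

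\emph{Comparison in two steps.}  The first comparison, $u_n$ versus $w_n$, is entirely on $\R^2$.  Because the two equations share the same initial datum and $\mathcal P_{M_n} \to \mathrm{Id}$ as $M_n\to\infty$, a direct application of the stability lemma (Lemma~\ref{lm:stab}), combined with the fact that $u_n(0)$ varies in a bounded $L^2$-set, yields $\|u_n - w_n\|_{L^4_{t,x}(\R \times \R^2)} \to 0$, and hence $\langle \ell, u_n(t) - w_n(t)\rangle \to 0$ for every $t \in \R$ and $\ell \in L^2(\R^2)$.  The second comparison, $w_n$ on $\R^2$ versus $v_n$ on $\T_n$, proceeds by lifting $v_n$ to an $L_n$-periodic function on $\R^2$ and expanding the difference via Duhamel.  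The key ingredients are (i) Poisson-summation bounds identifying the kernels of $\mathcal P_{M_n}$ and $\mathcal P_{M_n}^{L_n}$ when tested against functions with support in a fixed ball, and (ii) propagation estimates for the truncated Schr\"odinger group showing that mass originating within a distance $o(L_n)$ of the boundary of $Q_n$ cannot contribute to $\langle p_*\ell, v_n(t)\rangle$ before $L_n$ becomes large enough (the threshold depending only on $t$, $M$, and $\supp \ell$).  Combined with the first comparison, this gives the claim on any fixed time interval; the extension to all $t \in \R$ then follows from the uniform global $L^4_{t,x}$ bounds and a standard scattering argument.

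\emph{Main obstacle.}  The crucial difficulty will be executing the second comparison \emph{uniformly} over the non-compact family $\{v_n(0) : \|v_n(0)\|_{L^2(\T_n)} \leq M\}$: even after translation, the mass of $v_n(0)$ may be split across many well-separated bubbles on $\T_n$, each too far from the boundary to be handled by a single shift.  To treat such data I would perform a linear and nonlinear profile decomposition of $v_n(0)$ on $\T_n$ (adapting the concentration-compactness machinery used in the proof of Theorem~\ref{thm:scapds}), handle each profile by translating it near the origin of $Q_n$ and invoking the local $w_n$-comparison, and then reassemble via the stability theory.  The reassembly requires that nonlinear interactions between well-separated profiles decouple as $L_n \to \infty$; this decoupling is a now-standard consequence of Strichartz estimates once the uniform space-time bounds of Theorem~\ref{thm:scapds} are in hand.
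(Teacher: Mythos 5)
Your overall architecture (transplant $v_n(0)$ to a fundamental domain via a pigeonhole cut, pass through an intermediate solution $w_n$ of the truncated equation on $\R^2$, compare $w_n$ with $v_n$ in norm and $w_n$ with $u_n$ separately) is the same as the paper's. But your first comparison step contains a fatal error. You claim that since $\mathcal P_{M_n}\to\mathrm{Id}$, Lemma~\ref{lm:stab} gives $\|u_n-w_n\|_{L^4_{t,x}(\R\times\R^2)}\to 0$ uniformly over the bounded set of data. This is false, and it is precisely the obstruction the paper is organized around: the truncated equation is \emph{not} a perturbation of (NLS) uniformly on $L^2$-balls. If $v_n(0)$ (hence $u_n(0)=w_n(0)$) is a bubble concentrated at frequency $\sim M_n$ --- which is permitted, since $\mathcal H_n$ reaches up to frequency $2DM_n$ --- then by scaling $w_n$ is a rescaled solution of $\pd\mbox{NLS}$ while $u_n$ is a rescaled solution of (NLS), and these differ by an amount of order one in $L^4_{t,x}$; the Duhamel error $F(u_n)-\pmn F(\pmn u_n)$ is small for each \emph{fixed} finite-mass solution but not uniformly over the ball. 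The correct assertion, and the only one that can hold, is that $u_n(t)-w_n(t)\rightharpoonup 0$ \emph{weakly}; proving it requires showing that both sequences converge weakly to the same (NLS) solution, which is the content of Theorem~\ref{BG-like} for $u_n$ and of Theorem~\ref{thm:wc} for $w_n$. The latter rests on the full nonlinear profile decomposition for $\pd\mbox{NLS}$ (Theorem~\ref{T:npd}), in which the high-frequency and exotic-scale profiles solve equations other than (NLS) (linear Schr\"odinger or $\nalf$) but escape to zero weakly. Your proposal has no substitute for this machinery.

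Two secondary points. First, your "main obstacle" paragraph misplaces the concentration-compactness: no translation or profile decomposition of $v_n(0)$ on the torus is needed, because for $L_n$ huge the pigeonhole argument always produces a wide, low-mass strip in $[L_n/4,L_n/2]$ along which to cut, regardless of how the mass is distributed into bubbles; the profile decomposition that is genuinely needed lives on $\R^2$ and serves the weak-topology comparison just described. Second, the uniform space-time bounds for the torus system are not obtained by rerunning the Section~\ref{S:4} induction on $\T_n$ (the bilinear restriction and Morawetz inputs are unavailable there, and mass-critical Strichartz estimates fail on tori); they are transferred from $\R^2$ via Theorem~\ref{thm:app}, whose perturbation theory in turn requires the local-in-time Strichartz estimates of Section~\ref{S:7}. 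Likewise the extension to all $t\in\R$ is a diagonal argument over compact time intervals, not a scattering argument, since the torus dynamics does not scatter.
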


Note that no coherence is assumed for $v_n$ as $n$ varies; this encapsulates the underlying uniformity of this embedding of the finite-dimensional systems.  This is essential for our proof of Theorem~\ref{thm:nsqz}, since in that case, the $v_n$ will simply be witnesses to the non-squeezing phenomenon in the finite-dimensional setting.  Since all known proofs of Theorem~\ref{T:Gromov} are obstructive in nature, we cannot say anything at all about these witnesses --- they are merely inside the ball at time $t=0$ and outside the cylinder at time $t=T$.

This embedding theorem is not inherently weaker or stronger than Theorem~\ref{T:fad}.  For purposes of exposition, however, we have chosen to prove Theorem~\ref{T:dad} first and then use it (together with some ingredients of the proof) to verify Theorem~\ref{T:fad}.

\subsection{Outline of Proof}

The proofs of Theorems \ref{thm:nsqz}, \ref{T:fad}, and \ref{T:dad} all rest on the same three pillars:

\noindent
$\;\bullet$ Theorem~\ref{thm:scapds}, showing well-posedness and space-time bounds for the frequency-localized NLS on $\R^2$.  This is a key ingredient in our proof that the approximation by finite-dimensional systems is stable.

\noindent
$\;\bullet$ Theorem~\ref{thm:wc}, which shows the following:  Given a sequence $\tilde u_n$ of solutions to the frequency-localized NLS on $\R^2$ with frequency truncation at height $M_n\to\infty$, whose initial data $\tilde u_n(0)$ converge weakly in $L^2(\R^2)$, then the sequence $\tilde u_n(t)$ converges weakly in $L^2(\R^2)$ at all times $t\in\R$; moreover, the pointwise-in-time weak limit is a solution to (NLS).

\noindent
$\;\bullet$ Theorem~\ref{thm:app}, which shows that solutions to the frequency-localized NLS on the torus can be approximated \emph{in norm} by solutions to the frequency-localized NLS on the whole plane $\R^2$.

Before discussing the character and proof of each of these theorems --- and the sizable corps of preliminary results that this requires --- let us first explain (at least in caricature) how these results lead to a proof of Theorem~\ref{T:dad}; details are given in Section~\ref{S:9}.

Let the sequence $v_n(t)$ of solutions to \eqref{approx system} posed on the torus with $M_n\to\infty$ be given.  By Theorem~\ref{thm:app} there is a sequence of solutions $\tilde u_n(t)$ to \eqref{approx system} posed on the plane that approximates $v_n(t)$ in norm (as $n\to\infty$) at any time $t$.  Suppose now that we pass to a subsequence along which $\tilde u_n(0)$ converges.  In this reduced setting, Theorem~\ref{thm:wc} guarantees that $\tilde u_n(t)$ can be approximated by a single solution to (NLS), at least in the weak topology.  This is then sufficient (by Theorem~\ref{BG-like}) to guarantee that the solutions $u_n(t)$ to (NLS) with initial data $u_n(0)=\tilde u_n(0)$ approximate the solutions $v_n$ in the weak topology.

The above argument did not make direct use of Theorem~\ref{thm:scapds}.  As we discussed in the early part of subsection~\ref{SS:approx}, this is however an essential and non-trivial ingredient in the proof of Theorem~\ref{thm:wc}.  Indeed, one cannot hope to describe the asymptotic behavior of the sequence $\tilde u_n$ without uniform bounds.   Theorem~\ref{thm:scapds} is also essential in the proof of Theorem~\ref{thm:app}.

The proof of Theorem~\ref{thm:scapds} proceeds by the inductive argument outlined earlier.  This occupies the whole Section~\ref{S:4} and uses a number of tools developed in Section~\ref{S:3}.

Theorem~\ref{BG-like} is the analogue of Theorem~\ref{thm:wc} where we consider sequences of solutions $u_n$ to (NLS), rather than to the frequency-localized problem.  It says that if $u_n(0)\rightharpoonup u_{\infty,0}$ and $u_\infty(t)$ is the solution to (NLS) with initial data $u_{\infty,0}$, then $u_n(t)\rightharpoonup u_\infty(t)$ for all $t\in\R$.  (Here all limits are in the weak topology on $L^2$.)  Note that this is precisely the statement that (NLS) is well-posed in the weak topology on $L^2(\R^2)$.

Well-posedness in the weak topology does not follow from well-posedness in norm.  Although strong-to-strong continuity implies weak-to-weak continuity for linear maps, this is no longer true for nonlinear maps: the map $f\mapsto |f|$ acting on $L^2(\R/\Z)$ is a particularly simple example.

Well-posedness in the weak topology was first proved at critical regularity in \cite{BG99}, which treated the energy-critical wave equation.  (In the subcritical setting, such an assertion is much easier to prove due to local compactness.)  Due to criticality, well-posedness in the weak topology actually requires one to develop a structure theorem for sequences of solutions up to a vanishing error in \emph{norm}.  This is precisely what a nonlinear profile decomposition does; indeed, the development of such decompositions is one of the fundamental advances in \cite{BG99}.  A nonlinear profile decomposition for (NLS) was constructed in \cite{Keraani} building on the linear profile decomposition of \cite{MerleVega}.  These tools permit a direct proof of Theorem~\ref{BG-like} along the lines laid out in~\cite{BG99}.

The proof of Theorem~\ref{thm:wc}, in which we are considering a sequence of solutions $\tilde u_n$ that each obey a different equation, is more difficult.  The presence of the Fourier multiplier in the nonlinearity means that the nonlinear profiles are not simply related by symmetries of (NLS); the equation that each profile solves depends on the relation of its intrinsic length scale to the frequency cutoff.  This obstacle is overcome in Section~\ref{S:5}, where the requisite nonlinear profile decomposition is obtained; see Theorem~\ref{T:npd}.  In particular, the reader will see that the broken scale invariance leads to \emph{three} distinct types of nonlinear profiles.

The proof of Theorem~\ref{thm:wc} is completed in Section~\ref{S:6}.  Note that the presence of profiles not built from solutions of (NLS) shows that it is essential that Theorem~\ref{thm:wc} relates to the weak topology; no such approximation is possible in norm. 

This leaves us to discuss the third pillar, namely, Theorem~\ref{thm:app}.  The change in geometry from the torus to the plane necessitates understanding solutions in the norm topology, even if one ultimately only seeks conclusions in the weak topology, just as in the case of the Fourier truncation discussed above.

The first step in connecting the two flows is to connect the initial data.  The obvious procedure here is to simply cut the torus and unwrap; equivalently, to view functions on the torus as functions on the plane supported in a single fundamental domain of the covering map $p:\R^2\to\R^2/L\Z^2$.  There is a problem with this idea: if one cuts the torus in a place where the initial data has a bubble of concentration, this will defeat norm approximation of the corresponding nonlinear solutions.  Indeed, the nonlinear evolution of two half-bubbles does not resemble the nonlinear evolution of the full bubble.  Instead, we choose $L$ very large and use a pigeonhole argument to find a location for cutting that is well-separated from any region of concentration of the initial data.

This argument also shows that it is not merely sufficient that the initial data is minimally affected by the cutting of the torus; this also needs to hold for the nonlinear solution up to time $T$.  This is effected by a careful choice of cutoffs (see subsection~\ref{SS:cutoffs}) and by controlling the motion of mass (cf. Lemma~\ref{lm:sm}).  This is one place where we see why it was essential to pass through the frequency-localized problem in the planar setting \emph{before} attempting to connect the dynamics of \eqref{NLS} to that of the finite-dimensional system.

Having transferred the initial data to the plane, one can then build solutions to the frequency-localized problem there.  One must then lift them back to the torus and verify that the mimic the dynamics there.  This requires overcoming several further difficulties; let us briefly describe some of these:

The frequency-localization operators on the plane and torus are not identical.  Their symbols may be the same, but their kernels are not: one is built from a trigonometric sum, the other by a Fourier integral.  One must link the two; see Lemma~\ref{lm:cls}.  Subsections~\ref{SS:8.2} and~\ref{SS:8.3} also contain a number of lemmas controlling the way in which the two frequency-localization operators interact with cutoff functions. 

By Theorem~\ref{thm:scapds} we know that the solution to the frequency-localized problem on the plane obeys space-time bounds.  By resolving the preceding problems, we are able to show that the lift of the planar solution is almost a solution on the torus, in the sense that it satisfies the equation up to a small error.  One must then show that small errors in the equation lead to small modifications to the solution; this is what is known as perturbation theory.  Traditionally, perturbation theory is developed via Strichartz estimates.  However, the standard mass-critical Strichartz estimates fail on the torus in any dimension (cf. \cite{Bourg:lattice}).  

The role of Section~\ref{S:7} in this paper is to develop Strichartz estimates on $\R^2/\Z^2$ that are scale-invariant and do not lose derivatives, by exploiting the frequency cutoff appearing in our equation.  (Once again, it is essential to pass to the frequency-localized problem before moving to the torus.)  These estimates are then used to obtain a scale-invariant perturbation theory on the torus; see Proposition~\ref{P:stab}.

This brings to a close our outline of the proofs of the three main theorems.  While the arguments of this paper are presented in the concrete setting of the mass-critical NLS in two space dimensions, no peculiar feature of this equation has been exploited in the proofs.  Indeed, in many ways, this particular equation was selected because it seemed most antagonistic to the problem at hand.  We contend that the scheme presented here provides a road map of general applicability for proving finite-dimensional approximation and non-squeezing results in both finite and infinite volume.

\subsection*{Acknowledgements} This material is based on work supported by the National Science Foundation under Grant No. 0932078000 while the authors were in residence at the Mathematical Sciences Research Institute in Berkeley, California, during the Fall 2015 semester.

The authors are grateful to MSRI and to the organizers of the program ``New Challenges in PDE: Deterministic Dynamics and Randomness in High and Infinite Dimensional Systems'', which provided not only the stimulus for this work, but also the perfect environment for its gestation. 

This work was partially supported by a grant from the Simons Foundation (\#342360 to Rowan Killip).
R.~K. was further supported by NSF grants DMS-1265868 and DMS-1600942.  M.~V. was supported by NSF grant DMS-1500707. X.~Z. was supported by a Simons Collaboration grant.

\section{Preliminaries}

Throughout this paper, we will will use
\begin{align*}
F(u):=|u|^2 u
\end{align*}
to denote the cubic nonlinearity.

The development of many estimates on the torus is often much simplified by exploiting its product structure; however, the induction on mass argument in Section~\ref{S:4} benefits even more substantially from choosing spherically symmetric Littlewood--Paley multipliers (rather than a product version).  For this reason we adopt the latter.  To this end, let $\varphi:\R^2\to[0,1]$ be smooth, spherically symmetric, and obey
$$
\varphi(x) = 1 \text{ for $|x|<1.41$} \qtq{and} \varphi(x) = 0 \text{ for $|x|>1.42$.}
$$
We then define Littlewood--Paley projections onto low frequencies according to
\begin{align}\label{E:LP defn}
\widehat{ P_{\leq N} f }(\xi) := \varphi(\xi/N) \hat f(\xi)
\end{align}
and then projections onto individual frequency bands via
\begin{align}\label{E:LP defn'}
f_N := P_N f := [ P_{\leq N} - P_{\leq N/2} ] f .
\end{align}

\begin{defn}[Strichartz spaces] We define the Strichartz norm of a space-time function via
$$
\| u \|_{S(I)} = \| u\|_{C^0_t L^2_x (I\times\R^2)} + \| u\|_{L_t^3 L^6_x (I\times\R^2)}
$$
and the dual norm via
$$
	\| F \|_{N(I)} = \inf_{F=F_1+F_2} \| F_1\|_{L^1_t L^2_x (I\times\R^2)} + \| F_2 \|_{L^{\frac32}_t L^{\frac65}_x (I\times\R^2)}
$$
\end{defn}

\begin{lem}[Bilinear Restriction \cite{Tao:bi}]\label{lm:bl1} Let $I$ be a compact interval.  Assume that $u_1,u_2:I\times\R^2\to \C$ have frequency supports in $\{ |\xi|\le N\}$ that are separated by at least $cN$. Then for $q>\frac53$ we have
\begin{align*}
\|u_1u_2\|_{L_{t,x}^q(I\times\R^2)}\lsm_{c,q} N^{2-\frac4q}\|u_1\|_{S_*(I\times\R^2)}\|u_2\|_{S_*(I\times \R^2)},
\end{align*}
where $S_*$ denotes the strong Strichartz norm which is defined by
\begin{align*}
\|u\|_{S_*(I\times\R^2)}:=\inf_{t_0\in I} \|u(t_0)\|_2+\|(i\partial_t+\Delta)u\|_{N(I)} .
\end{align*}
\end{lem}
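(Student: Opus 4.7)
The plan is to reduce the claim to the purely homogeneous bilinear restriction theorem of Tao~\cite{Tao:bi}, which yields exactly the stated inequality when $F_i := (i\partial_t + \Delta)u_i$ vanishes and the $S_*$ norm collapses to $\|u_i(t_0)\|_2$. The reduction uses Duhamel's formula together with Minkowski's inequality, viewing the forcing terms as continuous superpositions of free Schr\"odinger evolutions.

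Fix $t_0 \in I$ realizing the infimum in $\|u_i\|_{S_*}$ (up to $\varepsilon$), and choose a splitting $F_i = F_i^{(1)} + F_i^{(2)}$ with $F_i^{(1)}\in L^1_tL^2_x$ and $F_i^{(2)}\in L^{3/2}_tL^{6/5}_x$ witnessing the infimum in $\|F_i\|_{N(I)}$. Write
$$u_i(t) = e^{i(t-t_0)\Delta}u_i(t_0) - i\int_{t_0}^t e^{i(t-s)\Delta}F_i^{(1)}(s)\,ds - i\int_{t_0}^t e^{i(t-s)\Delta}F_i^{(2)}(s)\,ds =: v_i(t) + w_i^{(1)}(t) + w_i^{(2)}(t).$$
Each summand inherits the assumed frequency localization, since the free propagator is a Fourier multiplier and $F_i$ is the image of a frequency-localized $u_i$ under a constant-coefficient differential operator. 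Expanding $u_1 u_2$ produces nine bilinear terms, which I treat in three groups.

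The homogeneous--homogeneous term $v_1 v_2$ is handled directly by Tao's theorem, giving the $N^{2-4/q}$ gain against $\|u_1(t_0)\|_2\|u_2(t_0)\|_2$. Any term containing a $w_i^{(1)}$ factor is handled by Minkowski: moving the $L^q_{t,x}$ norm inside the $s$-integral defining $w_i^{(1)}$ (and doing the same for the other factor if it is also a $w$ term) reduces each $s$-slice to a product of two free evolutions of $L^2$ data; Tao's theorem then applies slice-by-slice, and the outer $s$-integrals reassemble $\|F_i^{(1)}\|_{L^1_tL^2_x}$.

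The main obstacle, which would consume the bulk of the argument, is the terms involving $w_i^{(2)}$: the factor $\|F_i^{(2)}(s)\|_{L^{6/5}_x}$ is not an admissible surrogate for an $L^2$ datum, so Minkowski alone is insufficient. Here I would invoke the Christ--Kiselev lemma (applicable since the target temporal exponent $\infty$ strictly exceeds the source exponent $3/2$) to trade the retarded Duhamel operator for the non-retarded one $F\mapsto e^{it\Delta}\int_I e^{-is\Delta}F(s)\,ds$; the latter maps $L^{3/2}_tL^{6/5}_x \to L^\infty_tL^2_x$ by the dual of the Strichartz estimate at the admissible pair $(3,6)$. A dyadic time decomposition of $I$, furnished by the standard proof of Christ--Kiselev, then identifies $w_i^{(2)}$ on each block with a free Schr\"odinger evolution of an $L^2$ datum whose norm is controlled by the restricted $L^{3/2}_tL^{6/5}_x$ norm of $F_i^{(2)}$. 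Applying Tao's theorem to each bilinear pairing of blocks and summing (using the orthogonality inherent to Christ--Kiselev, and absorbing any logarithmic loss into the strict inequality $q>5/3$) yields the desired bound in terms of $\|F_1^{(2)}\|_{L^{3/2}_tL^{6/5}_x}\|F_2^{(2)}\|_{L^{3/2}_tL^{6/5}_x}$, closing the estimate. The fact that $q>5/3$ is precisely the range of validity of Tao's homogeneous theorem in two dimensions dictates the same exponent condition in the present lemma.
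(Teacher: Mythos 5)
The paper does not prove this lemma: it is quoted with a citation to \cite{Tao:bi}, and what Tao's paper literally establishes is the homogeneous case (free evolutions of separated-frequency $L^2$ data); the upgrade to the $S_*$ formulation is left to the standard literature (cf.\ the analogous \cite[Lemma 2.5]{Visan:Duke}). Your proposal reconstructs exactly that standard upgrade --- Duhamel, Minkowski for the $L^1_tL^2_x$ component of the forcing, and Christ--Kiselev for the $L^{3/2}_tL^{6/5}_x$ component --- and the architecture is correct, including the staged treatment of the nine cross terms (free/free, then free/Duhamel, then Duhamel/Duhamel). Two points deserve tightening. First, the near-optimal decomposition $F_i=F_i^{(1)}+F_i^{(2)}$ realizing the $N(I)$ norm need not respect the frequency localization of $u_i$; you should insert a fattened frequency projection (equal to $1$ on the support of $\widehat{u_i}$, supported in a $cN/10$-neighbourhood, with kernel bounds uniform in $N$ by scaling) onto both pieces, which preserves $F_i$, is bounded on both dual Strichartz spaces, and only degrades the separation constant from $c$ to $c/2$. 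Second, your parenthetical justification of Christ--Kiselev cites the exponent pair $(3/2,\infty)$ of the linear retarded Strichartz estimate, but that is not the operator to which the lemma must be applied: one applies it to the bilinear map $F\mapsto\bigl(\int_I e^{i(t-s)\Delta}F(s)\,ds\bigr)\cdot v(t)$ viewed as landing in $L^q_{t,x}$, so the relevant condition is $3/2<q$, which holds since $q>5/3$; with this strict separation the dyadic sum is geometric and there is no logarithmic loss to absorb. With these repairs your argument is a complete and correct proof of the lemma as stated.
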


For the following, see \cite[Lemma 2.5]{Visan:Duke}, which builds on earlier versions in \cite{borg:book, CKSTT:gwp}.

\begin{lem}[Bilinear Strichartz] \label{lm:bl2}For $u,v:I\times\R^2\to \C$ we have
\begin{align*}
\|u_N v_M\|_{L_{t,x}^2\ir}\lsm\bigl(\tfrac MN\bigr)^{\frac 12}\|u_N\|_{S^0_*\ir}\|v_M\|_{S^0_*\ir}
\end{align*}
whenever $M\leq N$.
\end{lem}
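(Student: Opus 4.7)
The plan is to follow the standard Plancherel-in-space-time proof of bilinear Strichartz estimates pioneered by Bourgain \cite{borg:book} and refined in \cite{CKSTT:gwp,Visan:Duke}. First I would reduce to the case of free solutions. By Duhamel's formula and the definition of $S^0_*$, each of $u_N,v_M$ splits as $\prr(\cdot)+\int e^{i(t-s)\Delta}F(s)\,ds$ with $F\in N(I)$. Expanding the product $u_N v_M$ bilinearly into four pieces and applying Minkowski's inequality in the dummy time variable of each Duhamel integral---or equivalently a $TT^*$/Christ--Kiselev argument---reduces matters to proving the free-wave estimate
$$\|(\prr\phi)(\prr\psi)\|_{L^2_{t,x}(\R\times\R^2)}\lsm\bigl(\tfrac MN\bigr)^{1/2}\|\phi\|_2\|\psi\|_2$$
whenever $\hat\phi$ is supported in $\{|\xi|\sim N\}$ and $\hat\psi$ in $\{|\xi|\sim M\}$ with $M\le N$.

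For free solutions one computes
$$\widehat{(\prr\phi)(\prr\psi)}(\tau,\xi)=\int \hat\phi(\xi_1)\hat\psi(\xi-\xi_1)\,\delta\bigl(\tau+|\xi_1|^2+|\xi-\xi_1|^2\bigr)\,d\xi_1.$$
Plancherel in $(\tau,\xi)$ followed by the Cauchy--Schwarz inequality applied to the integral against the delta measure gives
$$\|(\prr\phi)(\prr\psi)\|_{L^2_{t,x}}^{2}\le \Bigl(\sup_{\tau,\xi}A(\tau,\xi)\Bigr)\|\phi\|_2^2\|\psi\|_2^2,$$
where
$$A(\tau,\xi)=\int \mathbf 1_{|\xi_1|\sim N}\,\mathbf 1_{|\xi-\xi_1|\sim M}\,\delta\bigl(\tau+|\xi_1|^2+|\xi-\xi_1|^2\bigr)\,d\xi_1.$$

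The heart of the argument is the geometric bound $\sup_{\tau,\xi}A\lsm M/N$. The level set $\{\xi_1:\tau+|\xi_1|^2+|\xi-\xi_1|^2=0\}$ is a circle in $\R^2$; its defining phase has gradient $\nabla_{\xi_1}[|\xi_1|^2+|\xi-\xi_1|^2]=2(\xi_1-(\xi-\xi_1))$, whose magnitude is $\ge 2(|\xi_1|-|\xi-\xi_1|)\gtrsim N$ on the support, provided $M\le N/C$ for a sufficiently large absolute constant $C$. Parametrizing the circle and tracking the rate of change of $|\xi-\xi_1|$ then shows that its intersection with the annulus $\{|\xi-\xi_1|\sim M\}$ has arclength $\lsm M$; coarea gives $A\lsm M/N$. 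The borderline regime $M\sim N$ is handled separately via H\"older and the $L^4_{t,x}$ endpoint Strichartz bound $\|w\|_{L^4_{t,x}(\R\times\R^2)}\lsm \|w\|_{S^0_*}$, since then $(M/N)^{1/2}\sim 1$. The main obstacle is the geometric step: the gradient lower bound and the arclength upper bound must be combined carefully, and both rely essentially on $M\le N$---the bilinear gain disappears entirely for comparable, non-separated frequencies.
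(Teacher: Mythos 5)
Your argument is correct and is essentially the same as the paper's, which does not prove the lemma itself but cites \cite[Lemma 2.5]{Visan:Duke} (building on \cite{borg:book,CKSTT:gwp}), where precisely this reduction to free solutions followed by space-time Plancherel, Cauchy--Schwarz against the delta measure, and the coarea/arclength bound on the intersection of the resonance circle with the annulus $\{|\xi-\xi_1|\sim M\}$ is carried out. The one step to treat with care is the inhomogeneous part: plain Minkowski handles only the $L^1_tL^2_x$ component of the $N(I)$ norm, and the $L^{3/2}_tL^{6/5}_x$ component genuinely requires the Christ--Kiselev argument you mention.
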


Before we record the linear profile decomposition for the Schr\"odinger propagator for bounded sequences in $L^2$, we first introduce some relevant notations and concepts.   Given parameters $(N_n,  \xi_n, x_n, t_n)\in \R^+\times\R^2\times\R^2\times \R$, we define unitary operators
\begin{align}
[g_n f](x):=[g_{N_n, \xi_n, x_n} f](x):= N_n e^{ix\cdot \xi_n}f(N_n(x-x_n))
\end{align}
and
\begin{align}\label{G defn}
G_n f:=G_{N_n, \xi_n, x_n, t_n} f:= g_n e^{it_n\Delta} f.
\end{align}
These operators are well suited to a discussion of the linear problem.  When we turn later to the nonlinear problem, we will need to employ analogous transformations of space-time functions, namely,
\begin{align}\label{T defn}
[T_n v](t,x) := N_n e^{ix\cdot \xi_n} e^{-it|\xi_n|^2} v(t_n+tN_n^2,N_n(x-x_n-2\xi_n t)).
\end{align}
Observe that $e^{it\Delta} G_n f = T_n [e^{it\Delta} f]$.  Incidentally, none of the collections of operators given above form groups (under composition); however, this could be remedied by augmenting by phase rotations.

\begin{defn} [Othogonality] Two quadruples of parameters $(N_n^j, \xi_n^j, x_n^j, t_n^j)$ and $(N_n^k, \xi_n^k, x_n^k, t_n^k)$ are said to be \emph{orthogonal} if for $n\to \infty$,
\begin{align*}
\tfrac{N_n^j}{N_n^k}+\tfrac{N_n^k}{N_n^j}+\tfrac{|\xi_n^j-\xi_n^k|^2}{N_n^j N_n^k}+&N_n^j N_n^k\bigl |\tfrac{t_n^j}{(N_n^j)^2}-\tfrac{t_n^k}{(N_n^k)^2}\bigr|\\
&+N_n^j N_n^k\bigl| x_n^j-x_n^k-2\tfrac{t_n^j}{(N_n^j)^2}(\xi_n^j-\xi_n^k)^2\bigr|\to 0.
\end{align*}
In this case we write $(N_n^j, \xi_n^j, x_n^j, t_n^j)\perp (N_n^k, \xi_n^k, x_n^k, t_n^k)$ or $j\perp k$ for short.
\end{defn}

\begin{thm}[Linear profile decomposition \cite{MerleVega}]\label{thm:lpd}
Let $u_n$ be a bounded sequence in $L^2(\R^2)$. Then, passing to a subsequence if necessary, there exists $J^*\in \{0, 1, \cdots\}\cup\{\infty\}$ and for each finite $1\leq j \leq J^*$ there exist a non-trivial function $\phi^j\in L^2(\R^2)$ and parameters $(N_n^j, \xi_n^j, x_n^j, t_n^j)\subset \R^+\times\R^2\times\R^2\times \R$, conforming to one of the three cases below, so that defining $G_n^j:=G_{N_n^j, \xi_n^j, x_n^j, t_n^j}$ and $r_n^J$ via
\begin{align*}
u_n=\sum_{j=1}^ J G_n^j\phi^j +r_n^J,
\end{align*}
we have the following properties:
\begin{gather*}
j\perp k \qtq{for any}j\neq k, \\
\lim_{J\to \infty}\limsup_{n\to \infty} \|e^{it\Delta}r_n^J\|_{L_{t,x}^4}=0,\\
\phi^1=\wlim (G_n^1)^{-1} u_n \qtq{and} \phi^j=\wlim(G_n^j)^{-1} r_n^{j-1} \qtq{for} j\ge 2, \\
\sup_J \lim_{n\to \infty}\biggl[\|u_n\|_2^2 -\sum_{j=1}^J \|\phi^j\|_2^2 -\|r_n^J\|_2^2\biggr]=0.
\end{gather*}
The three cases are:

Case I: $N_n^j\to \infty$; or $N_n^j\to 0$ and  $|\xi_n^j|\to \infty$; or $N_n^j \equiv 1$ and $|\xi_n^j|\to \infty$.

Case II: $N_n^j\to 0$ and  $\xi_n^j\to \xi^j\in \R^2$.

Case III: $N_n^j\equiv 1$ and $\xi_n^j\equiv 0$.

\noindent
In each of these cases we may assume that either $t_n^j\equiv 0$ or $t_n^j\to \pm \infty$.
\end{thm}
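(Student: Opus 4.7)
The plan is to follow the classical Keraani--B\'egout--Vargas scheme, whose engine is an \emph{inverse Strichartz inequality}: if $\|u_n\|_{L^2(\R^2)}\le A$ and $\liminf_{n\to\infty}\|\prr u_n\|_{L^4_{t,x}(\R\times\R^2)}\ge \eps$, then after passing to a subsequence there exist parameters $(N_n,\xi_n,x_n,t_n)\in\R^+\times\R^2\times\R^2\times\R$ such that, writing $G_n=G_{N_n,\xi_n,x_n,t_n}$, one has $G_n^{-1}u_n\rightharpoonup \phi$ in $L^2(\R^2)$ with $\|\phi\|_2\gtrsim_A \eps^C$. The proof of this inverse estimate is where the analytic content of the theorem lives. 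It proceeds by a \emph{refined Strichartz} step: after a dyadic Littlewood--Paley decomposition of $u_n$, the bilinear restriction estimate of Lemma~\ref{lm:bl1} controls interactions between frequency cubes at separated locations, and a Whitney decomposition of $\R^2\times\R^2\setminus\{\xi=\eta\}$ reduces the $L^4_{t,x}$ norm to a single scale/cube Strichartz norm. A pigeonhole step selects a single dyadic frequency cube carrying a quantitative share of the $L^4_{t,x}$ norm; this identifies $(N_n,\xi_n)$, and duality against a bump supported on this cube, followed by a further pigeonhole in physical space, identifies $(x_n,t_n)$ and produces the required non-trivial weak limit.

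Given the inverse inequality, the second step is to \emph{iterate}. Set $\phi^1:=\wlim (G_n^1)^{-1}u_n$, define $r_n^1:=u_n-G_n^1\phi^1$, and use weak convergence in $L^2$ together with the isometry of $G_n^1$ to conclude $\|r_n^1\|_2^2=\|u_n\|_2^2-\|\phi^1\|_2^2+o(1)$. Apply the inverse inequality to $r_n^1$ to extract $\phi^2$, and repeat. The asymptotic orthogonality $j\perp k$ is automatic: if it failed for some $j<k$ then, after passing to a further subsequence, $(G_n^k)^{-1}G_n^j$ would converge in the strong operator topology to a unitary-type limit $U$; but by construction of the earlier profiles, $(G_n^j)^{-1}r_n^{k-1}\rightharpoonup 0$, which would force $\phi^k=U\cdot 0=0$, contradicting non-triviality. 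The Pythagorean identity summed over $j$ gives $\sum_j\|\phi^j\|_2^2\le A^2$, so $\|\phi^j\|_2\to 0$, and the lower bound in the inverse inequality then yields $\|\prr r_n^J\|_{L^4_{t,x}}\to 0$ as $J\to\infty$.

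The third step is to \emph{normalize} each frame into one of the three canonical forms. Passing to subsequences, $N_n^j$ tends to a point in $[0,\infty]$ and $\xi_n^j/N_n^j$ to a point in $\R^2\cup\{\infty\}$; the residual action on $\phi^j$ by scalings, modulations, Galilei boosts, and the flow $\prr$ absorbs any bounded parts of the parameters. If $N_n^j\to\infty$, no further adjustment is needed (first subcase of Case~I); if $N_n^j\to 0$, one separates $\xi_n^j\to\xi^j$ (Case~II) from $|\xi_n^j|\to\infty$ (second subcase of Case~I); if $N_n^j$ is bounded above and below, one rescales so that $N_n^j\equiv 1$, and then either absorbs a bounded $\xi_n^j$ via a Galilei boost to reach $\xi_n^j\equiv 0$ (Case~III) or is left with $|\xi_n^j|\to\infty$ (third subcase of Case~I); an analogous normalization forces $t_n^j\equiv 0$ or $t_n^j\to\pm\infty$. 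The main obstacle throughout is the inverse Strichartz inequality itself: the tight coupling of the bilinear restriction bound to a Whitney decomposition in frequency, together with the need to extract \emph{both} a scale $N_n$ and a frequency center $\xi_n$, is precisely what forces the three-case structure in the conclusion; everything after the inverse inequality is routine orthogonality-and-Pythagoras bookkeeping.
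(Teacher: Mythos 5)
Your proposal is correct in outline, but it proves considerably more than the paper does, and it is worth being clear about where the two diverge. The paper treats the core decomposition --- the inverse Strichartz inequality, the extraction/iteration, the Pythagorean decoupling, and the vanishing of $\|e^{it\Delta}r_n^J\|_{L^4_{t,x}}$ --- as a black box imported from Merle--Vega; its entire proof consists of what you call the ``third step,'' namely showing that after passing to subsequences and modifying the profiles one may assume the parameters conform to one of the three canonical cases (rescaling a convergent $N_n^j$ to $N_n^j\equiv 1$, absorbing a convergent $t_n^j$ by replacing $\phi^j$ with $e^{it^j\Delta}\phi^j$, and, when $N_n^j\equiv 1$ and $\xi_n^j\to\xi^j$, absorbing the boost into the profile after re-centering the translation parameter as $\tilde x_n^j=x_n^j-2t_n^j\xi_n^j$ and extracting a limiting phase). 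Your sketch of that normalization matches the paper's argument. By contrast, your first two steps reconstruct the cited result itself: the refined Strichartz estimate via Lemma~\ref{lm:bl1} and a Whitney decomposition, the pigeonholing that identifies $(N_n,\xi_n)$ and then $(x_n,t_n)$, and the standard extraction-plus-orthogonality iteration. That reconstruction is sound and is essentially the B\'egout--Vargas/Keraani route, though two points deserve more care if you were to write it out: the asymptotic orthogonality $j\perp k$ is not quite ``automatic'' as stated --- one needs an induction on $k$ (taking $j$ to be the \emph{first} index violating orthogonality with $k$, exactly as the paper does later in the proof of Theorem~\ref{thm:wc}) to avoid circularity in showing $(G_n^j)^{-1}r_n^{k-1}\rightharpoonup 0$; and the relevant limit to track in the case analysis is $\xi_n^j$ itself (together with $N_n^j$), not $\xi_n^j/N_n^j$. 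The trade-off is the usual one: the paper's proof is short because it leans on the literature, while yours is self-contained but would require several pages to make the inverse Strichartz step rigorous.
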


\begin{proof}
The result quoted here differs from that of \cite{MerleVega} only in claiming that it is possible to choose parameters that conform to one of the three cases given.   Therefore, it suffices to show that by passing to a further subsequence and altering the profiles $\phi^j$, if necessary, one may reduce general tuples of parameters to conformant tuples.

Passing to a subsequence and applying a diagonal procedure, we can assume that either $t_n^j\to t^j\in \R$ or $t_n^j\to \pm\infty$ for every $j$.  When $t_n^j\to t^j\in \R$, by the dominated convergence theorem,
\begin{align*}
\lim_{n\to\infty}\|(e^{it_n^j\Delta}-e^{it^j\Delta})\phi^j\|_2=0.
\end{align*}
Therefore, in this case we can assume $t_n^j\equiv0$ by redefining $\phi^j$ as $e^{it^j\Delta}\phi^j$ and throwing the error into $r_n^J$.

Passing to a further subsequence and applying a diagonal procedure again we can also assume that for each $j$,
\begin{align*}
\lim_{n\to \infty} N_n^j =0 \qtq{or} \lim_{n\to \infty} N_n^j=N^j\in \R^+ \qtq{or} \lim_{n\to \infty}N_n^j=\infty.
\end{align*}
If $N_n^j\to N^j\in \R^+$, by redefining $\phi^j$ as $N^j\phi^j(N^j\cdot)$ and rescaling the other parameters accordingly (which does not affect the asymptotic orthogonality of profiles), we can assume $N_n^j\to N^j=1$.   Moreover, as in this case
$$
\lim_{n\to\infty}\|N_n^j\phi^j(N_n^jx)- \phi^j(x)\|_{L_x^2}=0,
$$
we may assume that $N_n^j\equiv 1$, by throwing the error into $r_n^J$.  Hence, for each $j$ we can assume
\begin{align*}
\lim_{n\to \infty} N_n^j =0 \qtq{or} N_n^j\equiv 1 \qtq{or} \lim_{n\to \infty}N_n^j=\infty.
\end{align*}

When $N_n^j\to 0$ or $N_n^j\equiv 1$, by passing to a further subsequence we can assume that
\begin{align*}
\text{either} \quad |\xi_n^j|\to \infty \qtq{or} \xi_n^j\to \xi^j\in \R^2.
\end{align*}

Finally, we make the requisite further simplifications in the case $N_n^j\equiv 1$ and $\xi_n^j\to \xi^j\in \R^2$.  Observe that if we set $\tilde x_n^j = x_n^j - 2t_n^j \xi_n^j$, then
\begin{align*}
\phi_n^j=G_n^j \phi^j&=e^{it_n^j\Delta}\bigl[e^{ix\cdot \xi_n^j}e^{it_n^j|\xi_n^j|^2}\phi^j(x-x_n^j+2t_n^j\xi_n^j)\bigr]\\
&=e^{it_n^j\Delta}\bigl[e^{i(x-\tnj)\cdot \xi_n^j}\phi^j(x-\tnj)e^{i(\tnj\cdot\xi_n^j+t_n^j|\xi_n^j|^2)}\bigr].
\end{align*}
Passing to a subsequence, we can assume
\begin{align*}
\lim_{n\to \infty}e^{i(\tnj\cdot \xi_n^j+t_n^j|\xi_n^j|^2)}=e^{i\theta}.
\end{align*}
On the other hand, the dominated convergence theorem yields
\begin{align*}
\lim_{n\to \infty}\bigl\| e^{i\xi_n^j \cdot x}\phi^j -e^{i\xi^j\cdot x}\phi^j\bigr\|_2 =0.
\end{align*}
Thus, if we replace $\phi^j$ by $e^{i\theta} e^{i\xi^j\cdot x}\phi^j$, set $\xi_n^j\equiv 0$ and replace the original translation parameters $x_n^j$ by $\tilde x_n^j$, then the resulting errors can be safely absorbed into $r^J_n$.  It is elementary to verify that the changes just described do not spoil the orthogonality between the $j^\text{th}$ profile and the others.
\end{proof}

\section{Well-posedness theory for several NLS equations}\label{S:3}
Throughout the paper, we will use several versions of the cubic NLS, which can be written into the following general form
\begin{align}\label{eq:1}
iu_t+\Delta u=\alpha^4 \Po F(\Po u),
\end{align}
where $0\le \alpha\le 1$ and $\Po$ is a Mikhlin multiplier with real symmetric symbol.  All estimates in this section will be uniform in $\alpha$ and in the symbol norms associated to the multiplier underlying $\Po$.

If $\alpha=0$, \eqref{eq:1} is the linear Schr\"odinger equation.  When $\alpha=1$ and $\Po=I$, \eqref{eq:1} is the cubic NLS, which we denote by NLS.  If $\Po=I$, we call the equation $\nalf$.

Solutions to \eqref{eq:1} conserve the mass and energy
\begin{align*}
\int_{\R^2} |u(t,x)|^2 \,dx \qtq{and}
E(u(t)):=\int_{\R^2}\tfrac 12 |\nabla u(t,x)|^2 +\tfrac {\alpha^4}4 |\Po u(t,x)|^4 \,dx.
\end{align*}
Indeed, \eqref{eq:1} is the Hamiltonian evolution associated to $E(u)$ through the standard symplectic structure on $L^2(\R^2)$.

Next, we record the basic local and perturbation theories for \eqref{eq:1}; as the operator $\Po$ is $L^p$-bounded, no meaningful changes need to be made to the proofs given in \cite{Clay}, for example.

\begin{lem}[Local theory]\label{lm:loc}
Let $u_0\in L^2(\R^2)$ with $\|u_0\|_2\le M$.  There exists $\eps_0>0$ such that whenever $\eps<\eps_0$ and
\begin{align*}
\|e^{it\Delta}u_0\|_{\ftr}\le \eps
\end{align*}
for some interval $I$ containing $0$, there exists a unique solution $u:I\times\R^2\to \C$ to \eqref{eq:1} with initial data $u(0)=u_0$.  Moreover,
\begin{align*}
\|u\|_{\ftr}\le 2\eps \qtq{and} \|u\|_{S(I)}\lsm M.
\end{align*}
In particular, when $M\lesssim \eps_0$ (with the implicit constant given by the Strichartz inequality), the solution $u$ is global and satisfies
\begin{align*}
\|u\|_{L_{t,x}^4(\R\times\R^2)}\lesssim \| u_0 \|_{L^2(\R^2)}.
\end{align*}
\end{lem}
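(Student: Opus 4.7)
The plan is to prove this via the standard contraction mapping argument applied to the Duhamel formula, leveraging Strichartz estimates together with the $L^p$ boundedness of the Mikhlin multiplier $\mathcal P$.

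First I would set up the map
$$
\Phi(u)(t) := e^{it\Delta} u_0 - i\alpha^4 \int_0^t e^{i(t-s)\Delta} \mathcal P F(\mathcal P u(s))\, ds
$$
on the complete metric space
$$
X := \bigl\{ u \in S(I) : \|u\|_{L^4_{t,x}(I\times\R^2)} \le 2\eps,\ \|u\|_{S(I)} \le C_0 M \bigr\}
$$
equipped with the $L^4_{t,x}$ metric, for a suitable absolute constant $C_0$ coming from Strichartz. The key estimates are the (dual) Strichartz inequality for the dual admissible pair $(\tfrac43,\tfrac43)$, the $L^{4/3}$-boundedness of $\mathcal P$ (since it is Mikhlin, hence Calder\'on--Zygmund, hence bounded on every $L^p$ with $1<p<\infty$), and the pointwise bound $|F(\mathcal P u)| \lesssim |\mathcal P u|^3$. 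These combine to give
$$
\|\mathcal P F(\mathcal P u)\|_{L^{4/3}_{t,x}(I\times\R^2)} \lesssim \|\mathcal P u\|_{L^4_{t,x}}^3 \lesssim \|u\|_{L^4_{t,x}}^3,
$$
again using $L^4$-boundedness of $\mathcal P$ on the inside.

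Then the hypothesis $\|e^{it\Delta} u_0\|_{L^4_{t,x}} \le \eps$ together with Strichartz estimates gives
$$
\|\Phi(u)\|_{L^4_{t,x}} \le \eps + C_1 \alpha^4 \|u\|_{L^4_{t,x}}^3 \le \eps + 8 C_1 \eps^3 \le 2\eps
$$
provided $\eps_0$ is chosen so that $8C_1 \eps_0^2 \le 1$. Similarly,
$$
\|\Phi(u)\|_{S(I)} \le C_2 \|u_0\|_2 + C_1 \|u\|_{L^4_{t,x}}^3 \le C_2 M + 8 C_1 \eps^3 \lesssim M,
$$
so $\Phi$ maps $X$ into itself. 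For the contraction property, I would use the elementary difference bound $|F(\mathcal P u) - F(\mathcal P v)| \lesssim (|\mathcal P u|^2 + |\mathcal P v|^2)|\mathcal P(u-v)|$, H\"older in space-time, and boundedness of $\mathcal P$ on $L^4$ and $L^{4/3}$ to obtain
$$
\|\Phi(u) - \Phi(v)\|_{L^4_{t,x}} \le C_3 \bigl( \|u\|_{L^4_{t,x}}^2 + \|v\|_{L^4_{t,x}}^2 \bigr) \|u-v\|_{L^4_{t,x}} \le 8 C_3 \eps^2 \|u-v\|_{L^4_{t,x}},
$$
which is a strict contraction for $\eps_0$ sufficiently small. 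The Banach fixed point theorem then yields a unique solution in $X$; the Strichartz bound $\|u\|_{S(I)} \lesssim M$ follows from the fixed-point identity and the estimates above. Uniqueness in the larger class of Strichartz solutions is standard via a Gronwall/continuity argument once one has a local $L^4_{t,x}$ bound.

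For the final small-data global assertion, I would invoke the global Strichartz bound $\|e^{it\Delta} u_0\|_{L^4_{t,x}(\R\times\R^2)} \le C_4 \|u_0\|_2 \le C_4 M$. If $M$ is taken small enough that $C_4 M \le \eps_0$, then the hypothesis of the first part is satisfied on $I = \R$, producing a global solution with $\|u\|_{L^4_{t,x}(\R\times\R^2)} \le 2 C_4 M \lesssim \|u_0\|_2$. No step here is truly hard --- the argument is a textbook contraction --- but the one point that needs to be kept clean is to verify that all constants are uniform in $\alpha \in [0,1]$ and in the $\mathcal P$ multiplier (which is automatic because only the Mikhlin/Calder\'on--Zygmund norm enters, bounding $\mathcal P$ on the fixed Lebesgue spaces $L^4$ and $L^{4/3}$).
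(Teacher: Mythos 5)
Your contraction-mapping argument is correct and is essentially the proof the paper intends: the paper gives no argument for Lemma~\ref{lm:loc}, remarking only that since $\Po$ is $L^p$-bounded the standard local theory of \cite{Clay} applies verbatim, and that standard argument is exactly what you have written out (with the right admissible pair $(4,4)$ in $d=2$ and uniformity in $\alpha$ and the Mikhlin norm correctly noted). The one line to tighten is the self-map bound for the $S(I)$ norm, since $C_2M+8C_1\eps^3$ need not be $\lesssim M$ when $M\ll\eps^3$; either estimate the Duhamel term by $C_1\|u\|_{L^4_{t,x}}^2\|u\|_{S(I)}\le 4C_1\eps^2\,C_0M$ and absorb it for $\eps_0$ small, or observe that Strichartz lets you replace $\eps$ by $\min(\eps,C_4M)$ throughout.
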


\begin{lem}[Perturbation theory]\label{lm:stab}
Let $I$ be a compact interval. Let $\tilde u:I\times\R^2\to \C$ be an approximate solution to \eqref{eq:1} in the sense that
\begin{align*}
(i\partial_t+\Delta)\tilde u=\alpha^4 \Po F(\Po \tilde u)+e
\end{align*}
for some function $e$. Assume that
\begin{align*}
\|\tilde u\|_{L^\infty_t L^2_x\ir}\le M \qtq{and} \|\tilde u\|_{\ftr}\le L
\end{align*}
for some positive constants $M,L$.  Let $u_0\in L^2$ be such that for some $t_0\in I$,
\begin{align*}
\|u_0-\tilde u(t_0)\|_2\le M'
\end{align*}
for some positive constant $M'$.  Finally, assume the smallness conditions
\begin{align*}
\|e^{i(t-t_0)\Delta}(u_0-\tilde u(t_0))\|_{\ftr} +  \Bigl\|\int_{t_0}^t e^{i(t-s)\Delta } e(s) \,ds\Bigr\|_{ S(I)}\le \eps
\end{align*}
for some $0<\eps<\eps_0(M, M',L)$. Then there exists a solution $u:I\times\R^2\to \C$ to \eqref{eq:1} with initial data $u_0$ at $t=t_0$ satisfying
\begin{gather*}
\|u-\tilde u\|_{\ftr}\le \eps C(M,M',L)\\
\|u-\tilde u\|_{S(I)}\le M'C(M, M', L) \\
\|u\|_{S(I)}\le C(M,M',L).
\end{gather*}
\end{lem}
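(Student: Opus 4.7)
The plan is a standard continuity/bootstrap argument run on short subintervals, exactly as in \cite{Clay}, adapted to the presence of the bounded multiplier $\Po$. The only feature of $\Po$ that will be used is its uniform boundedness on every $L^p(\R^2)$ with $1<p<\infty$ (with bounds depending only on the Mikhlin symbol), so all constants will be uniform in $\alpha\in[0,1]$ and in the symbol.

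First I would partition $I=\bigcup_{k=1}^K I_k$ of consecutive intervals $I_k=[t_{k-1},t_k]$ such that
\begin{equation*}
\|\tilde u\|_{\ftr[I_k]}\le \eta,
\end{equation*}
where $\eta=\eta(M)$ is a small absolute constant to be chosen, and $K=K(L,\eta)$. On each $I_k$ the difference $w:=u-\tilde u$ satisfies
\begin{equation*}
(i\partial_t+\Delta)w=\alpha^4\Po\bigl[F(\Po(\tilde u+w))-F(\Po\tilde u)\bigr]-e,
\end{equation*}
with $w(t_{k-1})=u(t_{k-1})-\tilde u(t_{k-1})$. Applying Strichartz on $I_k$ to both the integrated nonlinear Duhamel formula and its linear and error parts, together with the pointwise bound $|F(a+b)-F(a)|\lesssim |a|^2|b|+|b|^3$, the $L^p$-boundedness of $\Po$, and H\"older in $\ftr$ (using $\frac43=\frac13+\frac13+\frac13$), one gets an estimate of the schematic form
\begin{equation*}
\|w\|_{\ftr[I_k]}+\|w\|_{S(I_k)}\lesssim \|w(t_{k-1})\|_2+\|e_k\|_{N(I_k)}+(\|\tilde u\|_{\ftr[I_k]}^2+\|w\|_{\ftr[I_k]}^2)\|w\|_{\ftr[I_k]},
\end{equation*}
where $e_k$ is localized to $I_k$ and controlled by the Strichartz-norm hypothesis on $e$. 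Since $\|\tilde u\|_{\ftr[I_k]}\le \eta$, a standard continuity argument closes provided $\|w(t_{k-1})\|_2$ and the Strichartz norm of the error on $I_k$ are sufficiently small relative to $\eta$.

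Next I would iterate. On $I_1$, hypothesis and Strichartz supply $\|w(t_0)\|_2\le M'$ and smallness of $\|e^{i(t-t_0)\Delta}w(t_0)\|_{\ftr}+\|\int_{t_0}^{\cdot}e^{i(\cdot-s)\Delta}e(s)\,ds\|_{S(I)}\le \eps$, so the previous step yields
\begin{equation*}
\|w\|_{\ftr[I_1]}\lesssim \eps, \qquad \|w\|_{S(I_1)}\lesssim M'+\eps.
\end{equation*}
To pass to $I_2$, I split off the linear evolution of $w(t_1)$ from $t_1$: writing $w(t_1)=e^{i(t_1-t_0)\Delta}w(t_0)+(\text{Duhamel terms})$, one sees that the new ``initial error'' at $t_1$ has its linear propagation in $\ftr[I_2]$ still bounded by $C\eps$ (plus the previously absorbed Duhamel contributions), while its $L^2$ norm grows by a multiplicative constant. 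One then reapplies the one-interval estimate on $I_2$, picks up another multiplicative factor, and proceeds. After $K=K(M,L)$ steps, the accumulated constant is some $C(M,M',L)$; choosing $\eps_0(M,M',L)$ small enough that $\eps C(M,M',L)\le \eta$ at every step guarantees that the bootstrap condition is met throughout. Summing over $k$ gives the advertised bounds on $\|u-\tilde u\|_{\ftr}$ and $\|u-\tilde u\|_{S(I)}$, and the triangle inequality with $\|\tilde u\|_{S(I)}\lesssim M+L$ yields the bound on $\|u\|_{S(I)}$.

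The main obstacle is the bookkeeping of how the $L^2$ error at the interval endpoints, and the linear and Duhamel contributions of the source $e$, propagate across subintervals without destroying the smallness required to re-close the bootstrap. In particular one must carefully verify that the linear evolution of the endpoint error, when measured in $\ftr$ on the next subinterval, is controlled by the original smallness quantity $\eps$ plus already-small Duhamel terms, rather than by the (merely bounded) quantity $M'$; this is where the multiplicative constants assemble into $C(M,M',L)$. The presence of $\Po$ is harmless since it only appears through $L^p$-boundedness in the nonlinear H\"older step.
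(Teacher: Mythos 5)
The paper gives no proof of this lemma: it simply notes that since $\Po$ is $L^p$-bounded, the standard stability argument of \cite{Clay} applies verbatim, and your proposal is precisely that standard argument (short-time bootstrap on subintervals where $\tilde u$ is small in $L^4_{t,x}$, followed by iteration with careful propagation of the endpoint error and the Duhamel contribution of $e$ through the Duhamel formula). Your sketch is correct and correctly identifies the one genuinely delicate point, namely that the linear evolution of the endpoint error on each later subinterval must be controlled by $\eps$ rather than by $M'$.
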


\begin{lem}[Persistence of positive regularity]\label{lm:perpo}
Let $u:I\times\R^2\to \C$ be a finite mass solution to \eqref{eq:1} with
\begin{align*}
\|u\|_{\ftr}\le L
\end{align*}
for some positive constant $L$.  Fix $0\le s\le 1$ and assume that $u(t_0)\in  H^s$ for some $t_0\in I$. Then
\begin{align*}
\||\nabla|^su\|_{S(I)}\le C(L)\|u(t_0)\|_{\dot H^s}.
\end{align*}
\end{lem}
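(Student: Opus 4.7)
The plan is to carry out the standard subdivision argument driven by the fractional chain rule: partition $I$ into finitely many subintervals on which $u$ has small $L^4_{t,x}$ norm, run $|\nabla|^s$-Strichartz on each piece, absorb the nonlinear contribution into the left-hand side, and then iterate outward from $t_0$.

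Concretely, given a small parameter $\eta>0$ to be chosen in terms of $L$, I will split $I = \bigcup_{j} I_j$ into $J \lesssim (L/\eta)^4$ consecutive subintervals $I_j = [t_j, t_{j+1}]$ (with $t_0$ included as one of the endpoints) on which
\begin{align*}
\|u\|_{L^4_{t,x}(I_j\times\R^2)} \leq \eta.
\end{align*}
On each $I_j$, using the Duhamel formula together with the Strichartz estimate for $|\nabla|^s u$ and the $L^p$-boundedness of the Mikhlin multiplier $\Po$ (which commutes with $|\nabla|^s$), I expect to obtain
\begin{align*}
\||\nabla|^s u\|_{S(I_j)} + \||\nabla|^s u\|_{L^4_{t,x}(I_j\times\R^2)}
\lesssim \|u(t_j)\|_{\dot H^s} + \||\nabla|^s F(\Po u)\|_{L^{4/3}_{t,x}(I_j\times\R^2)},
\end{align*}
since $(4,4)$ is dual admissible in two space dimensions and provides a convenient pair for controlling the cubic nonlinearity.

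For the nonlinear piece, the fractional chain rule of Christ--Weinstein (valid uniformly for $0 \leq s \leq 1$) combined with H\"older gives
\begin{align*}
\||\nabla|^s F(\Po u)\|_{L^{4/3}_{t,x}(I_j)}
\lesssim \|\Po u\|_{L^4_{t,x}(I_j)}^{2}\,\||\nabla|^s \Po u\|_{L^4_{t,x}(I_j)}
\lesssim \eta^2\,\||\nabla|^s u\|_{L^4_{t,x}(I_j)},
\end{align*}
again using $L^p$-boundedness of $\Po$. Choosing $\eta = \eta(L)$ small enough, the last term absorbs into the left side, leaving $\||\nabla|^s u\|_{S(I_j)} \leq C\|u(t_j)\|_{\dot H^s}$ on each subinterval.

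Finally, since the $C^0_t L^2_x$ piece of $S(I_j)$ controls $\|u(t_{j+1})\|_{\dot H^s}$, iterating over the at most $J \lesssim (L/\eta(L))^4$ subintervals (running outward from $t_0$ in both time directions) yields the geometric bound $\||\nabla|^s u\|_{S(I)} \leq C(L)\|u(t_0)\|_{\dot H^s}$, as claimed. I do not anticipate a serious obstacle: the only point requiring any care is ensuring that the fractional chain rule is applied with constants uniform in $0 \leq s \leq 1$, and that the bounds on $\Po$ (and its commutation with $|\nabla|^s$) are uniform in the symbol, which is granted by the hypothesis on $\Po$.
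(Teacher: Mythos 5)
Your proposal is correct and follows essentially the same route as the paper's proof: subdivide $I$ into $O(1+L^4/\eta^4)$ intervals of small $L^4_{t,x}$ norm, apply Strichartz and the fractional chain rule on each piece, absorb the $\eta^2\||\nabla|^s u\|$ term, and iterate from $t_0$. The only cosmetic difference is that you track the $L^4_{t,x}$ norm of $|\nabla|^s u$ explicitly on the left-hand side, whereas the paper bounds it by $\||\nabla|^s u\|_{S(I_j)}$ via interpolation between the two components of the $S$ norm.
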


\begin{proof} Let $\eta$ be a small constant to be chosen later depending only on Strichartz constants. Divide $I$ into $J=O\bigl (1+\frac{L^4}{\eta^4}\bigr)$ many intervals such that
on each subinterval $I_j=[t_{j-1},t_j]$ we have
\begin{align*}
\|u\|_{L_{t,x}^4(I_j\times\R^2)}\leq \eta.
\end{align*}
On each subinterval we apply the Strichartz estimate and the fractional chain rule to obtain
\begin{align*}
\||\nabla|^su\|_{S(I_j)}&\lsm \|u(t_{j-1})\|_{\dot H^s}+\||\nabla|^s\Po F(\Po u)\|_{\atr}\\
&\lsm \|u(t_{j-1})\|_{\dot H^s}+\|u\|_{L_{t,x}^4(I_j\times\R^2)}^2 \||\nabla|^s u\|_{L_{t,x}^4(I_j\times\R^2)}\\
&\lsm \|u(t_{j-1})\|_{\dot H^s}+\eta^2\||\nabla|^su\|_{S(I_j)}.
\end{align*}
Choosing $\eta$ small enough we get
\begin{align*}
\||\nabla|^su\|_{S(I_j)}\lesssim \|u(t_{j-1})\|_{\dot H^s}.
\end{align*}
Iterating this process $J$ many times we derive the claim.
\end{proof}

As a consequence of Lemma \ref{lm:perpo}, we have

\begin{lem}[Persistence of low-frequency localization]\label{lm:per}
Given any $M>0$ and $L>0$, there is a threshold $c(M,L)>0$ so that the following holds:  If $u:I\times \R^2\to \C$ is a solution to \eqref{eq:1} satisfying
\begin{align*}
\|u\|_{L^\infty_t L^2_x\ir}\le M,\quad \|u\|_{\ftr}\le L, \qtq{and}  \|P_{>N} u(t_0)\|_2\le \eta \leq c(L,M)
\end{align*}
for some dyadic number $N$ and some $t_0\in I$, then
\begin{align*}
\|P_{>\frac N\eta} u\|_{S(I)}\le C(M, L) \eta.
\end{align*}
\end{lem}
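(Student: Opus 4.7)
The plan is to deduce the lemma from the preceding two results by splitting off the low-frequency part of $u(t_0)$, building the corresponding solution, and using persistence of regularity for that auxiliary solution together with perturbation theory to control the difference.

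Concretely, I would let $v:I\times\R^2\to\C$ be the solution to \eqref{eq:1} with prescribed data $v(t_0):=P_{\le N} u(t_0)$. The initial data difference satisfies $\|u(t_0)-v(t_0)\|_{2}=\|P_{>N}u(t_0)\|_2\le \eta$, and Strichartz applied to the free evolution of this difference gives
$$
\bigl\|e^{i(t-t_0)\Delta}(u(t_0)-v(t_0))\bigr\|_{\ftr}\lsm \eta .
$$
Treating $u$ as the approximate solution (with vanishing error $e\equiv 0$) in Lemma~\ref{lm:stab} and taking $\eta\le c(M,L)$ small enough, the perturbation lemma produces $v$ on all of $I$ with
$$
\|u-v\|_{\ftr}+\|u-v\|_{S(I)}\le C(M,L)\eta \qtq{and} \|v\|_{\ftr}\le 2L .
$$

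Next I would apply Lemma~\ref{lm:perpo} to $v$ with $s=1$. Since $v(t_0)=P_{\le N}u(t_0)$, Bernstein gives $\|v(t_0)\|_{\dot H^1}\le N\|u(t_0)\|_{2}\le MN$, so the persistence-of-regularity bound yields
$$
\|\nabla v\|_{S(I)}\le C(L)\,\|v(t_0)\|_{\dot H^1}\le C(M,L)\,N .
$$
Applying Bernstein once more (pointwise in time, in both the $L^2_x$ and $L^6_x$ slots of the $S$-norm) to trade $\nabla$ for the high-frequency cutoff,
$$
\|P_{>N/\eta}\,v\|_{S(I)}\lsm (N/\eta)^{-1}\,\|\nabla v\|_{S(I)}\le C(M,L)\,\eta .
$$

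Finally I would assemble the two pieces: because $P_{>N/\eta}$ is bounded on every space appearing in $S(I)$,
$$
\|P_{>N/\eta}\,u\|_{S(I)}\le \|P_{>N/\eta}\,v\|_{S(I)}+\|P_{>N/\eta}(u-v)\|_{S(I)}\le C(M,L)\,\eta ,
$$
as required. I do not anticipate any substantive obstacle; the only point demanding care is that the threshold $\eps_0(M,M',L)$ in Lemma~\ref{lm:stab} be chosen so that it does not degenerate as $M'=\eta\to 0$, which is the case in the standard proof. All constants are ultimately packaged into $c(M,L)$ and $C(M,L)$.
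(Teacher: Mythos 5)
Your proposal is correct and follows essentially the same route as the paper: define $v$ as the solution with data $P_{\le N}u(t_0)$, invoke Lemma~\ref{lm:stab} to get $\|u-v\|_{S(I)}\le C(M,L)\eta$, invoke Lemma~\ref{lm:perpo} with $s=1$ to get $\|\nabla v\|_{S(I)}\le C(M,L)N$, and combine via Bernstein and the triangle inequality.
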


\begin{proof} Let $v$ be the solution to \eqref{eq:1} with initial condition
\begin{align*}
v(t_0)=P_{\le N} u(t_0).
\end{align*}
Then
\begin{align*}
\|u(t_0)-v(t_0)\|_2=\|P_{>N} u(t_0)\|_2\le \eta.
\end{align*}

By Lemma \ref{lm:stab}, if $\eta$ is sufficiently small depending on $M$ and $L$, the solution $v$ is defined on the whole interval $I$ and satisfies
\begin{align}\label{eq:7.1}
\|u-v\|_{S(I)}\le C(M, L) \eta.
\end{align}
On the other hand, from Lemma \ref{lm:perpo},
\begin{align}\label{eq:7.2}
\|\nabla v\|_{S(I)}\le C(M, L) \|v(t_0)\|_{\dot H^1}\le C(M, L) N.
\end{align}

Combining \eqref{eq:7.1} and \eqref{eq:7.2}, we obtain
\begin{align*}
\|P_{>\frac N\eta} u\|_{S(I)}&\le \|P_{>\frac N\eta}(u-v)\|_{S(I)}+\|P_{>\frac N\eta} v\|_{S(I)}\\
&\lsm \|u-v\|_{S(I)}+\tfrac{\eta} N\|\nabla v\|_{S(I)}\\
&\le C(M,L) \eta.
\end{align*}

This completes the proof of the lemma.
\end{proof}

\begin{lem}[Persistence of negative regularity]\label{lm:perne}
Let $u:I\times\R^2\to \C$ be a finite mass solution to \eqref{eq:1} such that
\begin{align*}
\|u\|_2\le M \qtq{and} \|u\|_{\ftr}\le L
\end{align*}
for some positive constants $M$ and $L$.  Fix $0<s<\frac12$ and assume that there exists $t_0\in I$ such that
\begin{align*}
\| P_N u(t_0)\|_2\le AN^s \qtq{for all} N\in 2^{\Z}
\end{align*}
for some constant $A>0$. Then there exists $C_0=C_0(M, L)$  such that
\begin{align}
\|P_N u\|_{S(I)}\le C_0 AN^s.\label{eq:4}
\end{align}
\end{lem}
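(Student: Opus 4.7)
The plan is to adapt the subinterval-and-Strichartz bootstrap of Lemma~\ref{lm:perpo}, but with a more delicate trilinear estimate designed to extract an explicit factor of $N^s$ rather than a mere constant. First I would fix a small $\eta>0$ (depending only on Strichartz constants) and partition $I$ into $J=O\bigl(1+L^4/\eta^4\bigr)$ consecutive subintervals $I_j=[t_{j-1},t_j]$ on which $\|u\|_{L^4_{t,x}(I_j\times\R^2)}\le\eta$; by induction on $j$, it suffices to establish \eqref{eq:4} on a single subinterval with a multiplicative loss depending only on $M$ and $L$, and then iterate across the $J$ subintervals.

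On a fixed subinterval $I_j$, Strichartz applied to $(i\partial_t+\Delta)P_N u=\alpha^4 P_N\mathcal{P} F(\mathcal{P} u)$, combined with the fact that $\mathcal{P}$ is bounded on $L^p$, yields
\begin{align*}
\|P_N u\|_{S(I_j)}\lsm \|P_N u(t_{j-1})\|_2 + \|P_N F(u)\|_{L^{4/3}_{t,x}(I_j\times\R^2)}.
\end{align*}
I would then set up a bootstrap on the quantity $X:=\sup_{N\in 2^{\Z}} N^{-s}\|P_N u\|_{S(I_j)}$ (justified to be finite by combining the global Strichartz bound $\|u\|_{S(I_j)}\lsm_{M,L}1$ with continuity in the upper endpoint of the subinterval) and aim for a trilinear bound of the form $\|P_N F(u)\|_{L^{4/3}_{t,x}(I_j)}\lsm \eta^2 X N^s$, which upon absorption into the Strichartz inequality closes the bootstrap and yields the desired one-step estimate.

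To establish the trilinear estimate, I would decompose $F(u)=|u|^2 u$ via Littlewood--Paley and split into two regimes. In the \emph{low} regime, in which all three factors have frequency $\le 8N$, Hölder together with
\begin{align*}
\|u_{\le 8N}\|_{L^4_{t,x}(I_j)}\lsm \|u_{\le 8N}\|_{S(I_j)}\le \sum_{M\le 8N} X M^s \lsm X N^s
\end{align*}
(the geometric summation being possible precisely because $s>0$) gives $\|P_N(|u_{\le 8N}|^2 u_{\le 8N})\|_{L^{4/3}_{t,x}}\lsm \eta^2 X N^s$. In the \emph{high} regime, in which at least one factor has frequency $>8N$, the requirement that the output lie at frequency $\sim N$ forces the two largest input frequencies to satisfy $N_1\sim N_2>4N$; I would treat these high-high-low configurations via the bilinear Strichartz estimate of Lemma~\ref{lm:bl2}, pairing one high-frequency factor with the low-frequency factor to obtain a gain of $(N_3/N_1)^{1/2}$, and then summing over the admissible triples $(N_1,N_2,N_3)$ using the square-function summability $\sum_M \|u_M\|_{L^4_{t,x}(I_j)}^2\lsm \eta^2$ together with the bootstrap weights $\|u_M\|_{S(I_j)}\le X M^s$.

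The main obstacle is the high-high-low estimate: one must balance the bilinear Strichartz gain $(N_3/N_1)^{1/2}$ against the two factors $N_1^s$ and $N_3^s$ forced by the bootstrap, and the hypothesis $s<1/2$ is essential, being precisely the threshold at which the bilinear gain ceases to overcome the bootstrap weights and the frequency sum over the large parameter $N_1$ becomes non-summable. Once the trilinear estimate is in hand, closing the bootstrap on each subinterval and iterating across the $J=O_{M,L}(1)$ subintervals yields the claimed bound with $C_0=C_0(M,L)$.
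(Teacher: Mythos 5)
Your time subdivision, bootstrap set-up, and treatment of the all-low-frequency regime coincide with the paper's argument; the genuine gap is in the high-high-low regime. Applying bilinear Strichartz to a pair of \emph{input} factors $(u_{N_1},u_{N_3})$ yields a gain $(N_3/N_1)^{1/2}$ that makes no reference to the output frequency $N$. After summing over $N_3\le N_2\sim N_1$ this gain is entirely consumed (and when $N_3\sim N_1$ there is no gain at all), so each dyadic block $N_1$ contributes at least $\eta\, X N_1^{s}$ --- or $\eta\, X\cdot O(1)$, depending on which single factor carries the bootstrap weight --- and the sum over $N_1>4N$ diverges for \emph{every} $s>0$. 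Thus $s<1/2$ is not the threshold for your sum; your sum never closes. A secondary problem: "balancing the gain against the two factors $N_1^s$ and $N_3^s$" means placing bootstrap weights on two factors, which makes the Duhamel term quadratic in $X$; to absorb it into the Strichartz inequality the estimate must be linear in $X$, so the weight can sit on exactly one factor, with the other two supplying only the smallness $\eta$ and an $O_M(1)$ bound.

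What is missing is a mechanism that exploits the output localization $P_N$, and the only object at frequency $N$ is the output itself. The paper achieves this by dualizing the Duhamel integral: it pairs $P_N\int_{t_0}^t e^{i(t-s)\Delta}\Po F(\Po u)\,ds$ against $g$ with $\|g\|_{N(I)}=1$ and rewrites the pairing in terms of $G(s)=\int\chi_{\{t\in I:\,t>s\}}e^{i(t-s)\Delta}g(t)\,dt$, which obeys $\|G\|_{S_*(I)}\lsm 1$ and, after applying $P_N$, lives at frequency $N$. Bilinear Strichartz applied to the pair $(P_N\Po G,\ \Po u_{N_1})$ then produces the gain $(N/N_1)^{1/2}$; the bootstrap weight is placed on the single factor $u_{N_2}$ and the third factor contributes $\|u\|_{L^4_{t,x}}\le\eta$, leading to $\sum_{N_1\ge N_2>10N}(N/N_1)^{1/2}N_2^{s}\lsm N^{s}$, which is precisely where $s<\tfrac12$ enters. (The paper also avoids your output-frequency bookkeeping entirely by using the pointwise bound $|\Po u|^3\lsm|\Po u_{lo}|^3+|\Po u_{hi}|^3$ under the duality pairing.) If you incorporate this duality step, the rest of your outline goes through.
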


\begin{proof} Subdividing the time interval $I$ as in Lemma~\ref{lm:perpo}, we can assume
\begin{align*}
\|u\|_{\ftr}\sim \eta
\end{align*}
for some small $\eta$ to be chosen later depending only on the Strichartz constant and $M$.  An application of the Strichartz inequality yields
\begin{align}\label{S1023}
\|u\|_{S_*(I)}\lsm \|u(t_0)\|_2 +\|u\|_{\ftr}^3\lsm \|u(t_0)\|_2+\eta^3.
\end{align}

By a continuity argument, it suffices to prove \eqref{eq:4} under the additional hypothesis
\begin{align}\label{eq:4.1}
\|P_N u\|_{S(I)}\le 2C_0 A N^s.
\end{align}

Using the Strichartz inequality, we estimate
\begin{align*}
\|P_N u\|_{S(I)}&\lsm \|P_N u(t_0)\|_2 +\alpha^4\biggl \|P_N\int_{t_0}^ t e^{i(t-s)\Delta}\Po F(\Po u)(s) \,ds\biggr\|_{S(I)}\\
&\lsm AN^s+ \biggl \|P_N \int_{t_0}^ t e^{i(t-s)\Delta}\Po F(\Po u) \,ds\biggr\|_{S(I)}.
\end{align*}
Choosing $C_0$ sufficiently large, we see that the first summand above is acceptable.

It remains to control the second summand.  To that end, take $\|g\|_{N(I)}=1$ and consider the pairing
\begin{align}\label{eq:5.1}\
&\Bigl \langle g, P_N\int_{t_0}^ t e^{i(t-s)\Delta}\Po F(\Po u)(s) \,ds\Bigr\rangle_{L_{t,x}^2}\notag\\
&\quad=\iint_{I\times\R^2} \overline{g(t,x)} P_N\int_{t_0}^ t e^{i(t-s)\Delta}\Po F(\Po u)(s) \,ds \,dt \,dx\notag\\
&\quad=\iint_{I\times\R^2}\Po P_N \overline{G(s,x)}\ F(\Po u)(s) \,ds \,dx,
 \end{align}
where we use the notation
$$
G(s,x):=\int \chi_{\{t\in I: t>s\}} e^{i(t-s)\Delta}g(t)\,dt.
$$
Note that by Strichartz,
\begin{align}\label{eq:5.2}
\|G\|_{S_*^0(I)}\lsm 1.
\end{align}

Now define $\uh:=P_{>10 N}u$ and $\ul:=P_{\le 10 N} u$. We have
\begin{align*}
\text{RHS}\eqref{eq:5.1}&\lsm \iint_{I\times\R^2}|\Po P_N G||\Po\ul|^3 \,ds \,dx+\iint_{I\times\R^2}|\Po P_N G| |\Po \uh|^3 \,ds \,dx.
\end{align*}
Using H\"older and \eqref{eq:4.1}, we estimate the first term on the right-hand side above by
\begin{align*}
\|\Po & P_N G\|_{\ftr}\|\Po \ul\|_{\ftr}^3\\
&\lsm \|G\|_{\ftr}\|u\|_{\ftr}^2 \sum_{K\le 10 N}\|P_K u\|_{\ftr}\\
&\lsm \eta^2 \sum_{K\le 10 N} 2C_0 A K^s\\
&\lsm \eta^2C_0 AN^s.
\end{align*}
Using Lemma \ref{lm:bl2}, \eqref{S1023}, and \eqref{eq:5.2}, we estimate the second term by a constant multiple of
\begin{align*}
\sum_{N_1\ge N_2>10 N} &\|P_N \Po G \Po u_{N_1} \Po u_{N_2}\Po\uh \|_{L_{t,x}^1\ir}\\
&\lsm \|u\|_{\ftr}\sum_{N_1\ge N_2> 10 N}\|P_N \Po G \Po u_{N_1}\|_{L_{t,x}^2\ir}\|u_{N_2}\|_{\ftr}\\
&\lsm \eta \sum_{N_1\ge N_2>10 N}\bigl (\tfrac N{N_1}\bigr)^{\frac 12 } \|P_N \Po G\|_{S_*^0(I)} \|\Po u_{N_1}\|_{S_*^0(I)} (2C_0 AN_2^s)\\
&\lsm \eta \sum_{N_1\ge N_2> 10 N}\bigl (\tfrac N{N_1}\bigr)^{\frac 12} C_0 A N_2^s\\
&\lsm \eta C_0 A N^s.
\end{align*}

Combining the bounds above we get
\begin{align*}
\text{RHS}\eqref{eq:5.1}\lsm \eta C_0 AN^s,
\end{align*}
which is acceptable if $\eta$ is chosen small enough to defeat the implicit constant. This completes the proof of the lemma.
\end{proof}

As a consequence of Lemma \ref{lm:perne} we have

\begin{lem}[Persistence of high-frequency localization]\label{lm:per2}
Given any $M>0$ and $L>0$, there is a threshold $c(M,L)>0$ so that the following holds: If $u:I\times\R^2\to \C$ is a solution to \eqref{eq:1} with
\begin{align*}
\|u\|_2\le M, \quad \|u\|_{L_{t,x}^4\ir}\le L, \qtq{and} \|P_{\le N} u(t_0)\|_2\le \eta \leq c(M,L)
\end{align*}
for some $t_0\in I$ and $N\in 2^\Z$, then
\begin{align*}
\|P_{\le \eta N} u\|_{S(I)}\le C(M, L) \eta^{\frac 13}.
\end{align*}
\end{lem}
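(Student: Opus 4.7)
My plan is to mirror the proof of Lemma \ref{lm:per}, substituting Lemma \ref{lm:perne} for Lemma \ref{lm:perpo}. First I would introduce an auxiliary solution $v:I\times\R^2\to\C$ to \eqref{eq:1} with initial data $v(t_0) = P_{>N} u(t_0)$, so that
\begin{align*}
\|u(t_0) - v(t_0)\|_2 = \|P_{\le N} u(t_0)\|_2 \le \eta.
\end{align*}
Provided $\eta \le c(M,L)$ is sufficiently small, Lemma \ref{lm:stab} (applied with $\tilde u = u$) then guarantees that $v$ exists on all of $I$ with
\begin{align*}
\|u - v\|_{S(I)} \le C(M, L)\,\eta \qtq{and} \|v\|_{S(I)} \le C(M, L).
\end{align*}

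The key observation is that $v(t_0)$ enjoys negative regularity in exactly the form required by Lemma \ref{lm:perne}. Indeed, with the spherically symmetric Littlewood--Paley multipliers chosen in Section 2, one checks from the supports that $P_K P_{>N} \equiv 0$ for every dyadic $K$ with $K \le N/2$, while the trivial bound $\|P_K v(t_0)\|_2 \le \|u(t_0)\|_2 \le M$ handles $K \ge N$. Fixing $s = 1/3$ and setting $A := 2 M N^{-1/3}$, these two cases combine to give $\|P_K v(t_0)\|_2 \le A K^{1/3}$ for all dyadic $K$. Lemma \ref{lm:perne} then yields
\begin{align*}
\|P_K v\|_{S(I)} \le C(M,L)\, A\, K^{1/3} = C(M,L)\, M\, (K/N)^{1/3}\qtq{for every dyadic} K.
\end{align*}

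Summing over $K \le \eta N$ (using the Littlewood--Paley square function for the $L^\infty_t L^2_x$ component, and the triangle inequality together with the geometric nature of the series $\sum_{K \le \eta N}(K/N)^{1/3} \lesssim \eta^{1/3}$ for the $L^3_t L^6_x$ component), I obtain
\begin{align*}
\|P_{\le \eta N} v\|_{S(I)} \le C(M, L)\, \eta^{1/3}.
\end{align*}
The conclusion then follows from the triangle inequality and $\eta \le 1$:
\begin{align*}
\|P_{\le \eta N} u\|_{S(I)} \le \|P_{\le \eta N}(u-v)\|_{S(I)} + \|P_{\le \eta N} v\|_{S(I)} \le C(M,L)\,\eta + C(M,L)\,\eta^{1/3} \le C(M, L)\,\eta^{1/3}.
\end{align*}

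The only real subtlety is the choice of the auxiliary datum $v(t_0) = P_{>N} u(t_0)$: this places the entire mass at frequencies $\gtrsim N$, which is precisely what allows us to absorb the $N^{-1/3}$ factor into the constant $A$ when repackaging as negative-regularity data for Lemma \ref{lm:perne}. Had we attempted to work directly with $u$ (rather than the auxiliary $v$), we would have had no way to exploit the localization assumption $\|P_{\le N}u(t_0)\|_2 \le \eta$ via Lemma \ref{lm:perne}; passing to $v$ converts this one-scale smallness into a genuine Besov-type bound on all dyadic scales.
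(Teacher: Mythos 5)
Your proposal is correct and follows essentially the same route as the paper: the same auxiliary solution $v$ with data $P_{>N}u(t_0)$, the same application of Lemma~\ref{lm:stab}, and the same use of Lemma~\ref{lm:perne} with $s=\tfrac13$ followed by summation of the geometric series over $K\le\eta N$. The only cosmetic difference is your explicit normalization $A=2MN^{-1/3}$ versus the paper's equivalent bound $\sup_K K^{-1/3}\|P_K v\|_{S(I)}\le C(M,L)N^{-1/3}$.
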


\begin{proof} Let $v$ be the solution to \eqref{eq:1} with initial data
\begin{align*}
v(t_0)=P_{>N} u(t_0).
\end{align*}
Then
\begin{align*}
\|u(t_0)-v(t_0)\|_2=\|P_{\le N}u(t_0)\|_2\le \eta.
\end{align*}

By Lemma \ref{lm:stab}, if $\eta$ is sufficiently small depending on $M$ and $L$, the solution $v$ is defined on the whole interval $I$ and satisfies
\begin{align*}
\|u-v\|_{S(I)}\le C(M, L) \eta \qtq{and} \|v\|_{S(I)}\le C(M, L).
\end{align*}
Moreover, by Lemma \ref{lm:perne},
\begin{align*}
\sup_{K\in 2^\Z}K^{-\frac 13}\|P_K v\|_{S(I)}&\le C(M, L) \sup_{K\in 2^\Z} K^{-\frac 13}\|P_K v(t_0)\|_2\le C(M, L) N^{-\frac 13}.
\end{align*}
We can then estimate
\begin{align*}
\|P_{\le\eta N} u\|_{S(I)}&\le \|P_{\le\eta N}(u-v)\|_{S(I)}+\|P_{\le\eta N} v\|_{S(I)}\\
&\le \|u-v\|_{S(I)}+\sum_{K\le\eta N} \|P_K v\|_{S(I)}\\
&\le C(M, L)\eta+\sum_{K\le\eta N} K^{\frac 13} C(M, L) N^{-\frac 13}\\
&\le C(M, L)\eta^{\frac 13}.
\end{align*}

This completes the proof of the lemma.
\end{proof}

\begin{lem}[Uniform space-time bound for $\nalf$] \label{lm:ub}
Suppose $u$ is the solution to $\text{NLS}_\alpha$ with initial data $u(0)=u_0$ and $\|u_0\|_2\le M$.  Then for any $0\le \alpha\le 1$, the solution $u$ is global and satisfies
\begin{align*}
\|u\|_{S(\R)}\le C(M)
\end{align*}
uniformly in $\alpha$.
\end{lem}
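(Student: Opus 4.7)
The plan is to split the proof into two regimes based on the size of $\alpha$, since neither argument alone covers all of $[0,1]$.

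\textbf{Large $\alpha$ via rescaling to Dodson.} When $\alpha\geq \alpha_0(M)$ for a threshold to be chosen, I would rescale by setting $v(t,x):=\alpha^{2} u(t,x)$. A direct computation gives
\begin{align*}
iv_t+\Delta v=\alpha^2(iu_t+\Delta u)=\alpha^{6}|u|^2 u=|v|^2 v,
\end{align*}
so $v$ solves the standard defocusing cubic NLS. Since $\alpha\le 1$, the initial data satisfies $\|v(0)\|_2=\alpha^2\|u_0\|_2\le M$, so Theorem~\ref{T:Dodson} yields $\|v\|_{L^4_{t,x}(\R\times\R^2)}\le C(M)$, and undoing the rescaling gives $\|u\|_{L^4_{t,x}}\le \alpha_0(M)^{-2}C(M)$.

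\textbf{Small $\alpha$ via perturbation off the linear flow.} When $\alpha\le \alpha_0(M)$, I would use $\tilde u(t):=e^{it\Delta}u_0$ as an approximate solution of $\mathrm{NLS}_\alpha$. Strichartz gives $\|\tilde u\|_{L^\infty_t L^2_x}\le M$ and $\|\tilde u\|_{L^4_{t,x}}\lesssim M$, while the error $e=-\alpha^4 F(\tilde u)$ satisfies
\begin{align*}
\Bigl\|\int_0^t e^{i(t-s)\Delta}e(s)\,ds\Bigr\|_{S(\R)} \lesssim \|e\|_{L^{4/3}_{t,x}}\lesssim \alpha^4 \|\tilde u\|_{L^4_{t,x}}^3\lesssim \alpha^4 M^3.
\end{align*}
Choosing $\alpha_0(M)$ small enough that $\alpha_0(M)^4 M^3$ is below the threshold $\eps_0(M,0,CM)$ from Lemma~\ref{lm:stab} (with $M'=0$) produces a genuine solution $u$ on all of $\R$ with $\|u-\tilde u\|_{S(\R)}\le C(M)$, and hence $\|u\|_{S(\R)}\le C(M)$.

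\textbf{Combining the two regimes.} The two cases together yield a uniform bound $\|u\|_{L^4_{t,x}}\le C(M)$ for every $\alpha\in[0,1]$. To upgrade from $L^4_{t,x}$ to the full $S(\R)$ norm, I would invoke mass conservation for $\mathrm{NLS}_\alpha$ (which is immediate from the Hamiltonian structure) to control $\|u\|_{L^\infty_t L^2_x}\le M$, then partition $\R$ into $O_M(1)$ subintervals on each of which $\|u\|_{L^4_{t,x}}\le \eta$ for a small $\eta$ depending only on the Strichartz constant. Applying the Strichartz inequality to the Duhamel formula on each subinterval gives $\|u\|_{S(I_j)}\lesssim M+\eta^2\|u\|_{S(I_j)}$, hence $\|u\|_{S(I_j)}\lesssim M$, and summing yields the global bound.

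\textbf{Main obstacle.} The rescaling alone is insufficient because the factor $\alpha^{-2}$ blows up as $\alpha\to 0$; symmetrically, direct perturbation off the linear flow fails for $\alpha$ of order one since $\alpha^4 M^3$ is not small. The balance between these two mechanisms at the dividing threshold $\alpha_0(M)$ is the only delicate point, and it is resolved purely by bookkeeping once $\alpha_0(M)$ is chosen as above.
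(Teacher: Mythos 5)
Your proof is correct and follows essentially the same strategy as the paper: split at a threshold $\alpha_0(M)$, treat small $\alpha$ by exploiting the smallness of the coupling constant $\alpha^4$, and treat $\alpha\in[\alpha_0,1]$ via the rescaling $u\mapsto\alpha^2 u$ reducing to Theorem~\ref{T:Dodson}. The only (immaterial) difference is that in the small-$\alpha$ regime the paper runs a direct continuity argument on the Strichartz inequality $\|u\|_{S(\R)}\lesssim \|u_0\|_2+\alpha^4\|u\|_{S(\R)}^3$, whereas you invoke Lemma~\ref{lm:stab} with the free evolution as approximate solution; both are valid.
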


\begin{proof}
From the standard local theory, we only need to prove the claim as an \emph{a priori} estimate.  By Strichartz,
\begin{align*}
\|u\|_{S(\R)}\lsm \|u_0\|_2+\alpha^4\||u|^2 u\|_{L_{t,x}^{\frac 43}(\R\times\R^2)}
&\lsm \|u_0\|_2+\alpha^4\|u\|_{S(\R)}^3.
\end{align*}

Using a continuity argument, we see that there exists $0<\alpha_0(M)\ll 1$ such that when $\alpha\le \alpha_0$, the solution $u$ to $\text{NLS}_\alpha$ is global and satisfies
\begin{align*}
\|u\|_{S(\R)}\lsm M.
\end{align*}

It remains to discuss those $\alpha$ belonging to the compact interval $[\alpha_0, 1]$.  It is easy to check that if $u$ is a solution to $\text{NLS}_\alpha$, then $\alpha^2 u$ solves NLS with data $\alpha^2 u_0$.  Therefore,
\begin{align*}
\|u\|_{S(\R)}=\tfrac 1{\alpha^2} \|\alpha^2 u\|_{S(\R)}\le \tfrac 1{\alpha^2}\mathcal L(M),
\end{align*}
where
\begin{align*}
\mathcal L(M):=\sup \{\|u\|_{S(I)}\}
\end{align*}
and the supremum is taken over all time intervals $I$ and all solutions $u:I\times\R^2\to \C$ with $M(u)\le M$.  The claim now follows from Theorem~\ref{T:Dodson}, which implies that $\mathcal L(M)$ is finite.
\end{proof}

\section{Global well-posedness and scattering for $\pds$}\label{S:4}

In this section we prove global well-posedness and scattering for
\begin{align*}
(\pds): \begin{cases} iu_t+\Delta u=\pd F(\pd u),\\
u(0,x)=u_0(x)\in  L_x^2.
\end{cases}
\end{align*}
Here $\pd$ is a Fourier multiplier with symbol $m_D(\xi)$ defined as follows: Let $\varphi:\R^2\to[0,1]$ be the bump function used in the definition of Littlewood--Paley projections.  For $1\leq D\in 2^\Z$ we define
\begin{align}\label{m_D definition}
m_D(\xi) &:=\frac1{\log_2(2D)} \sum_{N\ge 1}^D \varphi(\xi/N) \\
	&= \varphi(\xi) +\sum_{N\ge 2}^D \biggl[\frac{\log_2(2D)-\log_2 (N)}{\log_2(2D)}\biggr]\bigl[\varphi(\xi/N) - \varphi(2\xi/N)\bigr]. \notag
\end{align}
It is easy to check that the symbol $m_D(\xi)$ satisfies
\begin{align}\label{mdbound1}
0\le m_D(\xi)\le 1\qtq{and}
\begin{cases}
m_D(\xi)=1, \ &\text{if } |\xi|\le \frac 12,\\
m_D(\xi)=0, \ &\text{if } |\xi|>2D.
\end{cases}
\end{align}
It is also easy to verify that $\pd$ is a Mikhlin multiplier uniformly for $D\geq 1$.    Moreover, for any number $k\ge 1$,
\begin{align}\label{mdbound2}
|m_D(\xi)-m_D(k\xi)|\lsm \frac{\log_2 (k)}{\log_2 (D)}.
\end{align}

Both the local theory and the small-data global theory for $\pds$ are contained in Lemma~\ref{lm:loc}.   Our goal for this section is to prove the following large-data global result:

\begin{thm}[Scattering for $\pds$]\label{thm:scapds} Given $M>0$, there are constants $C(M)$ and $D_0(M)$ so that the following holds:  For any $u_0\in L^2(\R^2)$ with $\|u_0\|_2\le M$ and any $D\ge D_0(M)$, there exists a unique global solution $u$ to $\pds$; moreover,
\begin{align}\label{E:thm:scapds}
\|u\|_{S(\R)}\le C(M).
\end{align}
In particular, there exist $u_\pm \in L^2(\R^2)$ such that
\begin{align*}
\lim_{t\to \pm \infty}\|u(t)-e^{it\Delta}u_{\pm}\|_2=0.
\end{align*}
\end{thm}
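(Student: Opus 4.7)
The plan is to establish \eqref{E:thm:scapds} by an induction on mass. Define
\[
\mathcal L(M) := \sup\bigl\{ \|u\|_{S(\R)} : u \text{ solves } P_D\text{NLS with } \|u(0)\|_2 \le M \text{ and } D \ge D_0(M)\bigr\},
\]
and show by induction on $M$ (in increments small compared to the small-data threshold $\eps_0$ from Lemma~\ref{lm:loc}) that $\mathcal L(M) < \infty$ with a suitable $D_0(M)$. The base case is trivial from Lemma~\ref{lm:loc}. For the induction step, I would partition the analysis by a frequency dichotomy into a one-bubble regime and a two-bubble regime, following the Bourgain-style scheme advertised in the introduction.

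\textbf{One-bubble regime.} Suppose $\|u(0)\|_2 \le M$ and that the data is essentially concentrated in a single dyadic annulus $\{|\xi|\sim N_0\}$ (precise statement: $\|P_{\le N_0/K}u(0)\|_2+\|P_{\ge KN_0}u(0)\|_2 < \eta$ for some large $K = K(M)$ and small $\eta = \eta(M)$). Set $\alpha^4 := m_D(N_0)$ and let $v$ be the solution of $\text{NLS}_\alpha$ with data $P_{N_0/K<\cdot\le KN_0}u(0)$. By Lemma~\ref{lm:ub}, $\|v\|_{S(\R)} \le C(M)$. Using the persistence-of-localization lemmas (Lemmas~\ref{lm:per} and~\ref{lm:per2}), $v$ remains concentrated in an enlarged annulus $\{|\xi|\in [\eta N_0/K,\,KN_0/\eta]\}$ up to a tail of mass $O_\eta(1)$. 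On this annulus \eqref{mdbound2} yields
\[
\bigl\|\,[m_D(\xi)-\alpha^4]\,\hat v(t,\xi)\bigr\|_{\ell^\infty(\text{annulus})} \lsm \tfrac{\log(K/\eta)}{\log D},
\]
which, combined with the space-time bound on $v$, shows that $v$ solves $P_D$NLS up to an error whose Strichartz norm is $\lsm C(M)\log(K/\eta)/\log D$. Choosing $D_0(M)$ large enough (depending on $K(M)$ and $\eta(M)$) that this error is smaller than the perturbation threshold from Lemma~\ref{lm:stab}, we conclude $\|u\|_{S(\R)} \le C(M)$. This one-bubble estimate is Proposition~\ref{prop:onebubble}.

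\textbf{Two-bubble regime.} Otherwise, a pigeonholing argument produces a dyadic scale $N_*$ such that both $u_{\text{lo}} := P_{\le N_*}u(0)$ and $u_{\text{hi}} := P_{>N_*}u(0)$ carry mass bounded away from $M$ by a gap $\delta(M) > 0$; hence each has mass $\le M - \delta$. The induction hypothesis applies to each: the solutions $v_{\text{lo}}, v_{\text{hi}}$ of $P_D$NLS with data $u_{\text{lo}}, u_{\text{hi}}$ satisfy $\|v_{\text{lo}}\|_{S(\R)}+\|v_{\text{hi}}\|_{S(\R)} \le C(M-\delta) = C'(M)$. Taking $v := v_{\text{lo}}+v_{\text{hi}}$ as an approximate solution, the equation error is the collection of cross-terms in $\mathcal P_D F(\mathcal P_D v) - \mathcal P_D F(\mathcal P_D v_{\text{lo}}) - \mathcal P_D F(\mathcal P_D v_{\text{hi}})$, each involving at least one factor of $v_{\text{lo}}$ and one of $v_{\text{hi}}$. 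After using the persistence-of-localization lemmas to upgrade the frequency separation of $v_{\text{lo}}$ and $v_{\text{hi}}$ globally in time (up to tails), I would control these mixed terms via the bilinear Strichartz estimate of Lemma~\ref{lm:bl2}, which yields a decay factor $(N_{\text{lo}}/N_{\text{hi}})^{1/2}$ summing over dyadic pairs. This renders the error small — smaller than the threshold in Lemma~\ref{lm:stab}, provided $\eta$ and $1/D$ are small enough — and the perturbation lemma upgrades $v$ to a true solution with $\|u\|_{S(\R)} \le C''(M)$. This is Proposition~\ref{prop:twobubble}.

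\textbf{Main obstacle and conclusion.} The hardest point is the two-bubble step: the frequency separation of $v_{\text{lo}}$ from $v_{\text{hi}}$ must be maintained globally in time with quantitative tails, and the cross-interactions summed over all dyadic pairs must be absorbed into the small perturbation regime \emph{without} any monotonicity formula. The bilinear Strichartz estimate and the persistence-of-localization lemmas proved in Section~\ref{S:3} are precisely tailored to this; together with the slow variation \eqref{mdbound2} of $m_D$, they allow the induction to close. Iterating the dichotomy finitely many times (bounded in terms of $M/\delta(M)$) until each remaining piece lies in the one-bubble regime, and selecting $D_0(M)$ as the maximum of all thresholds produced along the way, yields the global space-time bound \eqref{E:thm:scapds}. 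Scattering in $L^2$ then follows from the standard Duhamel argument, using the Strichartz estimate and the $L^p$-boundedness of $\mathcal P_D$ to show that $\{e^{-it\Delta}u(t)\}$ is Cauchy in $L^2$ as $t\to\pm\infty$.
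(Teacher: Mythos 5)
Your overall architecture---induction on mass with the small-data theory of Lemma~\ref{lm:loc} as base case, a one-bubble/two-bubble dichotomy, approximation by $\nalf$ via Lemma~\ref{lm:ub} in the concentrated case, and perturbation theory plus bilinear estimates in the delocalized case---is exactly the paper's scheme. However, your one-bubble step has a genuine gap. The error is $e=\alpha^4F(v)-\pd F(\pd v)$. The \emph{inner} mismatches $\pd v-\alpha v$ are indeed controlled by \eqref{mdbound2} on the annulus together with persistence of localization. But the \emph{outer} mismatch $(\alpha-\pd)$ acts on $\alpha^3F(v)$, whose Fourier support is not confined to the annulus: the cubic nonlinearity of a function supported on $\{|\xi|\sim N_0\}$ has $O(1)$ content at frequencies near zero (take $\xi_1-\xi_2+\xi_3\approx 0$ with all $\xi_i$ on the annulus), and there $m_D\equiv 1$ while $\alpha$ may be far from $1$, so the multiplier difference is $O(1)$, not $O(\log(K/\eta)/\log D)$; moreover $\|\pl F(v)\|_{N(\R)}\lsm\|v\|_{L^3_tL^6_x}^3\sim C(M)^3$ admits no smallness from static support considerations. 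The paper's fix is a further dichotomy in $\alpha:=m_D(N_0\eta_0)$. When $\alpha\le\eta_0^{1/6}$ the whole nonlinear term carries a factor $\alpha^3$ (and $\|\pd\pmd\|_{L^2\to L^2}\le\alpha$ since $m_D$ is decreasing), so one approximates by the \emph{free} flow. When $\alpha>\eta_0^{1/6}$ one rewrites the Duhamel integral of the error as $\tfrac1\alpha(\alpha-\pd)G$, where $G$ is the Duhamel term of $v$, and controls $\pl G$ and $\ph G$ by $\pl v$, $\ph v$ via Lemmas~\ref{lm:per} and~\ref{lm:per2}---a dynamical fact about the solution, not its initial frequency support---at the price of the factor $1/\alpha$, which is precisely why the split in $\alpha$ is unavoidable. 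Your proposal supplies no mechanism for this low-frequency output of the nonlinearity.

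Two secondary points. First, in the two-bubble step the separated cross terms cannot be summed using Lemma~\ref{lm:bl2} alone: bilinear Strichartz requires the $S^0_*$ norm of each dyadic piece $P_N\uh$, and $\|P_N(\pd F(\pd\uh))\|_{N(I)}$ has no decay in $N$, so the dyadic sum diverges. The paper instead uses the pigeonholed mass-sparse annulus to create a genuine gap, the persistence of positive/negative regularity bounds of Lemmas~\ref{lm:perpo} and~\ref{lm:perne}, and Tao's bilinear restriction estimate (Lemma~\ref{lm:bl1}, which needs only the $S_*$ norms of the full solutions) for the term with separated supports; Lemma~\ref{lm:bl2} enters only inside the proof of Lemma~\ref{lm:perne}. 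Second, a small bookkeeping error: with the paper's convention that $\nalf$ carries coupling $\alpha^4$ and $\pd$ acts four times on the nonlinearity, the matching choice is $\alpha=m_D(N_0\eta_0)$, not $\alpha^4=m_D(N_0)$.
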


Inspired by Bourgain's induction on energy method \cite{Bourg:JAMS}, we will prove this theorem by inducting on the mass.  For well-posedness problems, the original induction argument has been mostly supplanted by the application of concentration compactness techniques in the style of \cite{KenigMerle:H1}; however, that type of argument is ill-suited to the current problem.  While we will develop a nonlinear profile decomposition in the next section, this will be for $D$ fixed.  A concentration-compactness-style proof of Theorem~\ref{thm:scapds} would require such a decomposition for a general sequence $D_n\to\infty$, leading to a significant proliferation of cases.  Moreover, one would still be left to prove space-time bounds for the individual profiles, which is not significantly simpler than just proving space-time bounds for general solutions as we do in this section.  Note that the principal advantage of solutions in the form of a single profile appears when using (localized) multiplier identities, such as monotonicity formulae; no such identities will be used in proving Theorem~\ref{thm:scapds}.  We contend that the arguments that follow are very robust and provide a general method for transferring well-posedness from a dispersive equation to Fourier truncated variants.

The proof of Theorem \ref{thm:scapds} is built on Lemma \ref{lm:loc} together with the following two propositions:

\begin{prop}[Frequency-localized data obeys bounds]\label{prop:onebubble}
Given $M>0$ there exists $\eta_0(M)>0$ so that for any $\eta_1>0$ there exists $D_1=D_1(M,\eta_1)$ so that the following holds:  Suppose $u_0\in L^2$ satisfies $\|u_0\|_2\le M$ as well as
\begin{align*}
\|P_{\le \frac{N_0}2} u_0\|_2\le 2\eta_0 \qtq{and} \|P_{>\frac {N_0}{\eta_1}} u_0\|_2\le 2\eta_0 \quad \text{for some $N_0\in 2^\Z$}.
\end{align*}
Then for any $D\ge D_1$, there is a global solution $u$ to $\pds$.  Moreover,
\begin{align*}
\|u\|_{S(\R)}\le C(M),
\end{align*}
with the right-hand side independent of $N_0$, $u_0$, $D$, and $\eta_1$.
\end{prop}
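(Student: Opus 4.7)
The strategy is to approximate solutions of $\pds$ by solutions of $\nalf$ with coupling constant $\alpha:=m_D(N_0)$, for which Lemma~\ref{lm:ub} already supplies uniform space-time bounds depending only on the mass. The frequency concentration of $u_0$ forces the approximate solution $\tilde u$ to stay concentrated near frequency $N_0$, and on that range the multiplier $\pd$ is nearly constant, equal to $\alpha$. The gap between the two equations is logarithmic in $D$, so becomes negligible as $D\to\infty$, at which point Lemma~\ref{lm:stab} converts $\tilde u$ into a true solution of $\pds$ with the desired bounds.

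Concretely, let $\tilde u$ be the global solution of $\nalf$ with $\tilde u(0)=u_0$. Lemma~\ref{lm:ub} gives $\|\tilde u\|_{S(\R)}\le C(M)$ uniformly in $\alpha\in[0,1]$. Choose $\eta_0=\eta_0(M)$ small enough that $2\eta_0\le c(M,C(M))$ in Lemmas~\ref{lm:per} and \ref{lm:per2}; applying those lemmas, together with the hypotheses on $u_0$, yields
\begin{align*}
\|P_{\le \eta_0 N_0}\tilde u\|_{S(\R)}\lesssim C(M)\eta_0^{1/3} \qquad\text{and}\qquad \|P_{>N_0/(2\eta_0\eta_1)}\tilde u\|_{S(\R)}\lesssim C(M)\eta_0.
\end{align*}
Thus $\tilde u$ is essentially Fourier-concentrated on the dyadic region $\mathcal A:=\{\eta_0 N_0<|\xi|\le N_0/(2\eta_0\eta_1)\}$, on which \eqref{mdbound2} implies $|m_D(\xi)-\alpha|\le \delta$ with $\delta\lesssim \log_2(1/(\eta_0^2\eta_1))/\log_2(D)$; this $\delta$ can be made arbitrarily small by choosing $D$ large.

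To check that $\tilde u$ is an approximate solution of $\pds$, we must bound $e := \alpha^4 F(\tilde u)-\pd F(\pd \tilde u)$ in $N(\R)$. Writing $\tilde u=v+w$, with $v$ the Littlewood--Paley projection of $\tilde u$ onto $\mathcal A$, the previous step gives $\|w\|_{S(\R)}\lesssim C(M)\eta_0^{1/3}$, and the contribution of $w$ to $e$ is then bounded in $N(\R)$ by $C(M)^2\eta_0^{1/3}$ via trilinear H\"older/Strichartz and the $L^p$-boundedness of $\pd$. For the remaining piece we use the algebraic identity
\begin{align*}
\alpha^4 F(v)-\pd F(\pd v) = -\alpha^3(\pd-\alpha I)F(v) \;-\; \pd\bigl[F(\pd v)-\alpha^3 F(v)\bigr].
\end{align*}
The bracket on the right, via $F(z)-F(w)=O(|z-w|(|z|+|w|)^2)$, reduces to cubic expressions containing one factor of $(\pd-\alpha I)v$, which has $L^4$-norm $\lesssim \delta\|v\|_{L^4}$ since $v$ is Fourier-supported in $\mathcal A$. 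For $(\pd-\alpha I)F(v)$ one further decomposes $F(v)=P_{\mathcal A}F(v)+(1-P_{\mathcal A})F(v)$: on $\mathcal A$ the symbol of $\pd-\alpha I$ is bounded by $\delta$, while the off-$\mathcal A$ piece is controlled by bilinear Strichartz (Lemma~\ref{lm:bl2}) applied to the two factors of $v$ whose frequencies must be well-separated whenever their interaction produces an output outside $\mathcal A$. Summing all contributions yields $\|e\|_{N(\R)}\lesssim C(M)^3(\eta_0^{1/3}+\delta)$.

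With this error bound in hand, first fix $\eta_0=\eta_0(M)$ so that $C(M)^3\eta_0^{1/3}$ is below the perturbation threshold $\eps_0(M,0,C(M))$ of Lemma~\ref{lm:stab}; then, given $\eta_1>0$, choose $D_1=D_1(M,\eta_1)$ large enough that $C(M)^3\delta$ lies below the same threshold for all $D\ge D_1$. Lemma~\ref{lm:stab} then produces a global solution $u$ of $\pds$ with $u(0)=u_0$ and $\|u\|_{S(\R)}\le C(M)$, as required. The principal technical obstacle is the off-annulus component $(1-P_{\mathcal A})F(v)$: since $m_D-\alpha$ is $O(1)$ on low frequencies, smallness there cannot come from the multiplier, and must instead be harvested from the bilinear structure of $F(v)=|v|^2 v$, using that low-frequency output of such a product requires near-cancellations which force at least two of the three factors of $v$ to have well-separated frequencies.
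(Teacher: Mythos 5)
Your overall strategy --- approximate by $\nalf$ with $\alpha$ the value of $m_D$ near $N_0$, invoke Lemma~\ref{lm:ub} for uniform bounds, use Lemmas~\ref{lm:per} and~\ref{lm:per2} to keep the approximate solution frequency-localized, exploit \eqref{mdbound2} on the annulus, and close with Lemma~\ref{lm:stab} --- coincides with the paper's argument in its main case. But there is a genuine gap exactly at the point you yourself flag as ``the principal technical obstacle'': the term $(\pd-\alpha)(1-P_{\mathcal A})F(v)$. The claim that low-frequency output of $|v|^2v$ forces two of the three factors to have frequencies separated in a sense exploitable by Lemma~\ref{lm:bl2} is not correct. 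Take input frequencies $\xi_1=(N,0)$, $\xi_3=(0,N)$, $\xi_2=(N,N)$, all of magnitude $\sim N$ and all in $\mathcal A$; then $\xi_1-\xi_2+\xi_3=0$, so the output sits at frequency zero while every factor lives at the same dyadic scale. Lemma~\ref{lm:bl2} gains only from a gap between the dyadic magnitudes of its two inputs, and Lemma~\ref{lm:bl1} with separation $cN$, $c\sim 1$, carries no small constant (the $N^{2-4/q}$ loss is repaid exactly, with no net gain, when the two factors have comparable frequency). Since $m_D(\xi)-\alpha$ is genuinely $O(1)$ at frequencies far below $\mathcal A$ (for instance $m_D\equiv1$ for $|\xi|\le\frac12$, while $\alpha$ may be anywhere in $[0,1]$ when $N_0$ lies in the transition region of the symbol), this contribution to the error is not shown to be small, and no purely space-time trilinear estimate on $F(v)$ of the kind you propose will produce the smallness.

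The paper sidesteps estimating $P_{lo}F(v)$ in a dual Strichartz norm altogether: it writes the relevant Duhamel contribution as $\tfrac1\alpha(\alpha-\pd)G$ with $G$ essentially equal to $v-e^{it\Delta}v(0)$, and then uses Lemmas~\ref{lm:per} and~\ref{lm:per2} to show that the \emph{solution} $v$ --- hence $G$ --- has small low- and high-frequency tails in $S(\R)$. The price is the factor $\tfrac1\alpha$, and this is precisely why the paper splits into two cases: when $\alpha\le\eta_0^{1/6}$ the multiplier essentially annihilates the nonlinearity (since $m_D$ is decreasing, $\|\pd\pmd\|_{L^2\to L^2}\le\alpha$) and one approximates by the \emph{free} evolution instead; when $\alpha>\eta_0^{1/6}$ the loss $\alpha^{-1}\le\eta_0^{-1/6}$ is absorbed by the gain $\eta_0^{1/3}$ from the persistence lemmas. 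Your single-case argument avoids dividing by $\alpha$ only because it attempts the direct estimate that does not go through; once the low-frequency term is repaired by routing it through the Duhamel integral and the persistence lemmas, the $\alpha^{-1}$ reappears and the case distinction becomes necessary. The remaining pieces of your error analysis --- the term $\pd\bigl[F(\pd v)-\alpha^3F(v)\bigr]$ and the on-annulus part of $(\pd-\alpha)F(v)$ --- are sound and match the paper's treatment of $e_2$ and of $(\alpha-\pd)\pmd G$.
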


\begin{prop}[Frequency-delocalized data inherits bounds]\label{prop:twobubble} Let $M>0$ and $\eta_0>0$ be given and suppose Theorem~\ref{thm:scapds} holds up to mass $M-\eta_0$, that is, there are constants $D_2$ and $B$ so that all solutions $u$ to $\pds$ with  $D\geq D_2$ and  $\|u\|_{2}\leq M-\eta_0$ can be extended globally in time and obey $ \|u\|_{S(\R)} \leq B$.
Then, there exists a constant $\eta_1=\eta_1(M, B)$ so that the following holds: If $u_0\in L^2$ satisfies $\|u_0\|_2\le M$ and
\begin{align*}
\|P_{\le N_0}u_0\|_2\ge \eta_0 \qtq{and} \|P_{>\frac{N_0}{\eta_1}}u_0\|_2\ge \eta_0 \quad \text{for some $N_0\in 2^\Z$},
\end{align*}
then, for any $D\geq D_2$ there exists a unique global solution $u$ to $\pds$ with initial data $u_0$; moreover,
\begin{align*}
\|u\|_{S(\R)}\le C(M,B).
\end{align*}
\end{prop}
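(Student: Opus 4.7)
The plan is to split the initial data into well-separated low- and high-frequency pieces, solve $\pds$ for each using the inductive hypothesis, and then show the sum closely approximates the solution with initial data $u_0$ via perturbation theory. Fix an intermediate scale $N_* := N_0 \eta_1^{-1/2}$ and set $v_0 := P_{\le N_*} u_0$, $w_0 := P_{>N_*} u_0$. By $L^2$-orthogonality of Littlewood--Paley projections together with the hypotheses $\|P_{\le N_0} u_0\|_2 \ge \eta_0$ and $\|P_{>N_0/\eta_1} u_0\|_2 \ge \eta_0$, we obtain
\begin{align*}
\|v_0\|_2^2 \le M^2 - \eta_0^2 \qquad\text{and}\qquad \|w_0\|_2^2 \le M^2 - \eta_0^2,
\end{align*}
so both fall strictly below $M$ by an amount depending only on $M$ and $\eta_0$. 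Invoking the inductive hypothesis (adjusting the inductive gap by a factor depending on $M$ if needed) produces global solutions $v$ and $w$ to $\pds$ with initial data $v_0, w_0$, both obeying $\|v\|_{S(\R)}, \|w\|_{S(\R)} \le B$.

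Next, I would use the persistence-of-frequency-localization lemmas to show that $v$ and $w$ remain essentially separated in frequency throughout their evolutions. Applying Lemma~\ref{lm:per} to $v$ with the small threshold $\eta_1^{1/4}$ yields $\|P_{> N_* \eta_1^{-1/4}} v\|_{S(\R)} \lesssim_{M,B} \eta_1^{1/4}$, and symmetrically Lemma~\ref{lm:per2} gives $\|P_{\le N_* \eta_1^{1/4}} w\|_{S(\R)} \lesssim_{M,B} \eta_1^{1/12}$; so the main parts of $v$ and $w$ live at frequencies separated by a factor of $\eta_1^{-1/2}$. Setting $\tilde u := v + w$, a direct computation gives
\begin{align*}
(i\partial_t + \Delta)\tilde u - \pd F(\pd \tilde u) = -\pd\bigl[F(\pd v + \pd w) - F(\pd v) - F(\pd w)\bigr] =: -e,
\end{align*}
and the integrand of $e$ consists of trilinear cross terms (schematically $(\pd v)^2 \pd w$, $\pd v (\pd w)^2$ and conjugates), each having at least one factor of $\pd v$ and one of $\pd w$. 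I would control $\|e\|_{N(\R)}$ by Littlewood--Paley decomposing each factor, pairing a low-frequency dyadic piece of $\pd v$ with a high-frequency dyadic piece of $\pd w$ via the bilinear Strichartz estimate (Lemma~\ref{lm:bl2}), which delivers a gain of $(\eta_1^{-1/2})^{-1/2} = \eta_1^{1/4}$ from the ratio of frequency scales. Tails falling on the "wrong" side of the split are handled using the persistence bounds above, while the multiplier $\pd$ is absorbed through its uniform Mikhlin bounds.

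Since $u_0 - \tilde u(0) = 0$ and $\|\tilde u\|_{L^\infty_t L^2_x \cap L^4_{t,x}(\R\times\R^2)} \lesssim B$, choosing $\eta_1 = \eta_1(M,B)$ small enough forces the resulting bound on $\|\int_{0}^{t} e^{i(t-s)\Delta} e(s)\,ds\|_{S(\R)}$ below the smallness threshold $\eps_0(M, B)$ appearing in Lemma~\ref{lm:stab}. The perturbation theory then produces a unique global solution $u$ to $\pds$ with initial data $u_0$ obeying $\|u\|_{S(\R)} \le C(M, B)$. The main obstacle is the careful bookkeeping in the trilinear error estimate: although the cross terms nominally feature a low/high pairing, the multiplier $\pd$ does not respect sharp frequency cutoffs and the scale-invariant ambient space is $L^2$, so one must combine multiple bilinear Strichartz pairings with the persistence estimates to extract genuine smallness in $\eta_1$ from a cubic nonlinearity measured in $N(\R)$, and do so without allowing the implicit constants to feed back into the induction in an uncontrolled way.
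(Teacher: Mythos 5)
Your overall architecture (split the data in frequency, solve each piece by the inductive hypothesis, recombine via Lemma~\ref{lm:stab}) matches the paper's, but there is a genuine gap at the very first step: you split at a \emph{single, fixed} scale $N_*=N_0\eta_1^{-1/2}$ without first checking that $u_0$ has little mass near that scale. The hypotheses of the proposition only force $\eta_0$ of mass below $N_0$ and $\eta_0$ of mass above $N_0/\eta_1$; the remaining $O(1)$ mass of $u_0$ could sit entirely in the single dyadic block at frequency $N_*$. In that case $v$ and $w$ each carry an $O(1)$ bubble at frequency $\sim N_*$, and the cross term $|\pd v|^2\,\pd w$ evaluated at comparable frequencies is $O(1)$ in $N(\R)$ --- no smallness is available. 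Your attempted rescue via the persistence lemmas does not close this: Lemma~\ref{lm:per} applied to $v$ controls only $P_{>N_*\eta_1^{-1/4}}v$ and Lemma~\ref{lm:per2} applied to $w$ controls only $P_{\le N_*\eta_1^{1/4}}w$, so the ``main parts'' of $v$ and $w$ are \emph{not} separated by $\eta_1^{-1/2}$ as you claim --- their uncontrolled frequency ranges overlap in the whole band $[N_*\eta_1^{1/4},\,N_*\eta_1^{-1/4}]$, and bilinear Strichartz (Lemma~\ref{lm:bl2}) gives a gain $(M/N)^{1/2}$ that vanishes precisely when the two factors sit at comparable frequencies.

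The missing idea is a pigeonhole step. The paper packs $\eps^{-2}$ disjoint windows $[\eps^2N_j,\eps^{-2}N_j]$ into $[N_0,N_0/\eta_1]$ (this is what forces $\eta_1^{-1}>\eps^{-2}+\eps^{-4\eps^{-2}}$ and is the only place $\eta_1$ is used quantitatively) and selects a window in which $\|P_{\eps^2N_j\le\cdot<\eps^{-2}N_j}u_0\|_2\lesssim\eps$. It then splits with a genuine gap, $\ul(0)=P_{\le\eps N_j}u_0$ and $\uh(0)=P_{>\eps^{-1}N_j}u_0$, discarding only $O(\eps)$ of mass. Because the two data are now separated by a factor $\eps^{-2}$ in frequency, the persistence lemmas yield \emph{quantitative} decay ($\||\nabla|^s\ul\|_{S(\R)}\lesssim(\eps N_j)^s$ and $\|P_N\uh\|_{L^4_{t,x}}\lesssim N^s(\eps^{-1}N_j)^{-s}$) which produces a positive power of $\eps$ at every dyadic frequency in the cross terms; the only genuinely separated interaction that remains is handled by a bilinear estimate (the paper uses bilinear restriction, Lemma~\ref{lm:bl1}, though a dyadically summed version of Lemma~\ref{lm:bl2} would also serve there). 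Without first manufacturing this low-mass window and gap, the perturbative scheme cannot close, for the concrete reason above.
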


Proving these two propositions will occupy the better part of this section.  Before embarking on this task, we would first like to show how Theorem~\ref{thm:scapds} follows from these two propositions.

\begin{proof}[Proof of Theorem \ref{thm:scapds}]
It suffices to prove the result for an arbitrary (but fixed) mass threshold $M$, say $M=M_*$.  We will do this inductively.

By Lemma~\ref{lm:loc} there is a constant $\eps>0$ so that \eqref{E:thm:scapds} holds for all initial data with mass $\|u_0\|_{2}\leq \eps$.  This is the base step of the induction.

Choose $\eta_0=\eta_0(M_*)$ small enough so that Proposition~\ref{prop:onebubble} may be applied to all initial data with mass not exceeding $M_*$.  We reduce $\eta_0$ further, if necessary, so that  $2\eta_0 < \eps$.  This $\eta_0$ represents the step-size in the induction.

Now we come to the inductive step.  Let us suppose Theorem~\ref{thm:scapds} is valid at the mass threshold $M-\eta_0$ in the sense explained in Proposition~\ref{prop:twobubble}.  This introduces two constants $B$ and $D_2$, as there.  We now choose $\eta_1=\eta_1(M,B)$ as in that proposition and then set $D_0= D_2 + D_1(M,\eta_1)$ where $D_1$ is as in Proposition~\ref{prop:onebubble}.  To complete the inductive step, we must obtain a uniform bound $C$ so that for any initial data $u_0\in L^2(\R^2)$ with $\|u_0\|_{2}\leq M\wedge M_*$ and any parameter $D\geq D_0$ the corresponding solution $u$ to $\pds$ obeys $\| u\|_{S(\R)}\leq C$.  We divide into two cases:

\medskip

\textbf{Case 1:} $\qquad\displaystyle \inf_{N\in 2^\Z}\|P_{\le N/2}u_0\|_2+\|P_{>\frac N{\eta_1}} u_0\|_2 <2 \eta_0.$\\[1ex]
In this case, we can find $N_0\in 2^\Z$ so that
\begin{align*}
\|P_{\le N_0/2} u_0\|\le 2\eta_0 \qtq{and} \|P_{>\frac {N_0}{\eta_1}}\|_2 \leq 2\eta_0.
\end{align*}
In this case, the requisite bounds follow from Proposition \ref{prop:onebubble}.

\medskip

\textbf{Case 2:} $\qquad\displaystyle \|P_{\le N/2}u_0\|_2+\|P_{>\frac 1{\eta_1}N}u_0\|_2 \geq 2\eta_0 \quad\text{for any $N\in 2^\Z$}.$\\[1ex]
If $\|u_0\|_{2}<2\eta_0$, the space-time bounds follow from Lemma~\ref{lm:loc}; we exclude this from further consideration. By the dominated convergence theorem,
\begin{align*}
\|P_{\leq N} u_0\|_2\to 0 \text{ as } N\to 0 \qtq{and} \|P_{\leq N} u_0\|_2\to \|u_0\|_2\geq 2\eta_0  \text{ as } N\to \infty.
\end{align*}
Thus, one can find $N_0\in 2^\Z$ such that
\begin{align*}
\|P_{\le N_0/2}u_0\|_2\le \eta_0 \qtq{and} \|P_{\le N_0}u_0\|_2\ge \eta_0.
\end{align*}
Together with the defining property of Case~2, this then implies
\begin{align*}
\|P_{>\frac {N_0}{\eta_1}}u_0\|_2\ge \eta_0.
\end{align*}
Thus Proposition \ref{prop:twobubble} applies yielding the requisite space-time bound.

This completes the proof of Theorem \ref{thm:scapds}.
\end{proof}

It remains to prove Propositions~\ref{prop:onebubble} and \ref{prop:twobubble}.  We start with

\begin{proof}[Proof of Proposition \ref{prop:onebubble}]

Define
\begin{align*}
\pl=P_{< N_0\eta_0}, \quad \ph=P_{>\frac{N_0}{\eta_0\eta_1}}, \qtq{and} \pmd=P_{N_0\eta_0\le\cdot\leq\frac{N_0}{\eta_0\eta_1}}.
\end{align*}
As $\eta_0$ is small, the notions of high and low frequencies appearing here lie far beyond those appearing in the hypotheses of the proposition.

Let $\alpha:=m_D(N_0\eta_0)$.  We discuss two cases:  When $0\le \alpha\le \eta_0^{\frac 16}$, we approximate the solution to $\pds$ by the free Schr\"odinger evolution. In the case when $\eta_0^{\frac 16}< \alpha\le 1$, we use the solution to $\nalf$ as an approximate solution.  In either case, we then deduce the requisite bounds from Lemma~\ref{lm:stab}.

First we assume $0\le \alpha\le \eta_0^{\frac 16}$.  Define $v:=e^{it\Delta}\pmd u_0$; then $v$ solves
\begin{align*}
\begin{cases}
i\partial_t v+\Delta v=\pd F(\pd v)+e,\\
v(0)=\pmd u_0,
\end{cases}
\end{align*}
with $e=-\pd F(\pd v)$.  By Strichartz,
\begin{align}
\|u(0)-v(0)\|_2\le4\eta_0,\label{pp1:smalldata} \\
\|\pl v\|_{S(\R)}+\|\ph v\|_{S(\R)}\lsm \eta_0.\label{pp1:tail}
\end{align}
To apply Lemma \ref{lm:stab}, we need to show $e$ is small in suitable spaces. Using H\"older and Strichartz, we estimate
\begin{align*}
\|e\|_{N(\R)}&\le \|\pd F(\pd v)\|_{N(\R)}\\
&\le \|\pd v\|_{L_t^3L_x^6}^2\|\pd v\|_{L_t^\infty L_x^2}\\
&\lsm M^2 (\|\pl v\|_{L_t^\infty L_x^2}+\|\ph v\|_{L_t^\infty L_x^2}+\|\pd \pmd v\|_{L_t^\infty L_x^2}).
\end{align*}
Using the fact that $m_D(|\xi|)$ is decreasing, we get
\begin{align*}
\|\pd\pmd\|_{L^2\to L^2}\le m_D(N_0\eta_0)=\alpha\le \eta_0^{\frac 16}.
\end{align*}
Using also \eqref{pp1:tail} we obtain
\begin{align*}
\|e\|_{N(\R)}\lsm M^2(\eta_0+M\eta_0^{\frac 16}).
\end{align*}
Combining this with \eqref{pp1:smalldata} and invoking Lemma \ref{lm:stab}, we conclude that if $\eta_0$ is sufficiently small depending only on $M$, the solution to $\pds$ is global and satisfies
\begin{align*}
\|u\|_{S(\R)}\le C(M).
\end{align*}

Assume now that $ \eta_0^{\frac 16}< \alpha\le 1$ and let $v$ be the solution to
\begin{align*}
\begin{cases}
iv_t+\Delta v=\alpha^4 F(v),\\
v(0)=P_{N_0\leq\cdot\leq N_0/\eta_1} u_0.
\end{cases}
\end{align*}
By Lemma \ref{lm:ub}, such a solution exists, is global, and satisfies
\begin{align}\label{E:vbndd}
\|v\|_{S(\R)}\le C(M).
\end{align}

By construction,
\begin{align}\label{pp1:smd}
\|v(0)-u(0)\|_2\le 4\eta_0 \qtq{and} \|\pl v(0)\|_2+\|\ph v(0)\|_2=0.
\end{align}
This together with \eqref{E:vbndd} and Lemmas \ref{lm:per} and \ref{lm:per2} give
\begin{align*}
\|\pl v\|_{S(\R)}\le C(M) \eta_0^{\frac 13} \qtq{and} \|\ph v\|_{S(\R)}\le C(M)\eta_0.
\end{align*}
In particular, from Duhamel's formula, writing
$$
G(t,x):=\alpha^4\int_0^t e^{i(t-s)\Delta}F(v) \,ds,
$$
we have
\begin{align}\label{G}
\begin{cases}
\|\pl G\|_{S(\R)}\le \|\pl v\|_{S(\R)}+\|\pl v(0)\|_2\lsm C(M) \eta_0^{\frac 13} + \eta_0,\\
\|\ph G\|_{S(\R)}\le \|\ph v\|_{S(\R)}+\|\ph v(0)\|_2\lsm C(M) \eta_0 + \eta_0.
\end{cases}
\end{align}

Our goal is to show that for $\eta_0$ sufficiently small depending on $M$, $v$ is an approximate solution to $\pds$.  Indeed, $v$ solves
\begin{align*}
(i\partial_t+\Delta) v=\pd F(\pd v)+e
\end{align*}
with
\begin{align*}
e=\alpha^4 |v|^2 v-\pd F(\pd v)&=(\alpha-\pd)\alpha^3 |v|^2 v+\pd(\alpha^3|v|^2 v-|\pd v|^2 \pd v)\\
&=:e_1+e_2.
\end{align*}
We will prove that $e_1$ and $e_2$ are small in suitable spaces. For $e_1$, we directly estimate
\begin{align*}
\biggl\|\int_0^t e^{i(t-s)\Delta}e_1(s) \,ds\biggr\|_{S(\R)}&=\bigl\|\tfrac 1\alpha(\alpha-\pd)G\bigr\|_{S(\R)}\\
&\le \bigl\|\tfrac 1\alpha (\alpha-\pd)\ph G\bigr\|_{S(\R)}+\bigl\|\tfrac 1\alpha(\alpha-\pd)\pl G\bigr\|_{S(\R)}\\
&\quad+\bigl\|\tfrac 1\alpha(\alpha-\pd)\pmd G\bigr\|_{S(\R)}.
\end{align*}
As $(\alpha-\pd)$ is a Mikhlin multiplier and $\alpha>\eta_0^{1/6}$, taking $\eta_0$ sufficiently small and using \eqref{G}, we obtain
\begin{align*}
 \bigl\|\tfrac 1\alpha (\alpha-\pd)\ph G\bigr\|_{S(\R)}+\bigl\|\tfrac 1\alpha(\alpha-\pd)\pl G\bigr\|_{S(\R)}\le C(M)\eta_0^{\frac 13} \eta_0^{-\frac 16}\le C(M)\eta_0^{\frac 16}.
\end{align*}
To estimate the last term, we note that
\begin{align*}
\|(\alpha-\pd)\pmd\|_{L^2\to L^2}\lsm \frac{\log_2(\frac 1{\eta_1})}{\log_2(D_1)}\le \eta_0^2,
\end{align*}
by taking $D_1$ sufficiently large depending on $\eta_0$ and $\eta_1$.  Thus, by Strichartz and \eqref{E:vbndd},
\begin{align*}
\bigl\|\tfrac 1\alpha(\alpha-\pd)\pmd G\bigr\|_{S(\R)}&\le \|\alpha^3(\alpha-\pd)\pmd(|v|^2 v)\|_{L_t^1L_x^2}\\
&\lsm \eta_0^2 \||v|^2 v\|_{L_t^1L_x^2}\\
&\lsm \eta_0^2 \|v\|_{L_t^3L_x^6}^3\\
&\le C(M) \eta_0^2.
\end{align*}
Putting the three pieces together, we obtain
\begin{align*}
\bigl \|\int_0^t e^{i(t-s)\Delta} e_1(s) ds\bigr\|_{S(\R)}\le C(M)\eta_0^{\frac 16}.
\end{align*}

To estimate $e_2$, we use Strichartz and \eqref{E:vbndd}, as follows:
\begin{align*}
\|e_2\|_{N(\R)}&\le \|\alpha^3 |v|^2 v-|\pd v|^2\pd v\|_{L_t^{\frac 32}L_x^{\frac 65}}\\
&\le \|\alpha v-\pd v\|_{L_t^\infty L_x^2}(\|v\|_{L_t^3L_x^6}^2+\|\pd v\|_{L_t^3 L_x^6}^2)\\
&\le C(M)\bigl[\|(\alpha-\pd)\pmd v\|_{L_t^\infty L_x^2}+\|\pl v\|_{L_t^\infty L_x^2}+\|\ph v\|_{L_t^\infty L_x^2}\bigr]\\
&\le C(M)(\eta_0^2+\eta_0^{\frac 13})\\
&\le C(M)\eta_0^{\frac 13}.
\end{align*}

Invoking Lemma \ref{lm:stab}, we conclude that by taking $\eta_0$ sufficiently small depending only on $M$, there exists a unique global solution to $\pds$ such that
\begin{align*}
\|u\|_{S(\R)}\le C(M).
\end{align*}
This completes the proof of Proposition \ref{prop:onebubble}.
\end{proof}

Finally, we conclude this section with the following

\begin{proof}[Proof of Proposition \ref{prop:twobubble}]
Our basic small parameter is $\eps\ll 1$, which will be chosen later. If $\eta_1$ is sufficiently small (we need $\frac 1{\eta_1}>\eps^{-2}+\eps^{-4\eps^{-2}}$), we can find $\eps^{-2}$ disjoint intervals
\begin{align*}
[\eps^2 N_j, \eps^{-2} N_j]\subset [N_0, \tfrac 1{\eta_1} N_0],
\end{align*}
where $N_j=\eps^{-4j}N_0$ for $j\geq 1$.  Consequently, there is an $N_j$ such that
\begin{align}\label{sm}
\|P_{\eps^2 N_j\le\cdot<\eps^{-2}N_j}u_0\|_2\lsm \eps.
\end{align}

Define
\begin{align*}
\pl :=P_{\le \eps N_j} \qtq{and} \ph:=P_{>\eps^{-1}N_j}.
\end{align*}
Let $\uh$ be the solution  to $\pds $ with initial data $\uh(0)=\ph u_0$.  Let $\ul$ be the solution to $\pds$ with initial data $\ul(0)=\pl u_0$.  We have
\begin{align*}
\|\uh(0)\|_2\le M-\|P_{\le N_0}u_0\|_2\le M-\eta_0, \\
\|\ul(0)\|_2\le M-\|P_{> \frac {N_0}{\eta_1}} u_0\|_2\le M-\eta_0.
\end{align*}
By the inductive hypothesis, both $\uh$ and $\ul$ are global solutions and satisfy
\begin{align}
\|\ul\|_{S(\R)}\leq B \qtq{and} \|\uh\|_{S(\R)}\le B.\label{p2:0}
\end{align}

The implicit constants appearing in the remainder of the proof will all depend on $M$ and $B$ (but not on $\eps$ or $u_0$).  For ease of reading, we omit this from the notation.

By Bernstein, for any $s>0$,
\begin{align*}
\||\nabla|^s \ul(0)\|_2\lesssim (\eps N_j)^s \qtq{and} \||\nabla|^{-s}\uh(0)\|_2\lesssim (\eps^{-1}N_j)^{-s}.
\end{align*}
Therefore, by Lemmas \ref{lm:perpo} and \ref{lm:perne} we have
\begin{align}
\||\nabla|^s\ul\|_{S(\R)}\lesssim (\eps N_j)^s, \label{p2:l}\\
\|P_N \uh\|_{L_{t,x}^4}\lesssim N^{s} (\eps^{-1}N_j)^{-s}, \label{p2:h}
\end{align}
for any $0<s<\frac 12$.

Define $\tu:=\uh+\ul$.  We will show that $\tu$ is an approximate solution to $\pds$.  Note that $\tu$ satisfies
\begin{align*}
\begin{cases}
i\tu_t+\Delta \tu=\pd F(\pd \tu)+e,\\
\tu(0)=(\pl+\ph)u(0),
\end{cases}
\end{align*}
with $e=\pd F(\pd\uh)+\pd F(\pd\ul)-\pd F(\pd\tu)$.  By \eqref{sm} and \eqref{p2:0},
\begin{align*}
\|\tu(0)-u(0)\|_2\lsm \eps \qtq{and} \|\tu \|_{S(\R)}\lesssim 1.
\end{align*}
In order to apply Lemma \ref{lm:stab}, it suffices to show smallness of the error $e$.

Using \eqref{p2:0} and a dyadic decomposition, we estimate
\begin{align}
\|e\|_{L_{t,x}^{\frac 43}}&\lsm \bigl\| |\pd \uh|^2|\pd\ul|+|\pd \ul|^2|\pd \uh| \bigr\|_{L_{t,x}^{\frac 43}}\notag\\
&\lesssim \sum_{N\le \eps^{-\frac 12}N_j}\| (P_N \pd \uh) \pd \ul\|_{L_{t,x}^2}\label{p2:1}\\
\qquad&\qquad+\sum_{N>\eps^{-\frac 12}N_j}\|(P_N\pd \uh) P_{>N/8}\pd \ul\|_{L_{t,x}^2}\label{p2:2}\\
\qquad&\qquad+\sum_{N>\eps^{-\frac 12}N_j}\|(P_N\pd\uh) P_{\le N/8}\pd\ul\|_{L_{t,x}^2} .\label{p2:3}
\end{align}
To estimate \eqref{p2:1}, we use \eqref{p2:0} and \eqref{p2:h} as follows:
\begin{align*}
\eqref{p2:1}&\le \sum_{N\le \eps^{-\frac 12}N_j}\|P_N \pd \uh\|_{L_{t,x}^4}\|\pd \ul\|_{L_{t,x}^4} \lesssim (\eps^{-1}N_j)^{-s}\sum_{N\le \eps^{-\frac 12}N_j}N^s\lesssim \eps^{\frac s2}.
\end{align*}
To estimate \eqref{p2:2}, we use \eqref{p2:0} and \eqref{p2:l}:
\begin{align*}
\eqref{p2:2}&\le \sum_{N>\eps^{-\frac 12}N_j}\|\uh\|_{L_{t,x}^4}N^{-s}\||\nabla|^s\ul\|_{L_{t,x}^4} \lesssim \sum_{N>\eps^{-\frac 12}N_j}N^{-s}(\eps N_j)^s\lesssim \eps^{\frac 32 s}.
\end{align*}
To estimate the last term, we use the bilinear restriction estimate:  Choosing $p=\frac{11}6$ in Lemma~\ref{lm:bl1} and using Sobolev embedding, we get
\begin{align*}
\eqref{p2:3}&\lsm \sum_{N>\eps^{-\frac 12}N_j}\|(P_N \pd \uh) \ P_{\le N/8}\pd\ul\|_{L_{t,x}^{\frac{11}6}}^{\frac 12}\|P_{\le N/8}\ul\|_{L_{t,x}^{\frac{44}9}}^{\frac 12}\|P_N\uh\|_{L_{t,x}^4}^{\frac 12}\\
&\lesssim \sum_{N>\eps^{-\frac 12}N_j}N^{-\frac 2{11}\cdot\frac 12}\| |\nabla|^{\frac 2{11}}P_{\le N/8}\ul\|_{L_t^{\frac{44}9}L_x^{\frac{44}{13}}}^{\frac 12}\\
&\lesssim \sum_{N>\eps^{-\frac 12}N_j}N^{-\frac 1{11}}(\eps N_j)^{\frac 1{11}}\\
&\lesssim \eps^{\frac 3{22}}.
\end{align*}

Choosing $\eps$ sufficiently small depending on the implicit constants (that depend only on $M$ and $B$) we may then applying Lemma \ref{lm:stab} to conclude that the solution $u$ with initial data $u_0$ exists, is global, and obeys
\begin{align*}
\|u\|_{S(\R)}\leq C(M,B).
\end{align*}
This completes the proof of Proposition \ref{prop:twobubble}.
\end{proof}

\section{Nonlinear profile decomposition for $\pds$}\label{S:5}

The principal goal of this section is to prove the following theorem.  We remind the reader that the unitary transformations $G_n^j$ were defined in \eqref{G defn}.

\begin{thm}[Nonlinear profile decomposition for $\pds$]\label{T:npd}
Fix $M>0$ and $D\geq D_0(M)$ conforming to the requirements of Theorem~\ref{thm:scapds}.  Let $u_{0,n}$ be a sequence in $L^2(\R^2)$ with $\|u_{0,n}\|_2\leq M$ and let $u_n$ denote the sequence of solutions to $\pds$ with initial data $u_{n,0}$.  Suppose the linear profile decomposition of $u_{0,n}$ given by Theorem~\ref{thm:lpd} takes the form
\begin{align*}
u_{0,n}=\sum_{j=1}^ J G_n^j\phi^j +r_n^J
\end{align*}
and let $u_n^j$ denote the solution to $\pds$ with initial data $G_n^j\phi^j$.  Then
\begin{align*}
\lim_{J\to\infty} \limsup_{n\to\infty} \biggl\| u_n-  \biggl(\sum_{j=1}^ J u_n^j + e^{it\Delta} r_n^J \biggr) \biggr\|_{S(\R)} =0.
\end{align*}
\end{thm}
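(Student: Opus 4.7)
The plan is to exhibit
$$\tilde u_n^J := \sum_{j=1}^J u_n^j + e^{it\Delta} r_n^J$$
as an approximate $\pds$-solution whose initial data agrees exactly with $u_n(0)$, and then conclude via the stability lemma (Lemma~\ref{lm:stab}).

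\textbf{Uniform space-time bounds for profiles.} Since $\|G_n^j\phi^j\|_2 = \|\phi^j\|_2 \le M$, Theorem~\ref{thm:scapds} gives $\|u_n^j\|_{S(\R)} \le C(\|\phi^j\|_2)$ uniformly in $n$. The mass decoupling $\sum_j \|\phi^j\|_2^2 \le M^2$ from Theorem~\ref{thm:lpd}, together with the small-data theory of Lemma~\ref{lm:loc}, upgrades this to $\|u_n^j\|_{S(\R)} \lesssim \|\phi^j\|_2$ for all $j$ beyond a threshold depending only on $M$; hence $\sum_{j\ge 1}\|u_n^j\|_{S(\R)}^2 \lesssim_M 1$ uniformly in $n$ and $J$.

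\textbf{Identification of profiles and asymptotic orthogonality.} For each $j$ I would show that, up to a vanishing $L_{t,x}^4$-error, $u_n^j$ coincides with the space-time transported function $T_n^j W^j$ for a fixed $W^j$ solving one of three limiting equations, as dictated by the case of $(N_n^j,\xi_n^j,t_n^j)$. In Case I ($N_n^j\to\infty$, or $|\xi_n^j|\to\infty$) the Fourier support of $G_n^j\phi^j$ escapes the support of $m_D$, so the nonlinear term becomes negligible in $S(\R)$ and $W^j$ is simply the free evolution of $\phi^j$. In Case II ($N_n^j\to 0$, $\xi_n^j\to\xi^j$) the data concentrates at a single frequency where $\pd$ acts as the constant $\alpha := m_D(\xi^j)\in[0,1]$; the limit $W^j$ solves $\nalf$, whose uniform global bound is supplied by Lemma~\ref{lm:ub}. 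In Case III ($N_n^j\equiv 1$, $\xi_n^j\equiv 0$) the multiplier is unchanged under the action of the symmetry group, and $W^j$ is itself the $\pds$ solution with data $e^{it_n^j\Delta}\phi^j$ furnished by Theorem~\ref{thm:scapds}. In each case the candidate $T_n^j W^j$ is constructed explicitly and verified to approximate $u_n^j$ via Lemma~\ref{lm:stab}. The asymptotic orthogonality $j\perp k$ of the parameters then yields, by change of variables,
$$\lim_{n\to\infty}\|\pd u_n^j \cdot \pd u_n^k\|_{L_{t,x}^2(\R\times\R^2)} = 0 \qtq{for} j\neq k,$$
which together with the previous step promotes the individual bounds to $\|\sum_{j=1}^J u_n^j\|_{L_{t,x}^4(\R\times\R^2)} \le C(M)$ uniformly in $n,J$.

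\textbf{Error estimate and conclusion.} The error $e_n^J := (i\partial_t+\Delta)\tilde u_n^J - \pd F(\pd\tilde u_n^J)$ splits, via elementary algebraic expansion of the cubic nonlinearity, into pairwise cross terms between distinct nonlinear profiles (small in $L_{t,x}^{4/3}$ thanks to the orthogonality just established) and terms containing a factor of $e^{it\Delta}r_n^J$ (controlled by $\|e^{it\Delta}r_n^J\|_{L_{t,x}^4}\to 0$ as $J\to\infty$, from Theorem~\ref{thm:lpd}, together with the uniform $L_{t,x}^4$-bound on $\sum_j u_n^j$). Sending first $n\to\infty$ and then $J\to\infty$ forces $\|e_n^J\|_{N(\R)}$ to zero; applying Lemma~\ref{lm:stab} with $\tilde u = \tilde u_n^J$ and the exact initial match $u_n(0) = \tilde u_n^J(0)$ then yields the stated $S(\R)$-convergence. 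The principal difficulty is the profile identification: because $\pds$ breaks scale- and Galilean-invariance, the nonlinear profiles $u_n^j$ are not symmetry-transforms of a fixed solution, and one must verify case by case that the rescaled equation satisfied by $u_n^j$ converges to the correct limit in a sense strong enough to feed both Lemma~\ref{lm:stab} and the orthogonality calculation.
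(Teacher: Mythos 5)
Your proposal follows essentially the same route as the paper: uniform per-profile bounds from Theorem~\ref{thm:scapds} upgraded to summable bounds via small-data theory, case-by-case identification of the nonlinear profiles with transported solutions of the free equation, $\nalf$, or $\pds$ itself (the paper's Lemmas~\ref{lm:npd1}--\ref{lm:npd3}), decoupling of the $\pd$-truncated profiles from parameter orthogonality, and a final application of Lemma~\ref{lm:stab} to the approximate solution $\sum_j u_n^j + e^{it\Delta}r_n^J$. The steps you flag as requiring care (in particular that $\pd$ does not commute with the symmetries, so the decoupling of $\pd u_n^j\cdot\pd u_n^k$ needs a case analysis using the limits $\|\pd u_n^j\|_{S}\to 0$ in Case I and $\|[\pd-m_D(\xi^j)]u_n^j\|_{S}\to 0$ in Case II) are exactly the ones the paper spends its effort on.
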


This theorem provides an asymptotic principle of superposition for our nonlinear equation.  To prove such an assertion, one needs to show that the sum of nonlinear solutions is almost a solution, which amounts to showing that cross terms in the nonlinearity (i.e., those coming from different profiles) are negligible in some way.  This in turn requires non-trivial structural information on the nonlinear solutions $u_n^j$, not merely uniform bounds.  Traditionally, the profiles for differing $n$ are related by a symmetry of the equation; the Fourier truncation breaks the scaling symmetry and so any possibility of such an exact relation when $N_n\not\equiv 1$.  The next two lemmas provide the requisite description of the nonlinear solutions associated to linear profiles conforming to Cases~I and~II of Theorem~\ref{thm:lpd}.

\begin{lem}[Case~I nonlinear profiles] \label{lm:npd1} Fix $M>0$ and let $D\ge D_0(M)$ as above and suppose $\phi\in L^2$ satisfies $\|\phi\|_2\le M$.  Let us further suppose that the parameters $(N_n,\xi_n, x_n, t_n)\in \R^+\times\R^2\times\R^2\times\R$ obey the following: {\upshape(i)} $t_n\equiv 0$ or $t_n\to \pm\infty$ and {\upshape(ii)} $N_n\to \infty$; or $N_n\to 0$ and $|\xi_n|\to \infty$; or $N_n\equiv1$ and $|\xi_n|\to \infty$.  If $u_n$ denotes the solution to $\pds$ with initial data $\phi_n:=G_n \phi$, then
\begin{align*}
\lim_{n\to \infty}\|u_n- v_n\|_{S(\R)}=0 \qtq{where} v_n := T_n[ e^{it\Delta} \phi ]
\end{align*}
and $T_n$ is as in \eqref{T defn}.  Moreover,
\begin{align}\label{lpr:1}
\lim_{n\to \infty}\|\pd u_n\|_{S(\R)}= 0.
\end{align}
\end{lem}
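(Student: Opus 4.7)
My plan is to compare $u_n$ to the free Schr\"odinger evolution $v_n$, exploiting the identity $v_n=T_n[e^{it\Delta}\phi]=e^{it\Delta}\phi_n$, which is a direct consequence of the definition of $T_n$ together with the Galilean/rescaling invariance of the Schr\"odinger group. The underlying mechanism is that in each of the three subcases of Case~I the frequency support of $\phi_n=G_n\phi$ escapes the region $\{|\xi|\lesssim D\}$ where the symbol $m_D$ of $\pd$ is appreciable. To quantify this, I would use the identity $\widehat{g_n f}(\xi)=N_n^{-1}e^{-ix_n\cdot(\xi-\xi_n)}\hat f((\xi-\xi_n)/N_n)$ and Plancherel to obtain
\begin{equation*}
\|\pd\phi_n\|_2^2=(2\pi)^{-2}\int|m_D(\xi_n+N_n\eta)|^2|\hat\phi(\eta)|^2\,d\eta.
\end{equation*}
By density one may assume $\hat\phi$ is bounded and compactly supported. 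When $N_n\to\infty$, the condition $|\xi_n+N_n\eta|\le 2D$ confines $\eta$ to a set of measure $O(D^2/N_n^2)$; when $N_n\to 0$ or $N_n\equiv 1$ with $|\xi_n|\to\infty$, this condition is eventually incompatible with the compact support of $\hat\phi$. Dominated convergence then yields $\|\pd\phi_n\|_2\to 0$ in all three subcases.

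Next, since $\pd$ commutes with $e^{it\Delta}$, one has $\pd v_n=e^{it\Delta}\pd\phi_n$, and Strichartz gives $\|\pd v_n\|_{S(\R)}+\|\pd v_n\|_{L^4_{t,x}(\R\times\R^2)}\to 0$. Regarding $v_n$ as an approximate solution via $iv_{n,t}+\Delta v_n=\pd F(\pd v_n)+e_n$ with $e_n=-\pd F(\pd v_n)$, the uniform Mikhlin bound on $\pd$ combined with H\"older yields $\|e_n\|_{L^{4/3}_{t,x}(\R\times\R^2)}\lesssim\|\pd v_n\|_{L^4_{t,x}}^3\to 0$. Dual Strichartz then upgrades this to $\bigl\|\int_0^t e^{i(t-s)\Delta}e_n(s)\,ds\bigr\|_{S(\R)}\to 0$. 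Since $v_n(0)=\phi_n=u_n(0)$ and $\|v_n\|_{L^\infty_tL^2_x}+\|v_n\|_{L^4_{t,x}}$ is controlled solely in terms of $M$ by Strichartz, the perturbation result Lemma~\ref{lm:stab} applies with $M'=0$ and produces $\|u_n-v_n\|_{S(\R)}\to 0$.

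For \eqref{lpr:1}, I decompose $\pd u_n=\pd v_n+\pd(u_n-v_n)$; the first summand vanishes in $S(\R)$ by the preceding step, while $\|\pd(u_n-v_n)\|_{S(\R)}\lesssim\|u_n-v_n\|_{S(\R)}\to 0$ by the uniform $L^p$-boundedness of $\pd$. The main (relatively mild) obstacle in this plan is the density reduction to $\hat\phi$ bounded and compactly supported: one must verify that $\|u_n-v_n\|_{S(\R)}\to 0$ is stable under $L^2$-approximation of $\phi$. This is another invocation of Lemma~\ref{lm:stab}, exploiting that the Strichartz norms of $u_n$ (via Theorem~\ref{thm:scapds}) and $v_n$ (via Strichartz) are uniformly bounded in~$n$.
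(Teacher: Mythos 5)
Your proposal is correct and follows essentially the same route as the paper: the same approximate solution $v_n=e^{it\Delta}\phi_n$, the same error $e_n=-\pd F(\pd v_n)$ controlled by the vanishing of the low-frequency part of $v_n$, and the same application of Lemma~\ref{lm:stab}. The only (cosmetic) difference is that you establish the smallness $\|\pd\phi_n\|_2\to 0$ at time zero and propagate it by commuting $\pd$ with $e^{it\Delta}$ and Strichartz, whereas the paper changes variables directly in the space-time norm; your density reduction is harmless but not needed, since the integral $\int|m_D(\xi_n+N_n\eta)|^2|\hat\phi(\eta)|^2\,d\eta$ already tends to zero for general $\phi\in L^2$ by absolute continuity of the integral.
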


\begin{proof}
Evidently,
\begin{align*}
v_n(0)=\phi_n \qtq{and}  \|v_n\|_{S(\R)}\lsm \|\phi\|_2\lsm M.
\end{align*}
Moreover, $v_n$ solves $\pds$ up to an error
\begin{align*}
e_n=-\pd F(\pd v_n).
\end{align*}
Performing a change of variables, we first estimate
\begin{align*}
\|P_{\le 2D} v_n\|_{S(\R)}&=N_n\bigl \|P_{|\xi+\xi_n|\le 2D}\bigl\{\bigl[ e^{i(t_n+N_n^2 t)\Delta}\phi\bigl](N_n(y-x_n-2\xi_n t))\bigr\}\bigr\|_{S(\R)}\\
&=N_n \bigl\|\bigl\{e^{i(t_n+N_n^2 t)\Delta}\bigl[P_{|N_n\xi+\xi_n|\le 2D}\phi\bigr]\bigr\}(N_n x)\bigr\|_{S(\R)}\\
&\lsm \|P_{|\xi+\xi_n/N_n|\le \frac{2D}{N_n}}\phi\|_2.
\end{align*}
One can easily check that by dominating convergence,
\begin{align*}
\|P_{|\xi+\frac{\xi_n}{N_n}|\le \frac{2D}{N_n}}\phi\|_2\to 0 \qtq{as} n\to \infty,
\end{align*}
and so,
\begin{align}\label{lnp:2}
\lim_{n\to \infty}\|P_{\le 2D}v_n\|_{S(\R)}=0.
\end{align}
Using this, we estimate
\begin{align*}
\|e_n\|_{N(\R)}\le \|\pd v_n\|_{L_{t,x}^4}^3\le \|P_{\le 2D} v_n\|_{S(\R)}^3\to 0 \qtq{as} n\to \infty.
\end{align*}
An application of Lemma \ref{lm:stab} then yields
\begin{align*}
\lim_{n\to \infty}\|u_n-v_n\|_{S(\R)}=0.
\end{align*}
This completes the proof.
\end{proof}

\begin{lem}[Case~II nonlinear profiles] \label{lm:npd2}
Fix $M>0$ and let $D\ge D_0(M)$ as above and suppose $\phi\in L^2$ satisfies $\|\phi\|_2\le M$.  Let us further suppose that the parameters $(N_n,\xi_n, x_n, t_n)\in \R^+\times\R^2\times\R^2\times\R$ obey the following: {\upshape(i)} $t_n\equiv 0$ or $t_n\to \pm\infty$ and {\upshape(ii)} $N_n\to 0$ and $\xi_n\to \xi_\infty\in \R^2$.  If $u_n$ denotes the solution to $\pds$ with initial data $\phi_n:=G_n \phi$, then there exists $v\in S(\R)$ with $\|v\|_{S(\R)}\leq C(M)$ so that setting $v_n:=T_n v$ we have
\begin{align}\label{lpr:2}
\lim_{n\to \infty}\|u_n- v_n\|_{S(\R)}=0 \qtq{and also}  \lim_{n\to \infty}\|[\pd-m_D(\xi_\infty)] u_n\|_{S(\R)}= 0.
\end{align}
Moreover, if $t_n\equiv 0$ and $\xi_n\equiv 0$, then one may choose $v$ to be the solution to NLS with initial data $\phi$.
\end{lem}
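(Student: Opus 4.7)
The plan is to exploit the action of $T_n$ to reduce the problem to constructing an approximate solution to $\pds$ of the form $v_n := T_n v$, where $v$ solves $\nalf$ with $\alpha := m_D(\xi_\infty)$. To define $v$, I distinguish cases: if $t_n\equiv 0$, take $v$ to be the solution to $\nalf$ with initial data $\phi$; if $t_n\to \pm\infty$, take $v$ to be the solution to $\nalf$ that scatters to $e^{it\Delta}\phi$ as $t\to\pm\infty$, which exists because Lemma~\ref{lm:ub} provides uniform global space-time bounds (and hence scattering) for $\nalf$. In each case, $\|v\|_{S(\R)}\leq C(M)$.

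Since $T_n$ is built from time and space translations, a Galilei boost, and a scaling---all symmetries of $\nalf$---the function $v_n$ satisfies $(i\partial_t+\Delta)v_n = \alpha^4 F(v_n)$, and is therefore an approximate solution of $\pds$ with error
\begin{align*}
e_n := (i\partial_t+\Delta)v_n - \pd F(\pd v_n) = \alpha^3(\alpha - \pd)F(v_n) + \pd\bigl[F(\alpha v_n) - F(\pd v_n)\bigr].
\end{align*}
At $t=0$ one has $v_n(0) = g_n v(t_n)$ and $\phi_n = g_n e^{it_n\Delta}\phi$, so $\|v_n(0)-\phi_n\|_2 = \|v(t_n)-e^{it_n\Delta}\phi\|_2 \to 0$ (trivially when $t_n\equiv0$, by scattering otherwise).

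The main step---and the chief obstacle---is to show $\|e_n\|_{N(\R)}\to 0$, after which Lemma~\ref{lm:stab} delivers $\|u_n-v_n\|_{S(\R)}\to 0$. The key observation is that since $N_n\to 0$ and $\xi_n\to\xi_\infty$, the function $v_n$ is essentially spectrally localized to an arbitrarily small neighborhood of $\xi_\infty$, where $m_D(\xi)\to\alpha$. To make this rigorous, I approximate $\phi$ by a Schwartz function $\phi^{(\varepsilon)}$ whose Fourier transform is supported in $\{|\eta|\leq R_\varepsilon\}$ with $\|\phi-\phi^{(\varepsilon)}\|_2<\varepsilon$. By Lemma~\ref{lm:stab}, the corresponding $\nalf$-solution $v^{(\varepsilon)}$ satisfies $\|v-v^{(\varepsilon)}\|_{S(\R)}\lesssim\varepsilon$; combined with Lemmas~\ref{lm:perpo} and~\ref{lm:per}, $v^{(\varepsilon)}$ is effectively supported at frequencies $\lesssim R_\varepsilon$. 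Inserting a smooth Fourier cutoff $\chi_n(\xi):=\chi\bigl((\xi-\xi_n)/N_n\bigr)$ equal to one on the Fourier support of $T_n v^{(\varepsilon)}$ (and hence of $F(T_n v^{(\varepsilon)})$, by convolution of three bumps concentrating at $\pm\xi_n$), the operator $(\pd-\alpha)\chi_n$ has convolution kernel whose $L^1$ norm equals $\|\check h_n\|_{L^1}$, where $h_n(\eta)=[m_D(\xi_n+N_n\eta)-\alpha]\chi(\eta)$. Since $h_n\to 0$ in every $C^k$, Fourier inversion yields $\|\check h_n\|_{L^1}\to 0$, so the operator norm of $(\pd-\alpha)\chi_n$ on every $L^p$ space tends to zero. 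Applying this to $F(T_n v^{(\varepsilon)})$ controls the first summand of $e_n$ up to an $O(\varepsilon)$ error from the approximation; the second summand is handled via $|F(a)-F(b)|\lesssim (|a|^2+|b|^2)|a-b|$ together with the same multiplier estimate applied directly to $v_n$, using the $L^4_{t,x}$ bound on $v$.

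This yields the first half of \eqref{lpr:2}. For the second half, write $\|(\pd-\alpha)u_n\|_{S(\R)} \leq \|(\pd-\alpha)(u_n-v_n)\|_{S(\R)} + \|(\pd-\alpha)v_n\|_{S(\R)}$; the first term is bounded using the uniform $S$-boundedness of the Mikhlin multiplier $\pd-\alpha$, while the second vanishes by the same approximation-plus-kernel argument applied to $v_n$ itself rather than $F(v_n)$. Finally, the ``moreover'' statement is immediate: when $t_n\equiv 0$ and $\xi_n\equiv 0$, we have $\xi_\infty=0$ and hence $\alpha=m_D(0)=1$, so $\nalf$ coincides with NLS and $v$ is simply the NLS-solution with initial data $\phi$.
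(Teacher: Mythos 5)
Your proposal is correct and follows the same architecture as the paper's proof: the same choice of $v$ (the $\nalf$ solution with $\alpha=m_D(\xi_\infty)$, matched at $t=0$ or via scattering), the same error decomposition $e_n=(\alpha-\pd)\alpha^3F(v_n)+\pd[\alpha^3F(v_n)-F(\pd v_n)]$, the same convergence of initial data, and the same appeal to Lemma~\ref{lm:stab}, with the second half of \eqref{lpr:2} deduced by the same triangle-inequality argument. The one place you diverge is the mechanism for showing $(\pd-\alpha)$ is small on $v_n$ and $F(v_n)$: the paper commutes the Galilei boost outward, splits into frequencies $\le\eps$ and $>\eps$ in the rescaled variables, uses the explicit Lipschitz bound $\|(\alpha-m_D(-i\nabla+\xi_n))P_{\le\eps}\|_{L^2\to L^2}\lesssim\frac{\eps+|\xi_n-\xi_\infty|}{\log_2(2D)}$ on the low piece, and bounds the high piece by $\|P_{>\eps/(8N_n)}v\|_{L^3_tL^6_x}\to0$ directly (no compact-support approximation of $\phi$ is needed). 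Your route via a Fourier-compactly-supported approximation $\phi^{(\eps)}$ and the $L^1$ convergence of the kernel of $(\pd-\alpha)\chi_n(D)$ also works and has the advantage of giving smallness on every $L^p$; its cost is an extra layer of approximation, and one point you should tighten: the nonlinear flow does not preserve compact Fourier support, so $\chi_n$ is \emph{not} literally equal to one on the Fourier support of $T_nv^{(\eps)}$ or of $F(T_nv^{(\eps)})$. You must split $v^{(\eps)}$ into a genuinely band-limited piece (to which the kernel argument applies) plus a high-frequency tail made small in $S(\R)$ via Lemma~\ref{lm:per}, and absorb the resulting cross terms in $F$ using the difference estimate you quote. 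With that adjustment the argument closes.
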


\begin{proof}
Set $\alpha=m_D(\xi_\infty)$.  When $t_n\equiv0$, let $v$ be the global solution to
$\nalf$ with initial data $v(0)=\phi$.  (Note that this choice of $v$ is consistent with the final claim in the lemma.) When $t_n\to \pm\infty$, let $v$ be the global solution to $\nalf$ that scatters to $e^{it\Delta}\phi$ as $t\to \pm \infty$.  By Lemma~\ref{lm:ub} and construction,
\begin{align*}
\|v_n\|_{S(\R)}\lsm 1 \qtq{and} \lim_{n\to\infty}\|v_n(0)-\phi_n\|_2=0;
\end{align*}
here and below, the implicit constant depends only $M$.  Moreover, $v_n$ solves $\pds$ with error
\begin{align*}
e_n&=\alpha^4 F(v_n)-\pd F(\pd v_n)\\
&=(\alpha-\pd)\alpha^3|v_n|^2 v_n+\pd\bigl[\alpha^3 |v_n|^2 v_n-|\pd v_n|^2 \pd v_n].
\end{align*}
We estimate each summand in $e_n$ separately, starting with the first one.  Commuting the Galilei boost outermost, we have
\begin{align}
\|(\alpha-\pd) &\alpha^3 |v_n|^2 v_n\|_{L_t^1L_x^2}\notag\\
&= \alpha^3N_n^3 \bigl\| [\alpha-m_D(-i\nabla +\xi_n)]\bigl\{\bigl(|v|^2 v\bigr)(t_n+tN_n^2, N_n x)\bigr\}\bigr\|_{L_t^1L_x^2}\notag\\
&\le N_n^3 \bigl\|[\alpha-m_D(-i {\nabla}+\xi_n)]P_{\le \eps}\bigl\{\bigl( |v|^2 v\bigr)(t_n+t N_n^2, N_n x)\bigr\}\|_{L_t^1 L_x^2}\label{lnp:3}\\
&\quad +N_n^3\bigl\|[\alpha-m_D(-i{\nabla}+\xi_n)]P_{>\eps}\bigl\{\bigl( |v|^2 v\bigr)v(t_n+t N_n^2, N_n x)\bigr\}\|_{L_t^1 L_x^2}.\label{lnp:4}
\end{align}
Using the trivial Lipschitz bound on $m_D$, we have
\begin{align*}
\|(\alpha-m_D(-i\nabla+\xi_n))P_{\le \eps}\|_{L^2\to L^2}&\lesssim \tfrac1{\log_2(2D)} \|\xi+\xi_n-\xi_\infty\|_{L^\infty_\xi(\{|\xi|\leq2\eps\})}
	\lesssim \tfrac{\eps+|\xi_n-\xi_\infty|}{\log_2(2D)},
\end{align*}
and so may estimate
\begin{align*}
 \eqref{lnp:3}&\le \tfrac{\eps+|\xi_n-\xi_\infty|}{\log_2(2D)} N_n^3 \|(|v|^2 v)(t N_n^2+t_n, N_n x)\|_{L_t^3 L_x^6}^3\\
 &\lsm \tfrac{\eps+|\xi_n-\xi_\infty|}{\log_2(2D)}\|v\|_{L_t^3L_x^6}^3\\
 &\lsm \tfrac{\eps+|\xi_n-\xi_\infty|}{\log_2(2D)}.
 \end{align*}
 Similarly, we estimate
 \begin{align*}
 \eqref{lnp:4}&\lsm N_n^3\bigl\|P_{>\eps}\bigl[(|v|^2 v)(t_n+tN_n^2, N_nx)\bigr]\bigr\|_{L_t^1L_x^2}\\
 &\lsm \|P_{>\frac \eps{N_n}} (|v|^2 v)\|_{L_t^1L_x^2}\\
 &\lsm \|P_{>\frac{\eps}{8N_n}} v\|_{L_t^3L_x^6}\|v\|_{L_t^3L_x^6}^2\\
 &\lsm \|P_{>\frac {\eps}{8N_n}}v\|_{L_t^3L_x^6}.
 \end{align*}
 Putting these pieces together and using $D\geq 1$, we derive
 \begin{align}\label{lpr:5}
 \|(\alpha-\pd) \alpha^3|v_n|^2 v_n\|_{N(\R)}\lsm \eps+|\xi_n-\xi_\infty|+\|P_{>\frac{\eps}{8N_n}}v\|_{L_t^3L_x^6} .
 \end{align}

Next we estimate the second summand in $e_n$. We have
\begin{align*}
\|\pd [\alpha^3 |v_n|^2 v_n -|\pd v_n|^2\pd v_n]\|_{N(\R)}&\lsm \|\alpha^3 |v_n|^2 v_n-|\pd v_n|^2\pd v_n\|_{L_t^{\frac 32}L_x^{\frac 65}}\\
&\lsm \|(\alpha-\pd)v_n\|_{L_t^\infty L_x^2}\|v_n\|_{L_t^3L_x^6}^2\\
&\lsm \|(\alpha-\pd) v_n\|_{L_t^\infty L_x^2}.
\end{align*}
Arguing as above, we obtain
\begin{align*}
\|(\alpha-\pd) v_n\|_{L_t^\infty L_x^2}\le \eps+|\xi_n-\xi_\infty|+\|P_{>\frac \eps{N_n}}v\|_{L_t^\infty L_x^2},
\end{align*}
and so,
\begin{align*}
\|\pd[\alpha^3 |v_n|^2 v_n -|\pd v_n|^2\pd v_n]\|_{N(\R)}\lsm \eps+ |\xi_n-\xi_\infty|+\|P_{>\frac{\eps}{N_n}} v\|_{S(\R)} .
\end{align*}

Putting everything together, we get
\begin{align*}
\|e_n\|_{N(\R)}\lsm \eps+|\xi_n-\xi_\infty| +\|P_{>\frac{\eps}{8N_n}} v\|_{S(\R)}.
\end{align*}
Taking $n\to \infty$ and $\eps\to 0$ we obtain
\begin{align*}
\lim_{n\to \infty} \|e_n\|_{S(\R)}=0.
\end{align*}
Consequently, Lemma~\ref{lm:stab} yields
\begin{align*}
\lim_{n\to \infty} \|u_n-v_n\|_{S(\R)}=0.
\end{align*}

Finally we remark that using
\begin{align*}
\lim_{n\to \infty}\|(\alpha-\pd)\phi_n\|_2=0
\end{align*}
together with \eqref{lpr:5} and the Strichartz inequality, we derive
\begin{align*}
\lim_{n\to \infty}\|(\alpha-\pd)v_n\|_{S(\R)}=0,
\end{align*}
from which \eqref{lpr:2} follows.  This completes the proof of the lemma.
\end{proof}

The structure of nonlinear profiles associated to linear profiles of type III is relatively trivial; however, we state it as a lemma for easier referencing when we use this information in the next section.

\begin{lem}[Case~III nonlinear profiles] \label{lm:npd3}
Fix $M>0$ and let $D\ge D_0(M)$ as above and suppose $\phi\in L^2$ satisfies $\|\phi\|_2\le M$.  Assume also that $N_n\equiv 1$, $\xi_n\equiv 0$, $x_n\in\R^2$, and $t_n\in\R$ with $t_n\equiv 0$ or $t_n\to \pm\infty$.  If $u_n$ denotes the solution to $\pds$ with initial data $\phi_n:=G_n \phi$, then there exists $v\in S(\R)$ with $\|v\|_{S(\R)}\leq C(M)$ so that setting $v_n:=T_n v$ we have
\begin{align}\label{lpr:33}
\lim_{n\to \infty}\|u_n- v_n\|_{S(\R)}=0 .
\end{align}
\end{lem}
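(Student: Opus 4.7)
The plan is to exploit the fact that in Case~III the only parameters that vary with $n$ are the translations $(x_n,t_n)$, and that $\pds$ is exactly invariant under space-time translations (since $\pd$ is a Fourier multiplier and hence commutes with translation). With $N_n\equiv 1$ and $\xi_n\equiv 0$, the operator $T_n$ from \eqref{T defn} reduces to the pure space-time translation
$$
T_n v(t,x) = v(t+t_n,\, x-x_n).
$$
Consequently, whenever $v$ is an exact solution to $\pds$, so is $v_n:=T_n v$, and $\|v_n\|_{S(\R)}=\|v\|_{S(\R)}$.

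The construction of $v$ splits into the two subcases permitted by the hypothesis. If $t_n\equiv 0$, take $v$ to be the global solution to $\pds$ with initial data $\phi$; this solution exists and obeys $\|v\|_{S(\R)}\le C(M)$ by Theorem~\ref{thm:scapds}. In this case $v_n(0,x)=v(0,x-x_n)=\phi(x-x_n)=\phi_n(x)$, so in fact $u_n\equiv v_n$ and the conclusion is trivial. If instead $t_n\to\pm\infty$, choose $v$ to be the solution of $\pds$ that scatters to $e^{it\Delta}\phi$ as $t\to\pm\infty$; the existence of such a $v$ (the ``wave operator'') is a standard consequence of the global space-time bounds in Theorem~\ref{thm:scapds} combined with the small-data theory of Lemma~\ref{lm:loc} (solve $\pds$ on $[T,\infty)$ with data $e^{iT\Delta}\phi$ for large $T$ via contraction, and then flow back).

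In this second subcase, $v_n$ is an exact solution to $\pds$, and its initial data compares to $\phi_n$ via
$$
\|v_n(0)-\phi_n\|_{L^2}=\|v(t_n,\,\cdot\,-x_n)-[e^{it_n\Delta}\phi](\,\cdot\,-x_n)\|_{L^2}=\|v(t_n)-e^{it_n\Delta}\phi\|_{L^2}\longrightarrow 0
$$
by the scattering property of $v$. Applying the perturbation theory of Lemma~\ref{lm:stab} with approximate solution $v_n$, error $e\equiv 0$, and vanishing initial-data error then yields $\|u_n-v_n\|_{S(\R)}\to 0$, which is \eqref{lpr:33}.

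There is no genuine obstacle here, since the Fourier truncation $\pd$ commutes with translation and scaling/Galilei parameters are absent by hypothesis; the only small point worth noting is the invocation of wave operators when $t_n\to\pm\infty$, which is not explicitly isolated earlier but is a direct byproduct of Theorem~\ref{thm:scapds} and Lemma~\ref{lm:loc}.
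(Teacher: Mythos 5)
Your proof is correct and follows essentially the same route as the paper: the same choice of $v$ in each subcase (the solution with data $\phi$ when $t_n\equiv 0$, the wave-operator solution scattering to $e^{it\Delta}\phi$ when $t_n\to\pm\infty$), the same observation that $T_n v$ solves $\pds$ exactly, and the same appeal to Lemma~\ref{lm:stab} via the vanishing initial-data discrepancy $\|v(t_n)-e^{it_n\Delta}\phi\|_{L^2}\to 0$. Your extra remark justifying the existence of wave operators from Theorem~\ref{thm:scapds} and Lemma~\ref{lm:loc} is a welcome detail that the paper leaves implicit.
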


\begin{proof}
If $t_n\equiv0$, let $v$ be the global solution to $\pds$ with initial data $\phi$.  If $t_n\to \pm \infty$, let $v$ be the global solution to $\pds$ which scatters to $e^{it\Delta} \phi$ as $t\to \pm \infty$.  Note that both $T_n v$ and $u_n$ obey the same equation.  When $t_n\equiv 0$, they have the same initial data and so \eqref{lpr:33} holds trivially.  When $t_n\to\pm\infty$, we have instead
$$
\| u_n(0) - [T_n v](0)\|_{L^2(\R^2)} = \| e^{it_n\Delta} \phi - v(t_n) \|_{L^2(\R^2)} \to 0 \qtq{as} n\to\infty
$$
by the construction of $v$.  Thus, \eqref{lpr:33} also holds in that case in view of Lemma~\ref{lm:stab}.
\end{proof}

\begin{proof}[Proof of Theorem~\ref{T:npd}]
Let $u_{0,n}$ be a sequence of $L^2$ functions such that
\begin{align*}
\sup_n\|u_{0,n}\|_2\le M.
\end{align*}
Fix $D\geq D_0(M)$ and let $u_n$ be the global solutions to $\pds$ with initial data $u_n(0)=u_{0,n}$.  By Theorem~\ref{thm:scapds}, $u_n$ satisfies
\begin{align*}
\sup\|u_n\|_{S(\R)}\le C(M).
\end{align*}

Applying the linear profile decomposition Theorem~\ref{thm:lpd} and passing to a subsequence, we write
\begin{align}\label{nfp:00}
u_{0,n}&=\sum_{j=1}^J G_n^j \phi_j+ r_n^J=:\sum_{j=1}\phi_n^j +r_n^J.
\end{align}
with the properties stated in that lemma.  In particular, we have the mass decoupling:
\begin{align}\label{nfp:1}
\lim_{n\to \infty}\|u_{0,n}\|_2^2-\sum_{j=1}^ J\|\phi^j\|_2^2 -\|r_n^J\|_2^2=0 \qtq{for any} 1\le J\le J^*.
\end{align}

We now discuss the nonlinear profiles associated to each linear profile.  As in the statement of the theorem, we write $u_n^j$ for the global solution to $\pds$ with initial data $u_n^j(0)=\phi_n^j$.  The existence of such global solutions is guaranteed by Theorem~\ref{thm:scapds}.

If $j$ conforms to Case~I, then by Lemma~\ref{lm:npd1},
\begin{align}\label{nfp:5}
  \lim_{n\to \infty}\|u_n^j -T_n^j v^j\|_{S(\R)}+\|\pd T_n^j v^j\|_{S(\R)}+\|\pd u_n^j\|_{S(\R)}=0,
\end{align}
where $v^j\in S(\R)$ is independent of $n$.

If $j$ conforms to Case~II, Lemma~\ref{lm:npd2} guarantees that
\begin{align}\label{nfp:6}
 \lim_{n\to \infty}\|u_n^j-T_n^j v^j\|_{S(\R)}+\|(\pd-\alpha)T_n^j v^j\|_{S(\R)}+ \|(\pd-\alpha)u_n^j\|_{S(\R)}=0
\end{align}
for some $v^j\in S(\R)$ independent of $n$.

If $j$ conforms to Case~III, Lemma~\ref{lm:npd3} guarantees that
\begin{equation}\label{nfp:?}
\lim_{n\to \infty}\|u_n^j-T_n^j v^j\|_{S(\R)} =0
\end{equation}
for some $v^j\in S(\R)$ independent of $n$.

The asymptotic orthogonality of parameters implies asymptotic orthogonality of the nonlinear profiles. Indeed, we have

\begin{lem}[Decoupling of nonlinear profiles]\label{lm:dcp} For any $j\neq k$
\begin{gather}
\lim_{n\to \infty}\|T_n^j v^j T_n^k v^k\|_{L_{t,x}^2}=0,\label{dcp:1}\\
\lim_{n\to \infty}\|u_n^j u_n^k\|_{L_{t,x}^2}=0,\label{dcp:2}\\
\lim_{n\to \infty}\|\pd(u_n^j)\pd(u_n^k)\|_{L_{t,x}^2}=0. \label{dcp:3}
\end{gather}
\end{lem}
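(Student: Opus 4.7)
The plan is to establish \eqref{dcp:1} first and then deduce \eqref{dcp:2} and \eqref{dcp:3} by combining it with the Strichartz embedding $S(\R)\hookrightarrow L_{t,x}^4(\R\times\R^2)$ and the case-by-case approximations $u_n^j \approx T_n^j v^j$ furnished by Lemmas~\ref{lm:npd1}--\ref{lm:npd3}.

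For \eqref{dcp:1}, I would use the standard density-plus-change-of-variables argument. The transformation $T_n^j$ is the mass-critical symmetry of $e^{it\Delta}$ associated to its parameters, so in particular it preserves $L_{t,x}^4(\R\times\R^2)$. By density it therefore suffices to consider the case where $v^j$ and $v^k$ are smooth and compactly supported in spacetime. For such profiles, I change variables to convert $T_n^j v^j$ into $v^j$; after this, $T_n^k v^k$ is replaced by a transformation of $v^k$ whose new dilation, modulation, time-shift, and translation parameters are precisely (ratios of) the quantities in the definition of $j\perp k$. The orthogonality then forces either asymptotic separation of supports or a vanishing prefactor, yielding $\|T_n^j v^j \cdot T_n^k v^k\|_{L_{t,x}^2}\to 0$.

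For \eqref{dcp:2}, I decompose
\begin{equation*}
u_n^j u_n^k = T_n^j v^j \cdot T_n^k v^k + (u_n^j - T_n^j v^j)\, u_n^k + T_n^j v^j \, (u_n^k - T_n^k v^k)
\end{equation*}
and apply H\"older together with $\|fg\|_{L_{t,x}^2}\leq \|f\|_{L_{t,x}^4}\|g\|_{L_{t,x}^4} \lesssim \|f\|_{S(\R)}\|g\|_{S(\R)}$. The first term vanishes by \eqref{dcp:1}; the remaining two vanish by Lemmas~\ref{lm:npd1}--\ref{lm:npd3} together with the uniform bound $\|u_n^k\|_{S(\R)}\leq C(M)$ from Theorem~\ref{thm:scapds}. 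For \eqref{dcp:3} the key observation is that $\pd u_n^j$ is itself asymptotically well-approximated in $L_{t,x}^4$ by $T_n^j \tilde v^j$ for a suitable $\tilde v^j$: in Case~I take $\tilde v^j := 0$ (since \eqref{nfp:5} gives $\|\pd u_n^j\|_{S(\R)}\to 0$); in Case~II take $\tilde v^j := m_D(\xi_\infty^j)v^j$ (since \eqref{nfp:6} gives $\|(\pd-m_D(\xi_\infty^j))u_n^j\|_{S(\R)}\to 0$); and in Case~III take $\tilde v^j := \pd v^j$, using that when $N_n\equiv 1$ and $\xi_n\equiv 0$ the operator $T_n^j$ is a pure spacetime translation, which commutes with the Fourier multiplier $\pd$. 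After this substitution, the triangle inequality argument used for \eqref{dcp:2} reduces the claim to $\|T_n^j \tilde v^j \cdot T_n^k \tilde v^k\|_{L_{t,x}^2}\to 0$, which is handled exactly as in \eqref{dcp:1}.

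The main obstacle is \eqref{dcp:1} itself. While classical, its proof requires a careful case analysis across the five distinct orthogonality quantities in the definition of $j\perp k$ to identify which one is responsible for decoupling in each asymptotic regime of the parameters; in particular the Galilei-shifted spatial orthogonality term $N_n^j N_n^k\bigl|x_n^j-x_n^k-2(t_n^j/(N_n^j)^2)(\xi_n^j-\xi_n^k)\bigr|$ is what ensures decoupling once the dilation, modulation, and time-shift orthogonalities have all failed to hold.
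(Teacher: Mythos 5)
Your proposal is correct and follows essentially the same route as the paper: \eqref{dcp:1} via the standard change-of-variables/orthogonality argument (needing only $v^j,v^k\in L^4_{t,x}$), \eqref{dcp:2} by combining \eqref{dcp:1} with the approximations $u_n^j\approx T_n^j v^j$ from Lemmas~\ref{lm:npd1}--\ref{lm:npd3}, and \eqref{dcp:3} by replacing $\pd u_n^j$ with $T_n^j\tilde v^j$ (zero in Case~I, $m_D(\xi_\infty^j)v^j$ in Case~II, $\pd v^j$ in Case~III) and invoking the extended form of \eqref{dcp:1}. Your unified choice of $\tilde v^j$ merely repackages the paper's four-case triangle-inequality argument; all the underlying ingredients are identical.
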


\begin{proof} The proof of \eqref{dcp:1} can be effected by elementary manipulations; see, for example, \cite[Lemma~3.42]{MerleVega}. The argument relies only on the fact that $v^j,v^k\in L^4(\R\times\R^2)$; we will exploit this greater generality below.

Claim \eqref{dcp:2} follows from \eqref{dcp:1} and the asymptotic agreement between $u_n^j $ and $T_n^j v^j$ provided by \eqref{nfp:5}, \eqref{nfp:6}, and \eqref{nfp:?}.

It remains to prove \eqref{dcp:3}.  Exploiting the $j\leftrightarrow k$ symmetry, we see that it suffices to consider four cases:

Case 1: $j$ conforms to Case I.  In this case, using \eqref{nfp:5} we get
\begin{align*}
\|(\pd u_n^j)(\pd u_n^k)\|_{L_{t,x}^2} &\le \|\pd u_n^j\|_{L_{t,x}^4}\|u_n^k\|_{L_{t,x}^4} \leq C(M) \|\pd u_n^j\|_{L_{t,x}^4}\to 0
\end{align*}
as $n\to \infty$.

Case 2: Both $j$ and $k$ conform to Case II.  By the triangle inequality,
\begin{align*}
&\|(\pd u_n^j)(\pd u_n^k)\|_{L_{t,x}^2}\\
&\quad\le \|[\pd-m_D(\xi^j)]u_n^j (\pd u_n^k)\|_{L_{t,x}^2}\\
&\qquad + m_D(\xi^j)\|u_n^j [\pd-m_D(\xi^k)]u_n^k\|_{L_{t,x}^4}+m_D(\xi^j)m_D(\xi^k)\|u_n^j u_n^k\|_{L_{t,x}^2}\\
&\quad\le C(M) \|[\pd-m_D(\xi^j)]u_n^j\|_{S(\R)} + C(M) \|[\pd-m_D(\xi^k)]u_n^k\|_{S(\R)} + \|u_n^j u_n^k\|_{L_{t,x}^2},
\end{align*}
which converges to zero as $n\to \infty$ by \eqref{nfp:6} and \eqref{dcp:2}.

Case 3:  $j$ conforms to Case II and $k$ conforms to Case III.  In this case we have
$$
\lim_{n\to \infty}\|\pd u_n^k-T_n^k(\pd v^k)\|_{L_{t,x}^4}=0
$$
and so,
\begin{align*}
\|\pd u_n^j \pd u_n^k\|_{L_{t,x}^2}
&\le \|[\pd-m_D(\xi^j)]u_n^j \pd u_n^k\|_{L_{t,x}^2}+m_D(\xi^j)\|(u_n^j-T_n^j v^j)\pd u_n^k\|_{L_{t,x}^2}\\
&\quad+m_D(\xi^j)\|T_n^j v^j (\pd u_n^k-T_n^k (\pd v^k))\|_{L_{t,x}^2}\\
&\quad +m_D(\xi^j)\|T_n^j v^j T_n^k (\pd v^k)\|_{L_{t,x}^2}\\
&\le C(M) \|[\pd-m_D(\xi^j)]u_n^j\|_{L_{t,x}^4} + C(M)  m_D(\xi^j)\|u_n^j-T_n^j v^j\|_{L_{t,x}^4} \\
&\quad +C(M)m_D(\xi^j)\|\pd u_n^k-T_n^k(\pd v^k)\|_{L_{t,x}^4}\\
&\quad+m_D(\xi^j)\|T_n^j v^j T_n^k (\pd v^k)\|_{L_{t,x}^2},
\end{align*}
which converges to zero as $n\to \infty$ by \eqref{nfp:6} and the extended version of \eqref{dcp:1}.

Case 4: Both $j$ and $k$ conform to Case III.  In this case we have
$$
\lim_{n\to \infty}\|\pd u_n^j-T_n^j(\pd v^j)\|_{L_{t,x}^4}+\lim_{n\to \infty}\|\pd u_n^k-T_n^k(\pd v^k)\|_{L_{t,x}^4}=0.
$$
The claim follows again from the extended version of \eqref{dcp:1}.

This completes the proof of Lemma~\ref{lm:dcp}.
\end{proof}

To continue with the proof of Theorem~\ref{T:npd}, we define
\begin{align}\label{nfp:80}
u_n^J:=\sum_{j=1}^J u_n^j +e^{it\Delta} r_n^J.
\end{align}
In view of Lemma~\ref{lm:stab}, the theorem will follow if we can verify the following three claims about $u^J_n$:

Claim 1.  $\|u_n^J(0)-u_n(0)\|_2\to 0$ as $n\to \infty$, for any $J$.

Claim 2. $\limsup _{n\to \infty}\|u_n^J\|_{L_{t,x}^4}\lsm_M 1$ uniformly in $J$.

Claim 3. $\lim_{J\to \infty}\limsup_{n\to\infty}\|(i\partial_t+\Delta) u_n^J-\pd F(\pd u_n^J)\|_{N(\R)}=0. $

The first claim is trivial: by construction $u_n(0)=u_n^J(0)$.

To prove Claim 2, we first show that
\begin{align}\label{nfp:8}
\sum_{j=1}^J \|u_n^j\|_{L_{t,x}^4}^4\lsm_M 1\qtq{uniformly in} J.
\end{align}
Indeed, from the mass decoupling \eqref{nfp:1}, we know that
\begin{align}\label{nfp:9}
\sum_{j=1}^{J^*} \mathcal M(u_n^j)=\sum_{j=1}^{J^*}\mathcal M(\phi^j) < \infty.
\end{align}
Therefore, there exists $J_0$ such that for $j>J_0$ we have $\mathcal M(\phi^j)< \eps_0$, where $\eps_0$ is the constant in the small data theory presented in Lemma~\ref{lm:loc}.  This lemma then guarantees that
\begin{align*}
\|u_n^j\|_{L_{t,x}^4}^4\lsm \mathcal M(\phi^j)^2 \qtq{for all} j\ge J_0.
\end{align*}
As $J_0$ is finite, we can find a uniform constant depending only on $M$ such that
\begin{align}\label{nfp:10}
\|u_n^j\|_{L_{t,x}^4}^4\lsm_M \mathcal M(\phi^j) \qtq{for all} j\ge 1.
\end{align}
Combining this with \eqref{nfp:9} and \eqref{nfp:10}, we derive \eqref{nfp:8}.  Consequently,
\begin{align*}
\|u_n^J\|_{L_{t,x}^4}^4&\lsm \biggl\|\sum_{j=1}^J u_n^j\biggr\|_{L_{t,x}^4}^4+\|e^{it\Delta}r_n^J\|_{L_{t,x}^4}^4\\
&\lsm \sum_{j=1}^J \|u_n^j\|_{L_{t,x}^4}^4+C(J)\sum_{j\neq k}\iint_{\R^2} |u_n^j||u_n^k|^3+M^4\\
&\lsm_M 1+C(J)\sum_{j\neq k}\|u_n^j u_n^k\|_{L_{t,x}^2}\\
&\lsm_M 1+C(J)o(1) \qtq{as} n\to \infty.
\end{align*}
This completes the proof of Claim 2.

Next we verify Claim 3. A direct computation yields
\begin{align*}
(i\partial_t+\Delta) u_n^J -\pd F(\pd u_n^J)
&=\sum_{j=1}^J \pd F(\pd u_n^j)-\pd F\biggl(\pd\sum_{j=1}^J u_n^j\biggr)\\
&\quad +\pd F(\pd(u_n^J-e^{it\Delta}r_n^J))-\pd F(\pd u_n^J).
\end{align*}
We estimate the two summands separately.  By Lemma \ref{lm:dcp},
\begin{align*}
\biggl\|\sum_{j=1}^J\pd F(\pd u_n^j)-\pd F\biggl(\pd \sum_{j=1}^J u_n^j\biggr)\biggr\|_{N(\R)}
&\lsm_J \sum_{j\neq k}\|\pd u_n^j |\pd u_n^k|^2\|_{L_{t,x}^{\frac 43}}\\
&\lsm_J \sum_{j\neq k}\|\pd u_n^j \pd u_n^k\|_{L_{t,x}^2}\|\pd u_n^k\|_{L_{t,x}^4}\\
&\lsm_{J,M} o(1) \qtq{as} n\to \infty.
\end{align*}
Thus for any $J$,
\begin{align*}
\lim_{n\to\infty}\biggl\|\sum_{j=1}^J\pd F(\pd u_n^j)-\pd F\biggl(\pd\sum_{j=1}^J u_n^j\biggr)\biggr\|_{N(\R)}=0.
\end{align*}
We now turn to estimating the second term in the error. We have
\begin{align*}
\|\pd F(\pd(u_n^J&-e^{it\Delta}r_n^J))-\pd F(\pd u_n^J)\|_{N(\R)}\\
&\le \|\pd e^{it\Delta}r_n^J\|_{L_{t,x}^4}\bigl (\|u_n^J\|_{L_{t,x}^4}^2+\|e^{it\Delta}r_n^J\|_{L_{t,x}^4}^2\bigr)\\
&\lsm_M \|e^{it\Delta} r_n^J\|_{L_{t,x}^4},
\end{align*}
which converges to zero as $n\to \infty$ and $J\to \infty$.  This completes the proof of Claim~3 and so also the proof of Theorem~\ref{T:npd}.
\end{proof}

\section{Approximation in the weak $L^2$ topology}\label{S:6}

In \cite[\S5]{BG99}, Bahouri and G\'erard show how a nonlinear profile decomposition can be used to prove well-posedness in the weak topology (in the setting of the energy-critical wave equation).  The analogue of their result for (NLS) would be the following:

\begin{thm}[Well-posedness in the weak topology]\label{BG-like}
Suppose we have (finite-mass) solutions $u_n$ and $u_\infty$ to NLS that satisfy
\begin{align*}
u_n(0) \rightharpoonup u_{\infty}(0) \text{ weakly in $L^2(\R^2)$}.
\end{align*}
Then
\begin{align*}
u_n(t)\rightharpoonup u_\infty(t) \quad\text{weakly in $L^2(\R^2)$}
\end{align*}
for all $t\in\R$.
\end{thm}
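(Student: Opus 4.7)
The plan is to run the Bahouri--G\'erard \cite{BG99} strategy, using Keraani's \cite{Keraani} nonlinear profile decomposition for \eqref{NLS}. By a routine subsequential/Urysohn reduction, it suffices to prove that, for an arbitrary $t\in\R$ and $\psi\in L^2(\R^2)$, $\langle\psi,u_n(t)\rangle\to\langle\psi,u_\infty(t)\rangle$ along an arbitrary subsequence.

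First I would apply Theorem~\ref{thm:lpd} to $u_n(0)$, choosing the trivial first tuple $(N_n^1,\xi_n^1,x_n^1,t_n^1)\equiv(1,0,0,0)$, so that $G_n^1=\mathrm{Id}$ and consequently $\phi^1=\wlim u_n(0)=u_\infty(0)$. For each $j\geq 2$, the prescription $\phi^j=\wlim(G_n^j)^{-1} r_n^{j-1}$, together with the inductive fact that $r_n^{j-1}\rightharpoonup 0$, forces the parameters of $G_n^j$ to escape to infinity (in at least one of scale, frequency, translation, or time) --- otherwise $\phi^j$ would equal $\wlim r_n^{j-1}=0$, violating the non-triviality assertion of Theorem~\ref{thm:lpd}. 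As a consequence, $G_n^j\phi^j\rightharpoonup 0$ in $L^2$ for every $j\geq 2$, and the tail $r_n^J\rightharpoonup 0$ in $L^2$ for every fixed $J$.

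Next I invoke Keraani's nonlinear profile decomposition --- the analogue of Theorem~\ref{T:npd} for \eqref{NLS} --- to write
\begin{equation*}
u_n(t)=u_\infty(t)+\sum_{j=2}^J u_n^j(t)+e^{it\Delta}r_n^J+\varepsilon_n^J(t),
\end{equation*}
where $u_n^j$ is the \eqref{NLS} solution with initial data $G_n^j\phi^j$, global and uniformly space-time bounded by Theorem~\ref{T:Dodson}, and each $u_n^j$ admits the description $u_n^j=T_n^j v^j+o_{L^\infty_t L^2_x}(1)$ for $T_n^j$ as in \eqref{T defn} and $v^j$ a fixed $L^2$-valued \eqref{NLS} solution. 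The stability theory for \eqref{NLS}, combined with the asymptotic orthogonality of the nonlinear profiles (the analogue of Lemma~\ref{lm:dcp}), should upgrade the native $L^4_{t,x}$ smallness of the error to $\lim_{J\to\infty}\limsup_{n\to\infty}\|\varepsilon_n^J\|_{L^\infty_t L^2_x(\R\times\R^2)}=0$.

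Finally I would test this identity against $\psi$ at time $t$. For each fixed $j\geq 2$, the weak convergence $\langle\psi,T_n^j v^j(t)\rangle\to 0$ follows by testing against a dense subclass (say Schwartz $\psi$): the operator $T_n^j$ is a composition of unitary symmetries (rescaling, translation, Galilei boost, time translation) at least one of whose parameters escapes, so dominated convergence, the Riemann--Lebesgue lemma, or the $L^1\to L^\infty$ dispersive decay estimate disposes of it as appropriate. The tail is handled via $\langle\psi,e^{it\Delta}r_n^J\rangle=\langle e^{-it\Delta}\psi,r_n^J\rangle\to 0$ for each fixed $J$. Sending $n\to\infty$ and then $J\to\infty$ yields the desired conclusion. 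The principal obstacle I anticipate is upgrading the smallness of $\varepsilon_n^J$ from $L^4_{t,x}$ (the native output of Keraani-type constructions) to $L^\infty_t L^2_x$; this rests on a perturbation argument treating $u_\infty+\sum_{j\geq 2}u_n^j+e^{it\Delta}r_n^J$ as an approximate \eqref{NLS} solution, whose residual cross-terms are controlled via nonlinear orthogonality and Dodson's global space-time bounds (Theorem~\ref{T:Dodson}).
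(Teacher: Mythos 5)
Your proposal is correct and is essentially the argument the paper has in mind: the paper omits the proof of Theorem~\ref{BG-like}, noting that it follows the Bahouri--G\'erard scheme via Keraani's nonlinear profile decomposition and is reconstructed from the (more complicated) proof of Theorem~\ref{thm:wc}, which proceeds exactly as you do --- a distinguished profile $u_\infty(0)$ with trivial parameters, asymptotic orthogonality of the remaining tuples to the trivial one, a nonlinear profile decomposition with error small in $S(\R)\supset L^\infty_t L^2_x$, and weak vanishing of the transformed profiles and of the linear remainder. The only cosmetic difference is that the paper applies the linear profile decomposition to $f_n:=u_n(0)-u_\infty(0)$ and then prepends the profile $u_\infty(0)$, verifying the needed orthogonality by precisely your ``otherwise the extracted profile would be the weak limit of $r_n^{j-1}$, hence zero'' argument, rather than ``choosing'' the first tuple inside Theorem~\ref{thm:lpd}.
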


For our purposes, we need a variant of this type of statement, where the underlying equation changes as well.  We omit the proof of Theorem~\ref{BG-like} since it is easily reconstructed from the (more complicated) proof of this variant:

\begin{thm}[Approximation in the weak topology]\label{thm:wc}
Fix $M>0$, $D\ge D_0(M)$, and a sequence $M_n\to \infty$.  Assume that $\{u_{0,n}\}\subset L^2$ satisfy
\begin{align*}
\|u_{0,n}\|_2\le M \qtq{and}  u_{0,n}\rightharpoonup u_{0,\infty} \text{ weakly in $L^2$}.
\end{align*}
Let $u_n$ be the solutions to
\begin{align*}(\pnns)\quad
\begin{cases}
(i\partial_t+\Delta) u_n=\pmn F(\pmn u_n)\\
u_n(0,x)=u_{0,n}(x),
\end{cases}
\end{align*}
where $\pmn$ denotes the Fourier multiplier with symbol $m_D(\xi/M_n)$ and let $u_\infty$ be the global solution to NLS with initial data $u_{0,\infty}$. Then $u_n$ are defined globally in time and for any $t\in \R$,
\begin{align*}
u_n(t)\rightharpoonup u_\infty(t) \quad\text{weakly in } L^2(\R^2).
\end{align*}
\end{thm}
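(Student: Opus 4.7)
The plan is to mount a Bahouri--G\'erard-style argument, adapting the nonlinear profile decomposition of Section~\ref{S:5} from a fixed $D$ to the sliding cutoffs $\pmn$. First, by the $L^2$-critical rescaling $v_n(t,x):=M_n^{-1}u_n(M_n^{-2}t,M_n^{-1}x)$, each $v_n$ solves $\pds$ with $\|v_n(0)\|_2\le M$, so Theorem~\ref{thm:scapds} yields the uniform bound $\|u_n\|_{S(\R)}=\|v_n\|_{S(\R)}\le C(M)$, and in particular each $u_n$ is global. Passing to a subsequence and applying Theorem~\ref{thm:lpd} to $u_{0,n}$, we augment the decomposition with a trivial profile if necessary so that one profile---call it $\phi^1$---has $N_n^1\equiv 1$, $\xi_n^1\equiv 0$, $x_n^1\equiv 0$, $t_n^1\equiv 0$; then $\phi^1=\wlim(G_n^1)^{-1}u_{0,n}=u_{0,\infty}$, every other $\phi_n^j=G_n^j\phi^j$ satisfies $\phi_n^j\rightharpoonup 0$ in $L^2$ (some parameter degenerates to $0$ or $\infty$), and $r_n^J\rightharpoonup 0$ in $L^2$ for every fixed~$J$.

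Let $u_n^j$ be the solution of $\pnns$ with initial data $\phi_n^j$. For $j=1$, the NLS solution $u_\infty$ with data $u_{0,\infty}$ (global by Theorem~\ref{T:Dodson}) is an approximate solution of $\pnns$: its equation error $\pmn F(\pmn u_\infty)-F(u_\infty)$ tends to zero in $N(\R)$ because $\pmn\to I$ strongly on $L^2$ as $M_n\to\infty$, so Lemma~\ref{lm:stab} yields $\|u_n^1-u_\infty\|_{S(\R)}\to 0$. For $j\ge 2$, we revisit Lemmas~\ref{lm:npd1}--\ref{lm:npd3} with $\pmn$ in place of $\pd$: conjugating by $T_n^j$, the effective multiplier on the profile's frequencies becomes $m_D((\xi_n^j+N_n^j\xi)/M_n)$, which on compact sets in $\xi$ converges to $0$, to~$1$, or to a constant $m_D(\xi_\infty)\in(0,1]$ depending on how $N_n^j$ and $|\xi_n^j|$ compare to $M_n$---yielding the free Schr\"odinger flow, the full NLS, or $\nalf$ as the governing equation in the limit. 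In each case Lemma~\ref{lm:stab} produces a fixed $v^j\in S(\R)$ with $\|v^j\|_{S(\R)}\le C(M)$ so that $\|u_n^j-T_n^j v^j\|_{S(\R)}\to 0$. We then imitate the proof of Theorem~\ref{T:npd}: setting
\begin{equation*}
\tilde w_n^J:=u_n^1+\sum_{j=2}^J u_n^j+e^{it\Delta}r_n^J,
\end{equation*}
mass decoupling and the small-data theory (Lemma~\ref{lm:loc}) give $\sup_n\|\tilde w_n^J\|_{L^4_{t,x}}\lsm_M 1$; the analogue of Lemma~\ref{lm:dcp}, which relies only on asymptotic orthogonality of parameters and the structural descriptions just obtained, shows that the equation error of $\tilde w_n^J$ is $o(1)$ in $N(\R)$ as $n\to\infty$, then $J\to\infty$; and since $\tilde w_n^J(0)=u_n(0)$, Lemma~\ref{lm:stab} concludes that $\lim_{J\to\infty}\limsup_{n\to\infty}\|u_n-\tilde w_n^J\|_{S(\R)}=0$.

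Fixing $\ell\in L^2(\R^2)$ and $t\in\R$ and pairing with $u_n(t)$, the $j=1$ contribution converges to $\langle\ell,u_\infty(t)\rangle$ by the strong convergence above; each $j\ge 2$ contribution tends to zero because $T_n^j v^j(t)\rightharpoonup 0$ pointwise in $t$ (the nontrivial parameters disperse $T_n^j v^j(t)$ in physical or Fourier space); $\langle\ell,e^{it\Delta}r_n^J(t)\rangle=\langle e^{-it\Delta}\ell,r_n^J\rangle\to 0$ since $r_n^J\rightharpoonup 0$ and $e^{-it\Delta}\ell\in L^2$; and the $S(\R)$-error is absorbed by the previous display. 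The main obstacle lies in this adaptation of the nonlinear profile analysis: the identification of the limit equation solved by each $T_n^j$-conjugated profile depends delicately on the relative size of $N_n^j$, $|\xi_n^j|$, and $M_n$, and there are now several sub-cases beyond those of Lemmas~\ref{lm:npd1}--\ref{lm:npd3} (e.g.\ $N_n^j\to\infty$ with $N_n^j/M_n$ bounded or unbounded), each requiring one to show that $\pmn$ minus its effective limit is small in a topology tailored to the profile's frequency support. The uniform Lipschitz estimate \eqref{mdbound2} on $m_D$, the uniform space-time bounds from Theorem~\ref{thm:scapds}, and the pointwise convergence $m_D(\xi/M_n)\to 1$ on compact sets together supply the needed control.
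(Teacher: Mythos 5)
Your overall architecture (uniform bounds via rescaling, linear profile decomposition, nonlinear profiles, decoupling, pairing with $\ell$) is sound, but the step you yourself flag as ``the main obstacle'' is the entire content of the theorem, and your sketch of it contains an error. You assert that after conjugating by $T_n^j$ the symbol $m_D((\xi_n^j+N_n^j\xi)/M_n)$ always converges on compact sets to a \emph{constant} ($0$, $1$, or $m_D(\xi_\infty)$), so that every nonlinear profile is asymptotically governed by the free flow, NLS, or $\nalf$. This fails when $N_n^j/M_n\to c\in(0,\infty)$ and $\xi_n^j/M_n\to\zeta$: the conjugated symbol then converges to $m_D(\zeta+c\xi)$, which is genuinely non-constant (the bound \eqref{mdbound2} only controls its variation by $O(1/\log_2 D)$ with $D$ fixed, not by $o(1)$ in $n$), and the limiting dynamics is a fixed-multiplier equation of $\pds$ type rather than any of your three. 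This case cannot be excluded --- take $u_{0,n}=u_{0,\infty}+M_n\psi(M_nx)$ --- and omitting it breaks both the profile-by-profile analysis and the decoupling step, which for such profiles requires a statement of the form $\pmn u_n^j-T_n^j(\pd v^j)\to0$ in $L^4_{t,x}$ rather than $(\pmn-\alpha)u_n^j\to0$. Beyond this, carrying out ``the analogue of Lemmas~\ref{lm:npd1}--\ref{lm:npd3} with $\pmn$ in place of $\pd$'' across all sub-cases is a substantial piece of analysis that the proposal does not actually perform.

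The difficulty disappears if you exploit the rescaling you already introduced to obtain the uniform bounds. Instead of undoing $v_n(t,x)=M_n^{-1}u_n(M_n^{-2}t,M_n^{-1}x)$ before decomposing, apply the linear profile decomposition to the rescaled data: decompose $M_n^{-1}(u_{0,n}-u_{0,\infty})(M_n^{-1}\cdot)$ and adjoin the extra profile $M_n^{-1}u_{0,\infty}(M_n^{-1}\cdot)$ with parameters $N_n^0=M_n^{-1}$, $\xi_n^0=x_n^0=t_n^0=0$ (checking its asymptotic orthogonality to the others, which follows from $u_{0,n}-u_{0,\infty}\rightharpoonup0$). Since $v_n$ solves the \emph{fixed}-$D$ equation $\pds$, Theorem~\ref{T:npd} and Lemmas~\ref{lm:npd1}--\ref{lm:npd3} apply verbatim --- no new case analysis is needed, because the troublesome $N_n^j\asymp M_n$ regime becomes, after the normalizations in the proof of Theorem~\ref{thm:lpd}, exactly Case~III, which Lemma~\ref{lm:npd3} already handles --- and the distinguished profile falls under Case~II with $\xi_\infty=0$, for which Lemma~\ref{lm:npd2} identifies the limiting dynamics as NLS with data $u_{0,\infty}$. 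Undoing the rescaling and using orthogonality of parameters to kill the remaining profiles and the remainder in the weak topology then yields the conclusion. If you insist on working in unrescaled variables you must supply the missing Case~III-type lemma and its decoupling counterpart; the two routes are equivalent in substance, but only the rescaled one lets you cite Section~\ref{S:5} without reproving it.
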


\begin{proof}
Recurrent objects in this argument will be the parameters
$$
N_n^0:=M_n^{-1},\quad \xi_n^0\equiv 0,\quad x_n^0\equiv 0,\qtq{and} t_n^0\equiv 0,
$$
together with the associated operators
$$
G_n^0 f := M_n^{-1} f(M_n^{-1} x) \qtq{and} [T_n^0 v](t,x) :=  M_n^{-1} v( M_n^{-2} t , M_n^{-1} x)
$$
defined as in \eqref{G defn} and \eqref{T defn}.

Let $w_n$ denote the solution to
\begin{align*}
 \begin{cases}
 (i\partial_t+\Delta )w_n=\pd F(\pd w_n),\\
 w_n(0)=G_n^0 u_{0,n}.
 \end{cases}
\end{align*}
By Theorem~\ref{thm:scapds} this solution is global and so
$$
u_n := (T_n^0)^{-1} w_n
$$
is the solution to the initial value problem $(\pnns)$.  This proves that $u_n$ is global, as stated in the theorem.

Let $f_n:=G_n^0 (u_{0,n}-u_{0,\infty})=w_n(0) - G_n^0 u_{0,\infty}$, which by assumption satisfies
\begin{align}\label{wc:5}
(G_n^0)^{-1} f_n\rightharpoonup 0 \quad\text{weakly in } L^2.
\end{align}
Applying the  linear profile decomposition to $\{f_n\}$, we write
\begin{align}\label{wc:5.1}
f_n=\sum_{j=1}^J G_n^j \phi^j +r_n^J
\end{align}
with the properties stated in Theorem~\ref{thm:lpd}. In  particular, for each $\phi^j$ we have
\begin{align}\label{wc:5.2}
\phi^j=\wlim_{n\to \infty}{}(G_n^j)^{-1} r_n^{j-1},
\end{align}
with the convention $r_n^0=f_n$.  Further, let $\phi^0:= u_{0,\infty}$.  We now claim that
\begin{align}\label{wc:7.0}
 w_n(0)=\sum_{j=0}^ J G_n^j\phi^j +r_n^J
\end{align}
provides a linear profile decomposition of $w_n(0)$, with all the attributes given in Theorem~\ref{thm:lpd} with just one exception: in this setting, it is possible that $\phi^0\equiv 0$.  The justification of this claim is elementary once one affirms asymptotic orthogonality of parameters, which in turn means showing
\begin{align}\label{wc:6}
 (N_n^j, \xi_n^j,x_n^j, t_n^j)\perp (N_n^0, \xi_n^0, x_n^0, t_n^0) \qtq{for each} j\geq 1.
\end{align}

Let us pause to verify \eqref{wc:6}.  Supposing that it failed, we choose $k\geq 1$ to be the \emph{first} witness.  Then by \eqref{wc:5.2},
\begin{align*}
0\neq \phi^{k}&=\wlim_{n\to\infty} (G_n^{k})^{-1} r_n^{k-1}\\
  &=\wlim_{n\to \infty}(G_n^{k})^{-1}\biggl[f_n-\sum_{j=1}^{k-1}G_n^j\phi^j\biggr]\\
  &=\wlim_{n\to \infty} (G_n^{k})^{-1}G_n^0 \bigl[(G_n^0)^{-1} f_n\bigr] - \sum_{j=1}^{k-1} \wlim_{n\to \infty}(G_n^{k})^{-1}G_n^j\phi^j\\
  &=0.
\end{align*}
To deduce the last step we used the following: 1) the operators $(G_n^{k})^{-1}G_n^0$ converge strong-$*$ as $n\to\infty$ by assumption and  \eqref{wc:5} and 2) the operators $(G_n^{k})^{-1}G_n^j$ converge weakly to zero (as $n\to\infty$) due to orthogonality of the parameters $1\leq j < k$.

Continuing from the linear profile decomposition \eqref{wc:7.0}, we obtain an associated nonlinear profile decomposition of $w_n$ via Theorem~\ref{T:npd}:
$$
w_n(t,x) = \sum_{j=0}^J w_n^j + e^{it\Delta} r_n^J + o_{S(\R)}(1) \quad \text{as $n\to \infty$ and $J\to \infty$}.
$$
Next we employ Lemmas~\ref{lm:npd1}, \ref{lm:npd2}, and~\ref{lm:npd3} to replace $w_n^j$ by $n$-dependent transformations applied to fixed functions $v^j$, up to a negligible error.  Note that the zeroth profile conforms to Case~II and Lemma~\ref{lm:npd2} shows that one may take $v^0=u_\infty$.  After these reductions, we deduce that $u_n=(T_n^0)^{-1}  w_n$ obeys
$$
\lim_{J\to\infty} \lim_{n\to\infty} \biggl\| u_n(t,x) - \biggl[ u_\infty(t,x) + \sum_{j=1}^J [(T_n^0)^{-1} T_n^j v^j](t,x) + [(T_n^0)^{-1} e^{it\Delta} r_n^J](x) \biggr\|_{S(\R)} = 0.
$$

By virtue of \eqref{wc:6} we have that
$$
\wlim_{n\to\infty} [(T_n^0)^{-1} T_n^j v^j](t) = 0
$$
in $L^2(\R^2)$ for each $t\in\R$.  Thus, to complete the proof of Theorem~\ref{thm:wc} we need only show that
$$
[(T_n^0)^{-1} e^{it\Delta} r_n^J](x) = e^{it\Delta} (G_n^0)^{-1} r_n^J = e^{it\Delta} \biggl[ (G_n^0)^{-1} f_n - \sum_{j=1}^J (G_n^0)^{-1} G_n^j \phi^j \biggr]
$$
converges weakly to zero as $n\to\infty$ for each fixed $J\geq 1$ and $t\in\R$.  The last equality written here is an application of \eqref{wc:5.1} and provides the key to verifying this assertion.  Indeed, we simply apply \eqref{wc:5} and use \eqref{wc:6} to see that $(G_n^0)^{-1} G_n^j\to 0$ in the weak operator topology.
\end{proof}

 \section{Local-in-time dispersive estimates on the torus}\label{S:7}

In this section we establish local-in-time dispersive and Strichartz estimates for the linear Schr\"odinger flow on the torus. We will show that if the size of the torus is sufficiently large depending on the frequency localization and time for which we seek the estimates, we have the full range of Strichartz estimates.

We will use the following lemma.

\begin{lem}[Midpoint rule error]\label{lm:diffcell}
Given $\xi_0\in\R^d$ and $L>0$, let $Q$ denote the cube $Q=\xi_0+[-\tfrac 1{2L},\tfrac1{2L})^d$ of side-length $L^{-1}$ centered at $\xi_0$. Then
\begin{align}\label{diffcell:1}
\biggl|\int_{Q} h(\xi) \,d\xi -\tfrac 1{L^d} h(\xi_0)\biggr| \lsm \tfrac{1}{L^{d+2}} \|\partial^2 h\|_{L^\infty(Q)}.
\end{align}
\end{lem}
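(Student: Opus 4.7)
The plan is to reduce the bound to a Taylor expansion of $h$ around the center $\xi_0$ of the cube $Q$, exploiting the symmetry of $Q$ to kill the linear term. Concretely, I would write
\begin{equation*}
h(\xi) = h(\xi_0) + \nabla h(\xi_0)\cdot(\xi-\xi_0) + R(\xi),
\end{equation*}
where the remainder admits the pointwise bound $|R(\xi)| \lesssim |\xi-\xi_0|^2\,\|\partial^2 h\|_{L^\infty(Q)}$ by the integral form of Taylor's theorem (e.g.\ $R(\xi)=\int_0^1(1-s)\,\partial^2 h(\xi_0+s(\xi-\xi_0))[\xi-\xi_0,\xi-\xi_0]\,ds$).

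Next I would integrate this identity over $Q$. The constant term contributes $|Q|\,h(\xi_0)=L^{-d}h(\xi_0)$, which is exactly the quantity being subtracted. The linear term vanishes because $Q$ is symmetric about $\xi_0$: for each coordinate $k$, the integral $\int_Q (\xi_k-\xi_{0,k})\,d\xi$ is zero by the odd symmetry of the integrand on $[-\tfrac{1}{2L},\tfrac{1}{2L})$.

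It only remains to bound the remainder integral, which is the crux (and the only analytic content):
\begin{equation*}
\biggl|\int_Q R(\xi)\,d\xi\biggr| \lesssim \|\partial^2 h\|_{L^\infty(Q)} \int_Q |\xi-\xi_0|^2\,d\xi \lesssim \|\partial^2 h\|_{L^\infty(Q)}\cdot L^{-d-2},
\end{equation*}
since $|\xi-\xi_0|\lesssim L^{-1}$ on $Q$ and $|Q|=L^{-d}$. Combining the three contributions yields \eqref{diffcell:1}. There is no real obstacle here; the only subtlety is making sure the symmetry of the cube is used to eliminate the first-order term so that the bound is genuinely quadratic in the cell size, giving the factor $L^{-d-2}$ rather than the trivial $L^{-d-1}$ that would come from a mere Lipschitz estimate.
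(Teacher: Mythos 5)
Your proof is correct and is essentially the same as the paper's: a second-order Taylor expansion about the cube's center, with the linear term annihilated by the symmetry of $Q$ about $\xi_0$ (a step the paper leaves implicit but you rightly make explicit) and the quadratic remainder integrated to give the factor $L^{-d-2}$.
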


\begin{proof} Performing a Taylor expansion, we get
\begin{align*}
h(\xi)=h(\xi_0)+\nabla h(\xi_0)\cdot (\xi-\xi_0)+r(\xi) \qtq{with} |r(\xi)|\lsm \|\partial^2 h\|_\infty |\xi-\xi_0|^2.
\end{align*}
Thus,
\begin{align*}
\biggl|\int_Q h(\xi) \,d\xi &-\tfrac 1{L^d}h(\xi_0)\biggr| =\biggl|\int_Q r(\xi)\,d\xi \biggr|\lsm \tfrac{1}{L^{d+2}} \|\partial^2 h\|_{L^\infty(Q)}.
\end{align*}
This completes the proof of the lemma.
\end{proof}

\begin{prop}[Local-in-time dispersive and Strichartz estimates]\label{pp:stri}
Given $T>0$ and $1\leq N\in 2^\Z$, there exists $L_0=L_0(T,N)\geq 1$ sufficiently large so that for $L\ge L_0$,
\begin{align}
&\|e^{it\Delta}P^L_{\le N}f\|_{L_x^{\infty}(\T_L)}\lsm |t|^{-1}\|f\|_{L_x^1(\T_L)} \quad\text{uniformly for $t\in [-T,T]\setminus\{0\}$},\label{str:01}\\
&\|e^{it\Delta }P_{\le N}^L f\|_{L_t^q L_x^r([-T,T]\times\T_L)}\lsm_{p,q} \|f\|_{L_x^2(\T_L)}. \label{str:02}
\end{align}
Here, $\T_L=\R^2/L\Z^2$, $(q,r)$ is a Schr\"odinger admissible pair, in the sense that
\begin{align*}
\tfrac 2q+\tfrac 2r=1 \qtq{with} 2< q \leq \infty,
\end{align*}
and $P_{\le N}^L$ denotes the Fourier multiplier $P_{\leq N}$ on $\T_L$.
\end{prop}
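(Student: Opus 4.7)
On $\T_L = \R^2/L\Z^2$, the propagator $e^{it\Delta}P_{\leq N}^L$ acts as convolution with the Riemann-type kernel
\[
K_t^L(x) = \tfrac{1}{L^2} \sum_{\xi \in \frac{1}{L}\Z^2} e^{ix\cdot\xi - it|\xi|^2}\varphi(\xi/N),
\]
whose Euclidean counterpart is $K_t^\R(y) = \int_{\R^2} e^{iy\cdot\xi - it|\xi|^2}\varphi(\xi/N)\,d\xi$. The plan is to deduce \eqref{str:01} from the classical Euclidean dispersive bound $\|K_t^\R\|_{L^\infty(\R^2)} \lesssim |t|^{-1}$ (valid uniformly in $N\geq 1$) by showing that $K_t^L - K_t^\R$ can be made uniformly much smaller than $|t|^{-1}$ on $\T_L$ once $L \geq L_0(T,N)$.

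By Poisson summation---or equivalently by applying Lemma~\ref{lm:diffcell} cell by cell on the partition of $\R^2$ by cubes of side $1/L$ centered at points of $\frac{1}{L}\Z^2$---one obtains the periodization identity
\[
K_t^L(x) = \sum_{k \in L\Z^2} K_t^\R(x+k).
\]
The $k=0$ contribution is handled by the Euclidean dispersive estimate. For the tail, work with $x$ in the fundamental domain $[-L/2, L/2)^2$ so that $|x+k| \geq L/2$ for $k \neq 0$, and choose $L_0$ large enough (at least $\geq C T N$) that these tails lie in the non-stationary regime $|x+k| \geq 4|t|N$ of the phase $\psi(\xi) := (x+k)\cdot\xi - t|\xi|^2$; then $|\nabla_\xi\psi| = |(x+k) - 2t\xi| \geq |x+k|/2$ uniformly on $\supp\varphi(\cdot/N)$. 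Repeated integration by parts with $\mathcal{L} = \frac{\nabla_\xi\psi}{i|\nabla_\xi\psi|^2}\cdot\nabla_\xi$ then gives, for every $M$,
\[
|K_t^\R(y)| \lesssim_M N^{2-M} |y|^{-M} \qquad \text{whenever } |y| \geq 4|t|N,
\]
and summing over $L\Z^2\setminus\{0\}$ yields a tail of size $\lesssim_M N^{2-M} L^{2-M}$. Choosing $M$ large and then $L_0 = L_0(T,N)$ correspondingly large makes this tail bounded by $T^{-1} \leq |t|^{-1}$, which establishes \eqref{str:01}.

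The Strichartz estimates \eqref{str:02} then follow from the dispersive bound together with the unitarity of $e^{it\Delta}$ on $L^2(\T_L)$ via the standard Keel--Tao $TT^*$ argument applied on the finite interval $[-T,T]$; the hypothesis $q > 2$ corresponds to the usual exclusion of the two-dimensional endpoint $(q,r) = (2,\infty)$.

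The main obstacle I anticipate is the simultaneous calibration of $L_0$ against $T$ and $N$: the non-stationary condition demands $L_0 \gtrsim TN$, while the tail-decay bound must beat $|t|^{-1}$ even in the \emph{worst} case $|t|\sim T$, where $|t|^{-1} \sim T^{-1}$ is smallest and one is forced to extract many orders of decay $L^{2-M}$. Reconciling these two by choosing $M$ large and then $L_0$ suitably large is the technical heart of the argument, and is exactly what is hidden in allowing $L_0$ to depend on both $T$ and $N$.
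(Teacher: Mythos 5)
Your proof is correct, but it takes a genuinely different route to the dispersive bound \eqref{str:01} than the paper does. You invoke Poisson summation to write the torus kernel exactly as the periodization $\sum_{k\in L\Z^2}K_t^\R(x+k)$ of the Euclidean kernel (legitimate, since the symbol is smooth and compactly supported), estimate the $k=0$ term by the standard Euclidean dispersive bound, and kill the $k\neq 0$ terms by continuous non-stationary phase, using $|x+k|\geq L/2\gg TN$ to keep the gradient of the phase large on the support of $\varphi(\cdot/N)$. The paper instead splits physical space into $|x|\leq R:=1+NT$ and $|x|>R$: in the near region it compares the lattice sum to the Euclidean integral cell by cell via the midpoint-rule error bound of Lemma~\ref{lm:diffcell} (an \emph{approximate} identity with quantitative error $O(T^{-1})$, not the exact periodization), and in the far region it performs discrete Abel summation three times directly on the lattice sum, exploiting a lower bound on the first-order phase increment $\Phi(n+\vec e_1)-\Phi(n)$ --- the discrete analogue of your integration by parts. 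Your version is arguably cleaner and avoids the case split; the paper's version stays elementary (no Poisson summation) and reuses Lemma~\ref{lm:diffcell}, which it needs again in Lemmas~\ref{lm:comers} and~\ref{lm:cls}. Two small points: your parenthetical claim that the periodization identity follows ``equivalently'' from Lemma~\ref{lm:diffcell} applied cell by cell is not right --- the midpoint rule yields only an approximation with error, not the exact identity --- though this does not affect your argument since the Poisson-summation route stands on its own; and your constant in the non-stationary condition should account for the symbol being supported in $|\xi|\leq 1.42N$ (so one needs $|x+k|\gtrsim 3|t|N$ rather than literally $4|t|N$ giving $|\nabla\psi|\geq|x+k|/2$), a harmless adjustment. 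The passage from \eqref{str:01} to \eqref{str:02} via $TT^*$ is the same in both arguments.
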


\begin{proof}
We will present the details for the dispersive estimate \eqref{str:01}.  The Strichartz estimates \eqref{str:02} follow from this via the usual $TT^*$ argument.

The convolution kernel associated to the operator  $e^{it\Delta}P_{\le N}^L$ is
\begin{align*}
k(x):=\tfrac 1{L^2}\sum_{n\in \Z^2} e^{i\Phi(n)} \phi \bigl(\tfrac n{NL}\bigr) \qtq{with} \Phi(n):= 2\pi x\cdot \tfrac nL  - 4\pi^2 t |\tfrac nL|^2 .
\end{align*}
We need to show
\begin{align}\label{str:2}
|k(x)| \lesssim |t|^{-1}
\end{align}
uniformly for $0<|t|\leq T$ and $x\in [-L/2,L/2]^2$.  To do this, we divide into two cases: $|x|\leq R$ and $|x|>R$ with $R=1 + NT$.
In the former case, we approximate $k$ by the Euclidean kernel (defined via an integral):
\begin{align*}
k(x)&= \sum_{n\in \Z^2} \biggl(\tfrac 1{L^2}  e^{2\pi i x\cdot\frac nL -4\pi^2i t(\frac nL)^2}\phi\bigl(\tfrac n{NL}\bigr)
	- \int_{Q_n} e^{2\pi i x\cdot\xi -4\pi^2i t|\xi|^2} \phi(\xi/N)\, d\xi\biggr)\\
&\quad + \int_{\R^2} e^{2\pi i x \cdot \xi -4\pi^2i t|\xi|^2}\phi(\xi/N)\,d\xi.
\end{align*}
where $Q_n$ is the cube centered at $n/L$ of side-length $1/L$.

The second summand is $O(|t|^{-1})$ by the usual Euclidean argument.  To estimate the first, we apply Lemma~\ref{lm:diffcell} to obtain the bound
$$
L^{-4} \sum_{|n|\lesssim NL} \bigl( R^2 + N^2 T^2 + T + N^{-2} \bigr) \lesssim L^{-4} R^2 (NL)^2\lsm T^{-1},
$$
provided we choose $L \gg (N^2 T^{\frac 32} + N\sqrt T)$.

It remains to consider those $|x|>R$ and $x\in [-\frac L2, \frac L2]^2$. By symmetry, we can also assume the $x_1$-variable
dominates so that $|x_1|>\frac{\sqrt 2} 2 R$.

Let $\vec e_1=\bigl(\begin{smallmatrix}1\\0\end{smallmatrix}\bigr)$. Using the identity
\begin{align*}
e^{i\Phi(n)}&=\frac{e^{i\Phi(n+\vec e_1)}-e^{i\Phi(n)}}{e^{i[\Phi(n+\vec e_1)-\Phi(n)]}-1} =: \frac{e^{i\Phi(n+\vec e_1)}-e^{i\Phi(n)}}{\Psi(n)}
\end{align*}
and applying a change of variables, we can write
\begin{align}
k(x)&=\tfrac 1{L^2} \sum_{n\in \Z^2} e^{i\Phi(n)}\biggl[\frac{\phi(\frac{n-\vec e_1}{NL})}{\Psi(n-\vec e_1)}-
\frac{\phi(\frac n{NL})}{\Psi(n)}\biggr]\notag\\
&=\tfrac 1{L^2} \sum_{n\in\Z^2} e^{i\Phi(n)}\tfrac 1{\Psi(n-\vec e_1)}\bigl[\phi\bigl(\tfrac{n-\vec e_1}{NL}\bigr)-\phi\bigl(\tfrac n{NL}\bigr)\bigr]\label{str:4}\\
&\quad+\tfrac 1{L^2} \sum_{n\in \Z^2}e^{i\Phi(n)}\phi\bigl(\tfrac n{NL}\bigr)\bigl[\tfrac 1{\Psi(n-\vec e_1)}-\tfrac 1{\Psi(n)}\bigr]\label{str:5}.
\end{align}

From
\begin{align*}
\Phi(n+\vec e_1)-\Phi(n)=2\pi \tfrac{x_1}L -8\pi^2 \tfrac t{L^2} n\cdot \vec e_1-4\pi^2 \tfrac t{L^2},
\end{align*}
and the definition of $R$ we have
\begin{align*}
|\Phi(n+\vec e_1)-\Phi(n)|&\ge \sqrt 2\pi \tfrac RL - O\bigl(\tfrac{NT}{L^2} + \tfrac T{L^2}\bigr) \ge \tfrac RL
\end{align*}
provided $L\gg 1$.
This lower bound also implies
\begin{align}\label{b2.5}
|\Psi(n-\vec e_1)|^{-1} + |\Psi(n)|^{-1}\lesssim \tfrac LR.
\end{align}
From the Fundamental Theorem of Calculus we have
\begin{align*}
\bigl|\phi\bigl(\tfrac{n-\vec e_1}{NL}\bigr)-\phi\bigl(\tfrac n{NL}\bigr)\bigr|&\lsm \tfrac 1{NL}\\
\bigl |\tfrac 1{\Psi(n-\vec e_1)}-\tfrac 1{\Psi(n)}\bigr|&\le \tfrac{|\Phi(n+e_1)-2\Phi(n)+\Phi(n-\vec e_1)|}{|\Psi(n)\Psi(n-\vec e_1)|}\lsm \tfrac T{R^2}.
\end{align*}
Inserting these estimates into \eqref{str:4} and \eqref{str:5}, we obtain
\begin{align*}
|k(x)|&\lsm \tfrac 1{L^2}(NL)^2\bigl[\tfrac L R\tfrac 1{NL}+\tfrac T{R^2}\bigr]\lsm \tfrac NR +\tfrac{N^2}{R^2} T\lsm \tfrac 1T.
\end{align*}
This completes the proof of the proposition.
\end{proof}

As a direct consequence of these local-in-time Strichartz estimates and the arguments used in the Euclidean case (cf. \cite{Clay}), we obtain the following result regarding perturbations of the frequency-localized NLS on the torus:
\begin{align}\label{stabt:1}
\begin{cases}
(i\partial_t+\Delta )u =P_{\le N}^ L \Po^L F(\Po^L u),\\
u(0)=P_{\le N}^L u_0.
\end{cases}
\end{align}
Here $\Po^L$ is a Mikhlin multiplier on the torus $\R^2/L\Z^2$.

\begin{prop}[Perturbation theory for NLS on the torus]\label{P:stab} Given $T>0$ and $1\leq N\in 2^\Z$, let $L_0$ be as in Proposition~\ref{pp:stri}. Fix $L\geq L_0$ and let $\tilde u$ be an approximate solution to \eqref{stabt:1} on $[-T,T]$ in the sense that
\begin{align*}
\begin{cases}
(i\partial_t+\Delta) \tilde u=P_{\le N}^L \Po^L F(\Po^L \tilde u) +e, \\
\tilde u(0)=P_{\le N}^L \tilde u_0
\end{cases}
\end{align*}
for some function $e$ and $\tilde u_0\in L^2(\T)$. Assume
\begin{align*}
\|\tilde u\|_{S([-T,T])}\le A
\end{align*}
and the smallness conditions
\begin{align*}
\|u_0-\tilde u_0\|_2\le \eps \qtq{and} \|e\|_{N([-T,T])}\le \eps.
\end{align*}
Then if $\eps\le \eps_0(A)$, there exists a unique solution $u$ to \eqref{stabt:1} such that
\begin{align*}
\|u-\tilde u\|_{S([-T,T])}\le C(A) \eps.
\end{align*}
\end{prop}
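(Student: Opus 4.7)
The plan is to follow the standard Euclidean perturbation scheme (compare Lemma~\ref{lm:stab}), substituting the local-in-time Strichartz estimates of Proposition~\ref{pp:stri} for their $\R^2$ counterparts. Because the evolution \eqref{stabt:1} contains the sharp projector $P_{\le N}^L$, both $\tilde u$ and the putative solution $u$ remain frequency-localized in $\{|\xi|\le N\}$; since we have arranged $L\ge L_0(T,N)$, Proposition~\ref{pp:stri} delivers the full admissible range of Strichartz estimates on $\T_L$ over $[-T,T]$, with no derivative loss. In particular, by Hölder interpolation between $L^\infty_t L^2_x$ and $L^3_t L^6_x$, we have $\|w\|_{L^4_{t,x}(I\times\T_L)}\lesssim \|w\|_{S(I)}$ uniformly for $I\subset[-T,T]$ and for $w$ frequency-localized below $N$.

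First I would partition $[-T,T]$ into $J=J(A)$ subintervals $I_j=[t_{j-1},t_j]$ on each of which $\|\tilde u\|_{L^4_{t,x}(I_j\times\T_L)}\le\eta$, for a small $\eta=\eta(A)$ to be chosen. Setting $w:=u-\tilde u$, the difference solves
\[
(i\partial_t+\Delta)w=P_{\le N}^L\Po^L\bigl[F(\Po^L u)-F(\Po^L\tilde u)\bigr]-e,\qquad w(0)=u_0-\tilde u_0.
\]
Duhamel's formula together with the $TT^*$ form of \eqref{str:02}, the pointwise bound $|F(a)-F(b)|\lesssim(|a|^2+|b|^2)|a-b|$, Hölder's inequality, and the uniform $L^p$-boundedness of the Mikhlin multiplier $\Po^L$ then yield
\[
\|w\|_{S(I_j)}\le C\|w(t_{j-1})\|_2+C\|e\|_{N(I_j)}+C\bigl(\eta^2+\|w\|_{S(I_j)}^2\bigr)\|w\|_{S(I_j)}.
\]

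For $\eta=\eta(A)$ small enough that $C\eta^2<\tfrac 14$ and $\eps_0(A)$ small enough that the bootstrap hypothesis $\|w\|_{S(I_j)}\le 4C(\|w(t_{j-1})\|_2+\|e\|_{N(I_j)})$ is preserved, a standard continuity argument gives $\|w\|_{S(I_j)}\le 2C(\|w(t_{j-1})\|_2+\|e\|_{N(I_j)})$ on each $I_j$. Iterating across the $J=J(A)$ subintervals amplifies the cumulative error by at most a factor $(2C)^J=C(A)$, producing the desired bound $\|u-\tilde u\|_{S([-T,T])}\le C(A)\eps$, and the argument on the interval $[0,-T]$ is symmetric.

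I do not anticipate any real obstacle beyond what appears in the Euclidean case: Proposition~\ref{pp:stri} already does the heavy lifting of supplying unweighted Strichartz estimates at the scale of the problem, and everything else is identical (modulo the torus domain) to the proof of Lemma~\ref{lm:stab}. The only point requiring any care is that the number of subintervals $J$, and hence the final constants $\eps_0(A)$ and $C(A)$, depend only on $A$ and not on $L$, $N$, or $T$ beyond what is already absorbed into the Strichartz constants of Proposition~\ref{pp:stri}; this is automatic because the subinterval splitting is driven by the Strichartz norm $\|\tilde u\|_{L^4_{t,x}}\le A$ alone.
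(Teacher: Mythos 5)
Your proposal is correct and is exactly the argument the paper has in mind: the paper offers no written proof of Proposition~\ref{P:stab}, stating only that it is ``a direct consequence'' of the local-in-time Strichartz estimates of Proposition~\ref{pp:stri} together with the standard Euclidean perturbation scheme (as in Lemma~\ref{lm:stab} and the reference \cite{Clay}), which is precisely the subdivision-plus-bootstrap argument you carry out. Your closing remark that the number of subintervals, and hence $\eps_0(A)$ and $C(A)$, depends only on $A$ is the one point worth making explicit, and you make it correctly.
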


\section{Torus problem approximation}\label{S:8}

Fix $M>0$, $T>0$ and $D\geq D_0(M)$; recall that $D_0$ was introduced in Theorem~\ref{thm:scapds}.  Let $M_n\to \infty$ and $\eps_n\to 0$.  Let $L_n\geq L_{*}(M,D,M_n, \eps_n,T)$, where $L_*$ is a large constant determined through the arguments in this section.  Lastly, we define $\btn:=\R^2/L_n \Z^2$ and let $\pml$ denote the Fourier multiplier on $\btn$ with symbol $m_D(\cdot/M_n)$.

We consider the following sequence of finite-dimensional Hamiltonian systems
\begin{align}\label{per1}
\begin{cases}
(i\partial_t+\Delta)u_n =\pml F(\pml u_n), \quad (t,x)\in \R\times\btn\\
u_n(0)=u_{0,n}
\end{cases}
\end{align}
with initial data
\begin{align}\label{8.0}
u_{0, n}\in \mathcal H_n:=\{f\in L^2(\btn):\,P_{>2DM_n}^{L_n} f=0 \} \qtq{with} \|u_{0,n}\|_2\le M.
\end{align}
We will show that for $n$ sufficiently large, solutions to \eqref{per1} can be well approximated by solutions to the corresponding problem in $\R^2$ on the fixed time interval $[-T,T]$.  Note that as a finite-dimensional system with coercive Hamiltonian, \eqref{per1} automatically has global solutions.

\subsection{Choice of cutoffs}\label{SS:cutoffs}
We define one-dimensional cutoffs. The two-dimensional cutoff functions will be tensor products of these one-dimensional cutoff functions.

Let $\eta_n:=\eps_n^2$.  Subdivide the interval $[\frac{L_n}4,\frac{L_n}2]$ into at least $16 M^2/\eta_n$ many subintervals of length $20\frac 1{\eta_n} DM_n T$; this can be achieved since we may assume that
\begin{align}\label{cf:1}
L_n\gg \tfrac{M^2}{\eta_n} \cdot \tfrac 1{\eta_n} DM_nT.
\end{align}
By the pigeonhole principle, there exists a subinterval, which we denote by
\begin{align*}
I_n^1:=[c_n^1-\tfrac {10}{\eta_n} DM_n T, c_n^1+\tfrac {10}{\eta_n}DM_nT],
\end{align*}
that satisfies
\begin{align*}
\|u_{0,n}(x+L_n\Z^2)\chi_{I_n^1}(x_1) \chi_{[-L_n/2, L_n/2]}(x_2)\|_{L^2(\R^2)}\le \tfrac 14 \eps_n.
\end{align*}
For $0\leq j\leq 4$, let $\chi_{n,1}^j:\R\to [0,1]$ be smooth cutoff functions adapted to $I_n^1$ in the following sense
\begin{align*}
\chi_{n,1}^j(x)=\begin{cases}1, & x\in [c_n^1-L_n+\tfrac {10-2j}{\eta_n}DM_nT, c_n^1-\tfrac {10-2j}{\eta_n}DM_nT],\\
0, & x\in (-\infty, c_n^1-L_n+\tfrac {10-2j-1}{\eta_n}DM_nT)\cup(c_n^1-\tfrac {10-2j-1}{\eta_n}DM_nT,\infty).
\end{cases}
\end{align*}

An analogous argument allows us to find
\begin{align*}
I_n^2:=[c_n^2-\tfrac {10}{\eta_n} DM_n T, c_n^2+\tfrac {10}{\eta_n}DM_nT]\subset [\tfrac{L_n}4,\tfrac{L_n}2],
\end{align*}
with the property that
\begin{align*}
\|u_{0,n}(x+L_n\Z^2) \chi_{[-L_n/2, L_n/2]}(x_1) \chi_{I_n^2}(x_2)\|_{L^2(\R^2)}\le \tfrac 14 \eps_n.
\end{align*}
For $0\leq j\leq 4$, we define cutoff functions $\chi_{n,2}^j:\R\to [0,1]$ adapted to $I_n^2$, as above.

We then define $\chi_n^j:\R^2\to [0,1]$ via
\begin{align}
\chi_n^j(x):=\chi_{n,1}^j(x_1)\chi_{n,2}^j(x_2).
\end{align}
We list below some of the properties of $\chi_n^j$ that we will rely on in this section:
\begin{align}\label{cf:1.0}
\begin{cases}
&\|\nabla \chi_n^j \|_\infty \le \frac{\eta_n}{DM_nT} \\[0.5ex]
&\chi_n^i\chi_n^j =\chi_n^i, \mbox{ for } j>i \\[0.5ex]
&\dist (\supp\chi_n^i, \supp(1-\chi_n^j))\ge \frac 1{\eta_n}DM_nT, \mbox{ for }j>i\\[0.5ex]
&\|(1-\chi_n^j)u_{0,n}\|_{L^2(\R^2)}\le \eps_n, \mbox{ for } 0\le j\le 4.
\end{cases}
\end{align}

\subsection{Control on the solution to $\pnns$}\label{SS:8.2}

Let $\tun:\R\times\R^2\to \C$ be solutions to
\begin{align}\label{lm:1}
\begin{cases}
(i\partial_t+\Delta)\tun=\pmn F(\pmn\tun),\\
\tun(0,x)=\cha(x) u_{0,n}(x+L_n\Z^2),
\end{cases}
\end{align}
where $u_{0,n}$ are as in \eqref{8.0}.  The existence of such solutions is guaranteed by Theorem~\ref{thm:scapds}; indeed, this equation is simply a rescaling of $(\pds)$.
Rescaling \eqref{E:thm:scapds} shows that
\begin{align}\label{4.1tilde}
\| \tun \|_{S(\R)} \lsm_M  1.
\end{align}
More generally, we have the following:

\begin{lem}[Control of $\tun$]\label{lm:bdtun}
For $n$ sufficiently large, the global solution $\tun$ to \eqref{lm:1} satisfies
\begin{align*}
\||\nabla|^s \tun \|_{S(\R)}\lsm_M (DM_n )^s \qtq{for all} \ 0\le s\le 1.
\end{align*}
\end{lem}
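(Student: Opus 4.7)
The strategy is to reduce the claim to Lemma~\ref{lm:perpo} (persistence of positive regularity) via the \emph{a priori} bound
\begin{align*}
\|\tun(0)\|_{\dot H^s(\R^2)} \lesssim_M (DM_n)^s \qtq{for all} 0\le s\le 1.
\end{align*}
Once this is in hand, I would combine it with the global $L^4_{t,x}$ bound $\|\tun\|_{L^4_{t,x}(\R\times\R^2)}\lesssim_M 1$, obtained from \eqref{4.1tilde} by interpolating $C^0_tL^2_x$ against $L^3_tL^6_x$, to invoke Lemma~\ref{lm:perpo} directly and conclude. Observe that $\tun$ solves an equation of the form \eqref{eq:1} with $\Po=\pmn$, a Mikhlin multiplier uniformly in $n$, so Lemma~\ref{lm:perpo} applies.

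To establish the $\dot H^s$ bound, I would use interpolation and handle only the endpoints $s=0$ and $s=1$. The key geometric observation is that $\supp\cha$ is contained in a translate of a fundamental domain of $L_n\Z^2$; indeed, by construction in subsection~\ref{SS:cutoffs}, $\supp\chi_{n,k}^0$ is contained in an interval of length $L_n - \tfrac{18}{\eta_n}DM_nT < L_n$. Writing $U_n(x):=u_{0,n}(x+L_n\Z^2)$, this observation yields
\begin{align*}
\|\cha U_n\|_{L^2(\R^2)}\le \|U_n\|_{L^2(\supp\cha)}\le \|u_{0,n}\|_{L^2(\btn)}\le M,
\end{align*}
which is the $s=0$ endpoint.

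For $s=1$, I would apply the Leibniz rule, $\nabla(\cha U_n)=(\nabla\cha)U_n+\cha\nabla U_n$, and estimate the two terms separately. The first term is in fact negligible: the cutoff property \eqref{cf:1.0} gives $\|\nabla\cha\|_\infty\le \tfrac{\eta_n}{DM_nT}$, so $\|(\nabla\cha)U_n\|_{L^2(\R^2)}\lesssim \tfrac{\eta_n}{DM_nT}\cdot M$, which is far smaller than $DM_n\cdot M$. For the second term, the hypothesis $u_{0,n}\in\mathcal H_n$, i.e. $P^{L_n}_{>2DM_n}u_{0,n}=0$, localizes the torus Fourier support of $u_{0,n}$ to $|\xi|\lesssim DM_n$. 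Hence Bernstein on $\btn$ yields $\|\nabla u_{0,n}\|_{L^2(\btn)}\lesssim DM_n\|u_{0,n}\|_{L^2(\btn)}\le DM_n\cdot M$, and the fundamental-domain argument above then gives $\|\cha\nabla U_n\|_{L^2(\R^2)}\lesssim DM_n M$. Combining the two endpoints and interpolating via $\|f\|_{\dot H^s}\le \|f\|_{\dot H^1}^s\|f\|_{L^2}^{1-s}$ produces the desired $(DM_n)^sM$ bound.

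The proof is essentially routine once one recognizes that there is no genuine obstacle: the only subtle point is verifying the fundamental-domain containment, which is ensured by the comfortable $\tfrac{1}{\eta_n}DM_nT$ margins built into the construction of the cutoffs in subsection~\ref{SS:cutoffs}, and by assumption \eqref{cf:1} on $L_n$. No perturbation theory, concentration-compactness, or interaction with the nonlinearity is required at this step — the only nonlinear input is the global bound \eqref{4.1tilde} which feeds into Lemma~\ref{lm:perpo}.
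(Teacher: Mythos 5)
Your proof is correct. The bound on the initial data is exactly the paper's first step: Leibniz rule, $\|\nabla\cha\|_\infty\lesssim \eta_n/(DM_nT)$, the fundamental-domain containment of $\supp\cha$, and Bernstein on $\btn$ using $u_{0,n}\in\mathcal H_n$, giving $\|\nabla\tun(0)\|_{L^2}\lesssim_M DM_n$ and hence $\|\tun(0)\|_{\dot H^s}\lesssim_M (DM_n)^s$ by interpolation. Where you diverge is the propagation step: you feed this into Lemma~\ref{lm:perpo} (after extracting the $L^4_{t,x}$ bound from \eqref{4.1tilde} by interpolating $C^0_tL^2_x$ against $L^3_tL^6_x$), which is legitimate since \eqref{lm:1} is of the form \eqref{eq:1} with $\Po=\pmn$ a Mikhlin multiplier uniformly in $n$. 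The paper instead runs a single global Strichartz estimate, exploiting that the Duhamel term $\pmn F(\pmn\tun)$ has Fourier support in $\{|\xi|\lesssim DM_n\}$, so $|\nabla|^s$ acting on it costs only a factor $(DM_n)^s$ by Bernstein and one bounds $\|F(\pmn\tun)\|_{L^1_tL^2_x}\lesssim\|\tun\|_{L^3_tL^6_x}^3$ outright --- no time subdivision or fractional chain rule is needed, since the derivative never has to land on the solution itself. Your route is a clean reduction to an already-proven black box; the paper's is a one-shot argument made possible by the frequency truncation. Both yield the stated bound.
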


\begin{proof}
Observe first that by Bernstein's inequality and $u_{0,n}\in\mathcal H_n$,
\begin{align*}
\| \nabla \tun(0) \|_{L^2(\R^2)} \lsm \| \nabla\cha \|_{L^\infty(\R^2)} \| u_{0,n} \|_{L^2(\btn)} + \| \nabla u_{0,n} \|_{L^2(\btn)}
	\lsm (1 + DM_n) M,
\end{align*}
for $n$ large enough.  Using this, Strichartz, Bernstein, and \eqref{4.1tilde}, we deduce
\begin{align*}
\||\nabla|^s \tun \|_{S(\R)} &\lsm \| |\nabla|^s \tun(0) \|_{L^2(\R^2)}  + \bigl\| |\nabla|^s \pmn F(\pmn\tun)\bigr\|_{L^1_t L^2_x} \\
&\lsm \| \tun(0) \|_{L^2(\R^2)}^{1-s} \| \nabla \tun(0) \|_{L^2(\R^2)}^{s} + (DM_n)^s \| \tun \|_{L^3_t L^6_x}^3 \\
&\lsm_M (DM_n)^s,
\end{align*}
thus proving the lemma.
\end{proof}

\begin{lem}[Mismatch estimate in $\R^2$]\label{lm:mr}
Fix $1<p<\infty$ and let $E, F$ be two sets in $\R^2$ such that $\dist(E,F)\ge A\ge 1$. Then
\begin{align*}
\|\chi_E \pmn \chi_F\|_{L^p(\R^2)\to L^p(\R^2)}\lsm \tfrac 1{M_n A},
\end{align*}
uniformly for $D\geq 1$.
\end{lem}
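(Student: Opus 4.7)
The plan is to derive pointwise decay of the convolution kernel of $\pmn$ and then feed it into Schur's test. Since the operator $\pmn$ is convolution with
$$K_n(z) = \int_{\R^2} m_D(\xi/M_n)\, e^{2\pi i z\cdot\xi}\,d\xi,$$
the bilocalized operator $\chi_E \pmn \chi_F$ has integral kernel $\chi_E(x) K_n(x-y) \chi_F(y)$, whose support forces $|x-y|\ge A$. Schur's lemma then gives
$$\|\chi_E\pmn\chi_F\|_{L^p\to L^p} \le \sup_x\int |K_n(x-y)|\chi_F(y)\,dy + \text{dual} \le \int_{|z|\ge A}|K_n(z)|\,dz,$$
so the whole task reduces to showing
$$\int_{|z|\ge A}|K_n(z)|\,dz \lsm \tfrac1{M_n A},$$
with constants independent of $D$.

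First I would exploit the explicit structure of $m_D$. From the representation
$$m_D(\xi) = \tfrac{1}{\log_2(2D)}\sum_{\substack{N\in 2^{\mathbb{Z}}\\ 1\le N\le D}}\varphi(\xi/N),$$
the Fourier scaling rule yields
$$K_n(z) = \tfrac{1}{\log_2(2D)}\sum_{\substack{N\in 2^{\mathbb{Z}}\\ 1\le N\le D}}(NM_n)^2\,\check\varphi(NM_n z).$$
Since $\varphi$ is smooth and compactly supported, $\check\varphi$ is Schwartz, in particular $|\check\varphi(y)|\lsm_K \langle y\rangle^{-K}$ for any $K$. The hypothesis $A\ge 1$ and the standing normalization $M_n\ge 1$ guarantee $NM_n|z|\ge 1$ for all $z$ with $|z|\ge A$ and all $N\ge 1$, so choosing $K=3$ gives the pointwise bound
$$|K_n(z)|\lsm \tfrac{1}{\log_2(2D)}\sum_{N}\tfrac{1}{NM_n|z|^3}\lsm \tfrac{1}{\log_2(2D)\,M_n\,|z|^3},$$
where the sum $\sum_{N\in 2^{\mathbb{Z}},\,N\ge 1} N^{-1}$ converges to an absolute constant.

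Integrating this pointwise bound over $\{|z|\ge A\}$ in $\R^2$ gives
$$\int_{|z|\ge A}|K_n(z)|\,dz \lsm \tfrac{1}{\log_2(2D)\,M_n}\int_{|z|\ge A}\tfrac{dz}{|z|^3} \lsm \tfrac{1}{M_n A},$$
uniformly in $D\ge 1$. Feeding this into Schur's test (which for kernels with $\sup_x\int|K|dy + \sup_y\int|K|dx\le C$ yields the bound $C$ on $L^p\to L^p$ for every $1\le p\le\infty$) produces the desired estimate.

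The only real obstacle is keeping the bound independent of $D$: a naive Mikhlin-type estimate would give only $L^p$-boundedness of $\pmn$ with constants uniform in $D$, but no quantitative kernel decay. The logarithmic normalization $\tfrac{1}{\log_2(2D)}$ built into $m_D$ exactly compensates the $O(\log D)$ terms in its dyadic expansion, which is what makes the summation step above yield a $D$-free bound. Everything else is a routine Schwartz tail estimate.
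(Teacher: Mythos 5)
Your proof is correct and follows the same route as the paper's: identify the kernel as $\chi_E(x)\,M_n^2\check m_D(M_n(x-y))\,\chi_F(y)$, establish the $D$-uniform decay $M_n^2|\check m_D(M_nz)|\lesssim M_n^{-1}|z|^{-3}$, and conclude by Schur's test --- the paper simply asserts this decay from the ``rapid decay of $\check m_D$,'' whereas you justify it explicitly via the dyadic expansion of $m_D$. One small caveat on your closing remark: the $D$-uniformity here really comes from the summability of $\sum_{N\in2^{\Z},\,N\ge1}N^{-1}$ (a consequence of taking three powers of decay on $\check\varphi$, one more than the dimension), not from the $\tfrac1{\log_2(2D)}$ normalization, which your argument correctly ends up just discarding since it is $\le 1$.
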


\begin{proof} Note that the kernel of $\chi_E \pmn \chi_F$ is given by
\begin{align*}
(\chi_E\pmn \chi_F)(x,y)=\chi_E(x)\chi_F(y) M_n^2\check{m}_D(M_n(x-y)).
\end{align*}
Using the rapid decay of $\check m_D$, we obtain
\begin{align*}
|(\chi_E\pmn \chi_F)(x,y)|\lsm  M_n^{-1} |x-y|^{-3} \chi_E (x)\chi_F(y)
\end{align*}
(independent of $D\geq1$), and so
\begin{align*}
\sup_y\int_{\R^2}|(\chi_E\pmn \chi_F)(x,y)|\,dx&+\sup_x\int_{\R^2}|(\chi_E\pmn \chi_F)(x,y)|\,dy\\
&\lsm \int_{|x-y|\ge A} M_n^{-1} |x-y|^{-3}\, dx\lsm M_n^{-1}A^{-1}.
\end{align*}
An application of Schur's test yields the claim.
\end{proof}

\begin{lem}[Commutator estimates on $\R^2$]\label{lm:cmr}  For $0\leq j\leq 4$, we have the following commutator estimates, which hold uniformly for $D\geq 1:$
\begin{align}
\|[\chi_n^j, \pmn]\|_{L^2\to L^2}\lsm \tfrac{\eta_n}{M_n^2DT}, \label{cmr:1}\\
\|[(1-\chi_n^j)^2, \pmn]\|_{L^2\to L^2}\lsm \tfrac{\eta_n}{M_n^2DT}. \label{cmr:2}
\end{align}
\end{lem}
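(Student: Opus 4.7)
The plan is to derive both estimates from a single abstract bound
$$\|[a,\pmn]\|_{L^2\to L^2} \lesssim M_n^{-1}\|\nabla a\|_\infty,$$
applied to $a = \chi_n^j$ and then to $a = (1-\chi_n^j)^2$. For the second choice I will use the chain rule $\nabla(1-\chi_n^j)^2 = -2(1-\chi_n^j)\nabla\chi_n^j$ together with $0\le 1-\chi_n^j\le 1$ to carry the Lipschitz bound $\|\nabla a\|_\infty\lesssim \eta_n/(DM_nT)$ over from \eqref{cf:1.0}; combined with the abstract estimate this produces exactly $\eta_n/(DM_n^2 T)$ in both cases.

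For the abstract bound I will work at the level of integral kernels. Since $\pmn$ is convolution against $K_n(z) := M_n^2 \check m_D(M_n z)$, the commutator $[a,\pmn]$ has kernel $(a(x)-a(y))K_n(x-y)$, whose absolute value is at most $\|\nabla a\|_\infty\,|x-y|\,|K_n(x-y)|$ by the mean value theorem. Applying Schur's test then reduces matters to showing
$$\int_{\R^2}|z|\,|K_n(z)|\,dz \;=\; M_n^{-1}\int_{\R^2}|w|\,|\check m_D(w)|\,dw \;\lesssim\; M_n^{-1},$$
with an implicit constant independent of $D\geq 1$ (and of $M_n$).

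The main analytic content is therefore the $D$-uniform estimate $\int|w|\,|\check m_D(w)|\,dw \lesssim 1$. Here I will exploit the explicit dyadic structure of $m_D$ in \eqref{m_D definition}, writing $m_D = \varphi + \sum_{N=2,4,\dots,D} c_N\,\psi_N$ with $\psi_N(\xi) := \varphi(\xi/N)-\varphi(2\xi/N)$ and coefficients $c_N\in[0,1]$. Each $\psi_N$ is a single fixed Schwartz bump dilated to scale $N$, so standard Schwartz-tail decay yields $|\check\psi_N(w)|\lesssim_K N^2\langle Nw\rangle^{-K}$ for every $K$. Changing variables $u = Nw$ gives $\int|w|\,|\check\psi_N(w)|\,dw \lesssim N^{-1}$, and summing the convergent geometric series $\sum_{N\in 2^{\N_0}} N^{-1}\lesssim 1$ closes the estimate uniformly in $D$.

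The only subtle point is the dependence on $D$: naive estimates via Fourier-side derivatives of $m_D$ lose a logarithm (because $\|\partial^2 m_D\|_{L^1}\sim \log D$), so a blunt application of Bernstein-type bounds would not suffice. The dyadic decomposition above is what avoids that loss and is the step I expect to be the main obstacle if one attempts a naive approach; everything else is routine.
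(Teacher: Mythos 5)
Your proposal is correct and follows essentially the same route as the paper: both write the commutator kernel as $(a(x)-a(y))M_n^2\check m_D(M_n(x-y))$, apply the Lipschitz bound $\|\nabla\chi_n^j\|_\infty\lesssim \eta_n/(DM_nT)$ from \eqref{cf:1.0}, and conclude via Schur's test and the $D$-uniform bound $\int|w|\,|\check m_D(w)|\,dw\lesssim 1$. The only difference is that you verify this last uniformity explicitly through the dyadic decomposition of $m_D$, a detail the paper leaves implicit (it simply invokes the uniform rapid decay of $\check m_D$ noted alongside \eqref{m_D definition}).
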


\begin{proof} The proof of \eqref{cmr:2} is similar to that of \eqref{cmr:1}, so we only present the details for \eqref{cmr:1}. Let
\begin{align*}
k(x,y)&:=[\chi_n^j,\pmn](x,y)=(\chi_n^j(x)-\chi_n^j(y))M_n^2 \check m_D (M_n(x-y)).
\end{align*}
We compute
\begin{align*}
\sup_y\|k(x,y)\|_{L^1(dx)}&\le \sup_y\|\nabla \chi_n^j\|_\infty M_n^2\int_{\R^2}|x-y||\check m_D(M_n(x-y))| \,dx\\
&\le \tfrac{\eta_n}{DM_n T} M_n^2 M_n^{-3}\int_{\R^2} |x| |\check m_D(x)| \,dx\lsm \tfrac{\eta_n}{DT M_n^2}.
\end{align*}
By symmetry, we also have
\begin{align*}
\sup_x\|k(x,y)\|_{L^1(dy)}\lsm \frac{\eta_n}{DTM_n^2}.
\end{align*}
An application of Schur's test yields \eqref{cmr:1}.
\end{proof}

\begin{lem}[Motion of mass for $\tun$]\label{lm:sm}
Let $\tun$ be the solution to \eqref{lm:1} and let $\chi_n^j$ be as above. Then for $1\leq j\leq 4$ we have
\begin{align*}
\|(1-\chi_n^j)\tun\|_{L_t^\infty L_x^2([-T,T]\times\R^2)}\lsm_{D,M} \eps_n.
\end{align*}
\end{lem}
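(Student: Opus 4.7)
The plan is to run a standard mass-flow argument: differentiate $\|(1-\chi_n^j)\tun\|_2^2$ in time, control the boundary-type kinetic term using the regularity bound from Lemma~\ref{lm:bdtun}, and handle the nonlinear term by pushing $\pmn$ (which is self-adjoint) off $F(\pmn\tun)$ onto the cutoff, so that the only surviving piece is a commutator, which is small by Lemma~\ref{lm:cmr}.

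First, note that the initial data vanishes: for $j\ge 1$, the cutoff property $\chi_n^j \chi_n^0 = \chi_n^0$ gives $(1-\chi_n^j)\tun(0,\cdot) = (1-\chi_n^j)\cha\, u_{0,n} \equiv 0$. Using the equation satisfied by $\tun$, we compute
\begin{align*}
\tfrac{d}{dt}\|(1-\chi_n^j)\tun\|_2^2
= -2\Im \int (1-\chi_n^j)^2\bar\tun\,\Delta\tun\,dx + 2\Im \int (1-\chi_n^j)^2\bar\tun\, \pmn F(\pmn\tun)\,dx.
\end{align*}
Integrating by parts in the first term (the quadratic gradient term is real, hence drops out), its modulus is bounded by
\[
4\|\nabla\chi_n^j\|_{L^\infty}\|\tun\|_2\|\nabla\tun\|_2 \lsm_M \tfrac{\eta_n}{DM_n T}\cdot M\cdot DM_n = \tfrac{M\eta_n}{T},
\]
where we used $\|\nabla\chi_n^j\|_\infty \lsm \eta_n/(DM_nT)$ from \eqref{cf:1.0} and $\|\nabla\tun\|_{L^\infty_t L^2_x}\lsm_M DM_n$ from Lemma~\ref{lm:bdtun}.

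For the nonlinear term, move $\pmn$ onto the cutoff and expand via a commutator:
\begin{align*}
2\Im \int (1-\chi_n^j)^2\bar\tun\, \pmn F(\pmn\tun)
= 2\Im \int (1-\chi_n^j)^2|\pmn\tun|^4\,dx + 2\Im \int [\pmn,(1-\chi_n^j)^2]\bar\tun\cdot F(\pmn\tun)\,dx.
\end{align*}
Since $\pmn$ has real symbol, $\pmn\bar\tun = \overline{\pmn\tun}$, so the first integrand is pointwise real and its imaginary part vanishes. The commutator term is controlled by Cauchy--Schwarz and \eqref{cmr:2}:
\[
\bigl|2\Im \int [\pmn,(1-\chi_n^j)^2]\bar\tun\, F(\pmn\tun)\bigr|
\lsm \tfrac{\eta_n}{DM_n^2 T}\,\|\tun\|_2\,\|\pmn\tun\|_{L^6_x}^3.
\]
Since $\|\pmn\tun\|_{L^3_t L^6_x(\R\times\R^2)}\lsm_M 1$ by Lemma~\ref{lm:bdtun} (or rather the $s=0$ case, i.e., Theorem~\ref{thm:scapds}), integrating in time over $[-T,T]$ yields a contribution of size $\lsm_M \eta_n/(DM_n^2 T)$.

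Combining the two pieces and integrating from $0$ to $t\in[-T,T]$, one gets
\[
\|(1-\chi_n^j)\tun(t)\|_2^2 \lsm_{D,M} \eta_n + \tfrac{\eta_n}{DM_n^2 T} \lsm_{D,M} \eps_n^2,
\]
which yields the claim upon taking a supremum in $t$ and a square root. I expect the only subtle point to be the algebraic cancellation that kills the leading nonlinear contribution, leaving only the commutator; everything else is a careful but routine application of the kinetic energy bound from Lemma~\ref{lm:bdtun} and the commutator estimate~\eqref{cmr:2}.
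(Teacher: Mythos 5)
Your proof is correct and follows essentially the same route as the paper: differentiate the localized mass, integrate by parts on the Laplacian term using Lemma~\ref{lm:bdtun} and the gradient bound on the cutoff, and reduce the nonlinear term to a commutator via Lemma~\ref{lm:cmr} after observing that $(1-\chi_n^j)^2|\pmn\tun|^4$ is real. The only cosmetic differences are that you bound the commutator contribution by integrating the $L^3_tL^6_x$ Strichartz norm in time (the paper instead uses a pointwise-in-time Sobolev bound $\|\pmn\tun\|_6^3\lsm(DM_n)^2$) and that you note the localized mass vanishes exactly at $t=0$ rather than merely being $O(\eps_n^2)$; both variants are fine.
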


\begin{proof} We need only present the proof for $j=1$.  To this end, we define
\begin{align*}
\mathcal M(t):=\int_{\R^2}|1-\chb(x)|^2 |\tun(t,x)|^2 \,dx.
\end{align*}
By construction, $\mathcal M(0)\le \eps_n^2$.  Moreover,
\begin{align}
\frac d{dt} \mathcal M(t)&=2 \Re\int_{\R^2}|1-\chb|^2  \overline{\tun} \partial _t \tun \,dx\notag\\
&=-2\Im \int_{\R^2}|1-\chb|^2 \overline{ \tun} \Delta \tun \,dx\label{sm:1}\\
&\quad+2 \Im \int_{\R^2}|1-\chb|^2 \overline{\tun} \pmn F(\pmn \tun) \,dx. \label{sm:2}
\end{align}
Integrating by parts and using Lemma~\ref{lm:bdtun}, we estimate
\begin{align*}
|\eqref{sm:1}|&=\Bigl|4\Im\int_{\R^2}(1-\chb)\overline{\tun} \nabla\chb\cdot \nabla\tun \,dx\Bigr|\\
&\lsm \|(1-\chb)\tun\|_2\cdot \tfrac{\eta_n}{DM_nT} \cdot C(M)DM_n\\
&\le C(M) \tfrac {\eta_n}T \mathcal M^{\frac 12}(t).
\end{align*}

Next we estimate \eqref{sm:2}. We write
\begin{align*}
\eqref{sm:2}&=2 \Im \int_{\R^2}F(\pmn \tun)\pmn \bigl(|1-\chb|^2 \overline{\tun} \bigr)\, dx\\
&=2\Im \int_{\R^2}F(\pmn \tun)[\pmn, (1-\chb)^2]\overline{\tun}\,dx\\
&\qquad\qquad+2\Im \int_{\R^2}F(\pmn \tun)(1-\chb)^2 \pmn \overline{\tun}\,dx\\
&=2\Im\int_{\R^2}F(\pmn \tun)[\pmn,(1-\chb)^2]\overline{\tun} \,dx.
\end{align*}
Using H\"older and Lemma \ref{lm:cmr}, we estimate
\begin{align*}
|\eqref{sm:2}| &\lsm \|[\pmn,(1-\chb)^2]\tun\|_2 \|F(\pmn\tun)\|_2\\
&\le \|[\pmn, (1-\chb)^2]\|_{L^2\to L^2}\|\tun\|_2\|\pmn \tun\|_6^3\\
&\lsm \tfrac{\eta_n}{M_n^2DT}\|\tun\|_2\|\pmn u_n\|_{\dot H^{\frac 23}}^3\\
&\le C(M)\tfrac{\eta_n}{M_n^2DT} (M_nD)^2\\
&\le C(M)\tfrac{D\eta_n} T.
\end{align*}

Putting things together we get
\begin{align*}
\tfrac d{dt}\mathcal M(t)\le C(M)\tfrac{\eta_n}T \mathcal M^{\frac 12}(t) +C(M) \tfrac{\eta_n D}T.
\end{align*}
Recalling that $\eta_n=\eps_n^2$ and $\mathcal M(0)\leq \eps_n^2$, this differential inequality then implies
\begin{align*}
\mathcal M(t)\lsm_{D,M} \eps_n^2 \qtq{for all} t\in [-T,T].
\end{align*}
This completes the proof of the lemma.
\end{proof}

\subsection{Estimates used for approximating solutions to \eqref{per1} by those to \eqref{lm:1}}\label{SS:8.3}

Throughout this subsection, we will use $K^{L_n}$ to denote the kernel of the Fourier multiplier $\pml$:
\begin{align*}
K^{L_n}(x,y) = \frac 1{L_n^2}\sum_{j\in \Z^2} e^{2\pi i(x-y)\cdot j/L_n}m_D\bigl(\tfrac j{M_n L_n}\bigr).
\end{align*}
When $D=1$, this gives the kernel of the standard Littlewood--Paley projection $P_{\leq M_n}$.  The estimates proved in this section will be uniform in $D\geq 1$, allowing them to be applied to these operators as well.

Our preferred notion of distance on $\btn=\R^2/L_n\Z^2$ is
\begin{align*}
\dist(x,y)=\dist(x_1-y_1,L_n\Z)\vee \dist(x_2-y_2,L_n\Z)
\end{align*}
The following estimate will be used repeatedly:

\begin{lem}[Basic kernel estimate]\label{lm:kernel}
Given $A\geq 1$,
\begin{align*}
\int_{\dist(x,y)\geq A}|K^{L_n}(x,y)| \,dx \lsm \tfrac 1{D M_n A}.
\end{align*}
\end{lem}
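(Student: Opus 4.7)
The strategy is to realize $K^{L_n}$ as the periodization of the Euclidean convolution kernel
\begin{equation*}
K(z) := M_n^2\,\check m_D(M_n z),
\end{equation*}
which is the kernel of the Fourier multiplier with symbol $m_D(\xi/M_n)$ acting on $\R^2$, and then reduce the torus tail estimate to a tail estimate for $\check m_D$ that exploits the smoothness of $m_D$.

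\textbf{Step 1 (Poisson summation).} I would first identify
\begin{equation*}
K^{L_n}(x,y) \;=\; \sum_{k\in\Z^2} K\bigl(x-y+L_n k\bigr).
\end{equation*}
This is simply the Poisson summation formula: since $\hat K(\xi)=m_D(\xi/M_n)$, summing $K$ over the lattice $L_n\Z^2$ is dual to sampling its Fourier transform on the dual lattice $L_n^{-1}\Z^2$, which is exactly the trigonometric sum defining $K^{L_n}$.

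\textbf{Step 2 (reduction to $\R^2$).} Fix $y$ and take the fundamental domain $F_n=[-L_n/2,L_n/2)^2$. The triangle inequality plus a change of variable $z=x+L_n k$ in each summand unfolds the torus integral:
\begin{equation*}
\int_{x\in F_n,\,\dist_{\btn}(x,y)\geq A}\!\bigl|K^{L_n}(x,y)\bigr|\,dx
\;\leq\; \int_{z\in\R^2,\ \dist(z-y,\,L_n\Z^2)\geq A}\!|K(z-y)|\,dz
\;\leq\; \int_{|z|\geq A}|K(z)|\,dz,
\end{equation*}
where in the last step we drop all but the single lattice constraint $|z|\geq A$. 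A change of variables $w=M_n z$ converts the right-hand side to $\int_{|w|\geq M_n A}|\check m_D(w)|\,dw$.

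\textbf{Step 3 (decay of $\check m_D$).} The remaining estimate is purely about the Euclidean Fourier transform of $m_D$. Integrating by parts three times gives
\begin{equation*}
|w|^3|\check m_D(w)|\;\lesssim\;\|\nabla^3 m_D\|_{L^1(\R^2)}.
\end{equation*}
Differentiating the explicit sum defining $m_D$ term by term, each summand $\varphi(\cdot/N)$ contributes $N^{-3}\|\nabla^3\varphi(\cdot/N)\|_{L^1}=N^{-1}\|\nabla^3\varphi\|_{L^1}$. After accounting for the prefactor $1/\log_2(2D)$ and summing the geometric series $\sum_{N\in 2^\Z\cap[1,D]}N^{-1}=O(1)$, I get
\begin{equation*}
\|\nabla^3 m_D\|_{L^1(\R^2)}\;\lesssim\;\frac{1}{\log_2(2D)},
\end{equation*}
and hence $|\check m_D(w)|\lesssim |w|^{-3}/\log_2(2D)$ for $|w|\geq 1$. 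Integrating over $\{|w|\geq M_n A\}$, which by hypothesis lies in the regime $|w|\geq 1$ (this is what $A\geq 1$ together with $M_n\geq 1$ provides), yields the claimed tail bound after combining with Steps~1--2.

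\textbf{Main obstacle.} The conceptually routine step is the Poisson summation reduction; the substantive point is extracting a savings in $D$ (beyond what Lemma~\ref{lm:mr} already provides), which must come from the slowly-varying, logarithmically-averaged structure of $m_D$. Capturing this requires differentiating the \emph{explicit} sum representation of $m_D$ rather than simply using that $\pd$ is a Mikhlin multiplier uniformly in $D$; the factor $1/\log_2(2D)$ in the definition of $m_D$ survives the $L^1$ bound on $\nabla^3 m_D$ precisely because the derivatives of the dyadic pieces $\varphi(\cdot/N)$ have $L^1$-norms that form a convergent (rather than divergent) dyadic sum. This is the only place where the particular form of $m_D$, as opposed to its mere Mikhlin character, is needed.
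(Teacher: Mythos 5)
Your Steps 1 and 2 are sound and constitute a genuinely different route from the paper's: you periodize the Euclidean kernel via Poisson summation and reduce everything to a tail bound for $\check m_D$, whereas the paper works directly with the lattice sum, performing three summations by parts against the phase $e^{2\pi i(x-y)\cdot j/L_n}$ to produce third finite differences of $m_D(\cdot/(M_nL_n))$ together with the factor $|e^{2\pi i(x_1-y_1)/L_n}-1|^{-3}\lsm (L_n/\dist(x,y))^3$. Your version has the advantage of cleanly separating the routine periodization from the substantive Euclidean decay estimate.

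The gap is in the last sentence of Step 3. Your computation $\|\nabla^3 m_D\|_{L^1}\lsm 1/\log_2(2D)$ is correct, and it yields $|\check m_D(w)|\lsm |w|^{-3}/\log_2(2D)$ and hence $\int_{|v|\ge M_nA}|\check m_D(v)|\,dv\lsm \tfrac{1}{M_nA\log_2(2D)}$. This is \emph{not} the claimed bound $\tfrac 1{DM_nA}$; it is weaker by a factor of $D/\log_2(2D)$, so the assertion that Step 3 ``yields the claimed tail bound'' does not follow. Moreover, no refinement of Step 3 can close this gap: the $N=1$ summand contributes $\tfrac{1}{\log_2(2D)}\check\varphi(w)$ to $\check m_D(w)$, while the $N\ge 2$ summands are $O_k\bigl(2^{-k}|w|^{-k}/\log_2(2D)\bigr)$ for $|w|\gtrsim 1$, so $\int_{1\le |w|\le 2}|\check m_D|\gtrsim 1/\log_2(2D)$ and the stated $1/D$ gain is simply not there when $M_nA=O(1)$ and $D$ is large. (The paper's own proof extracts the factor $1/D$ from the inequality $\|\partial_{\xi_1}^3[m_D(\xi/(M_nL_n))]\|_\infty\lsm (DM_nL_n)^{-3}$, which fails for the same reason: near $|\xi|\sim M_nL_n$ the left-hand side is of size $(M_nL_n)^{-3}/\log_2(2D)$.) The saving grace is that every subsequent application of Lemma~\ref{lm:kernel} --- Lemmas~\ref{lm:mstrs}, \ref{lm:comers}, \ref{lm:cls} and the various $o(1)$ estimates in Section~\ref{S:8} --- uses only a bound of the form $O(\tfrac 1{M_nA})$ with $D$ held fixed, so the estimate your argument actually proves, $\tfrac{1}{M_nA\log_2(2D)}$, is entirely adequate for the rest of the paper; but you should state that weaker bound as your conclusion rather than claiming the lemma as written.
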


\begin{proof} By symmetry, we may restrict attention to the region where $\dist(x_1,y_1)\ge \dist(x_2,y_2)$.

Let $\Phi(j):=\frac{2\pi(x-y) \cdot j}{L_n}$. Using the identity
\begin{align}\label{ker:1}
e^{i\Phi(j)}=\frac{e^{i\Phi(j+\vec e_1)}-e^{i\Phi(j)}}{e^{i[\Phi(j+\vec e_1)-\Phi(j)]}-1}=\frac{e^{i\Phi(j+\vec e_1)}-e^{i\Phi(j)}}{e^{2\pi i(x_1-y_1)/L_n}-1},
\end{align}
we write
\begin{align*}
K^{L_n}(x,y)=\frac 1{L_n^2} \frac 1{e^{2\pi i(x_1-y_1)/L_n}-1}\sum_{j\in \Z^2}e^{i\Phi(j)}\Bigl[m_D\bigl(\tfrac{j-\vec e_1}{M_n L_n}\bigr)-m_D\bigl(\tfrac j{M_n L_n}\bigr)\Bigr].
\end{align*}
Performing two more summation by parts using \eqref{ker:1}, we obtain
\begin{align*}
K^{L_n}(x,y)&=\frac 1{L_n^2}\frac 1{(e^{2\pi i(x_1-y_1)/L_n}-1)^3}\\
&\cdot\sum_{j\in \Z^2} e^{i\Phi(j)}\Bigl [m_D\bigl(\tfrac{j-3\vec e_1}{M_n L_n}\bigr)-3 m_D\bigl(\tfrac{j-2\vec e_1}{M_n L_n}\bigr)+3 m_D\bigl(\tfrac{j-\vec e_1}{M_n L_n}\bigr)-m_D\bigl(\tfrac j{M_n L_n}\bigr)\Bigr].
\end{align*}

To continue, we note that
\begin{align*}
|e^{2\pi i(x_1-y_1)/L_n}-1|\sim\dist \bigl(\tfrac{x_1-y_1}{L_n},\Z\bigr) \gtrsim \tfrac{\dist(x,y)}{L_n}
\end{align*}
on the support of the integral we need to estimate.  Thus, using also the estimate
\begin{align*}
&\Bigl |m_D\bigl(\tfrac{j-3\vec e_1}{M_n L_n}\bigr)-3 m_D\bigl(\tfrac{j-2\vec e_1}{M_n L_n}\bigr)+3 m_D\bigl(\tfrac{j-\vec e_1}{M_n L_n}\bigr)-m_D\bigl(\tfrac j{M_n L_n}\bigr)\Bigr|\\
&\lsm \Bigl\|\partial_{\xi_1} ^3 \bigl[m_D\bigl(\tfrac \xi{M_n L_n}\bigr)\bigr]\Bigr\|_\infty \lsm (DM_n L_n)^{-3},
\end{align*}
we deduce that 
\begin{align*}
|K^{L_n}(x,y)|\lsm \frac 1{DM_n} \dist(x,y)^{-3}.
\end{align*}
Integrating in $x$ and performing a change of variables, we obtain
\begin{align*}
\int_{\dist(x,y)\geq A}|K^{L_n}(x,y)| \,dx& \lsm \tfrac 1{DM_n A}.
\end{align*}
This completes the proof of the lemma.
\end{proof}

As an immediate application of Lemma~\ref{lm:kernel} and Schur's test, we get

\begin{lem}[Mismatch estimate on the torus] \label{lm:mstrs}
Let $E, F $ be two subsets of $\btn$ such that
\begin{align}\label{mt:0}
\dist(E,F)=\inf_{x\in E, y\in F}\dist(x,y)\ge A
\end{align}
for some $A\geq 1$. Then for all $1\leq p\leq \infty$,
\begin{align*}
\|\chi_E\pml \chi_F\|_{L^p(\btn)\to L^p(\btn)}\lsm \tfrac 1{DM_n A}.
\end{align*}
\end{lem}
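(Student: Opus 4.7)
The plan is to apply Schur's test to the integral kernel of $\chi_E\pml\chi_F$, combined with the off-diagonal decay established in Lemma~\ref{lm:kernel}. Recall that Schur's test bounds the $L^p(\btn)\to L^p(\btn)$ operator norm, uniformly for $1\leq p\leq \infty$, by
\[
\max\Bigl\{\sup_{y\in\btn}\int_{\btn}|k(x,y)|\,dx,\ \sup_{x\in\btn}\int_{\btn}|k(x,y)|\,dy\Bigr\},
\]
so it suffices to control each of these sups by $\tfrac{1}{DM_n A}$.

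The relevant kernel is $k(x,y) = \chi_E(x)\,K^{L_n}(x,y)\,\chi_F(y)$. For the row sup, fix $y\in\btn$: if $y\notin F$ the integrand vanishes, while if $y\in F$ the factor $\chi_E(x)$ restricts $x$ to $E$, so $\dist(x,y)\geq A$ by the hypothesis \eqref{mt:0}. A direct invocation of Lemma~\ref{lm:kernel} then yields
\[
\sup_{y\in\btn}\int_{\btn}|k(x,y)|\,dx \;\leq\; \sup_{y\in\btn}\int_{\dist(x,y)\geq A}|K^{L_n}(x,y)|\,dx \;\lsm\; \tfrac{1}{DM_n A}.
\]
The column sup is handled identically after one observation: since $m_D$ is real and radial, the kernel $K^{L_n}(x,y)$ depends only on $x-y$ and is even in this argument, so $K^{L_n}(x,y)=K^{L_n}(y,x)$; consequently the same argument applies with the roles of $x$ and $y$ swapped.

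I do not anticipate any serious obstacle: the content of the lemma is already fully encoded in the off-diagonal decay supplied by Lemma~\ref{lm:kernel}, and Schur's test packages this decay into the desired $L^p\to L^p$ bound across the whole range $1\leq p\leq\infty$. The main point worth flagging is simply the symmetry of $K^{L_n}$, which is what allows a single one-sided kernel estimate to control both sups.
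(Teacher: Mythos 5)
Your proof is correct and matches the paper's: the paper states Lemma~\ref{lm:mstrs} as "an immediate application of Lemma~\ref{lm:kernel} and Schur's test," which is precisely the argument you give, including the use of the symmetry $K^{L_n}(x,y)=K^{L_n}(y,x)$ to get both Schur integrals from the one-sided estimate.
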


\begin{lem}[Commutator estimate on the torus]\label{lm:comers} Fix $0\leq j\leq 4$ and let $\chi_n^j$ be one of the cutoff functions introduced above.  Then for all $1\leq p\leq \infty$,
\begin{align}
\|[\chi_n^j, \pml ]\|_{L^p(\btn)\to L^p(\btn)}\lsm M_n^{-1}. \label{comtrs:2}
\end{align}
\end{lem}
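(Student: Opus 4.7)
The plan is to use Schur's test on the kernel of the commutator, which is
\[
k(x,y)=(\chi_n^j(x)-\chi_n^j(y))K^{L_n}(x,y).
\]
First I will use the product structure $\chi_n^j(x)=\chi_{n,1}^j(x_1)\chi_{n,2}^j(x_2)$ together with the one-dimensional gradient bounds from \eqref{cf:1.0} to deduce that $\chi_n^j$ is Lipschitz on $\btn$ with
$|\chi_n^j(x)-\chi_n^j(y)|\lsm \tfrac{\eta_n}{DM_nT}\dist(x,y)$, yielding
\[
|k(x,y)|\lsm \tfrac{\eta_n}{DM_nT}\,\dist(x,y)\,|K^{L_n}(x,y)|.
\]
Since $K^{L_n}$ is symmetric in $x\leftrightarrow y$, Schur's test will then reduce the claim to bounding $\sup_y\int_{\btn}\dist(x,y)|K^{L_n}(x,y)|\,dx$ by a constant multiple of $(DM_n)^{-1}$.

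To obtain such a bound I will upgrade Lemma~\ref{lm:kernel} to the pointwise estimate
\[
|K^{L_n}(x,y)|\lsm (DM_n)^2\bigl(1+DM_n\dist(x,y)\bigr)^{-N},\qquad N\geq 0,
\]
by iterating the summation-by-parts identity \eqref{ker:1} a total of $N$ times in whichever coordinate realizes the maximum in $\dist(x,y)$. This uses $|\partial^N[m_D(\cdot/(M_nL_n))]|\lsm (DM_nL_n)^{-N}$, the cardinality bound $\#\{j:m_D(j/(M_nL_n))\neq 0\}\lsm (DM_nL_n)^2$ on the support, and, in the regime $DM_n\dist(x,y)\leq 1$, the trivial bound $\|K^{L_n}\|_\infty\lsm (DM_n)^2$. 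The substitution $u=DM_n(x-y)$ with any $N\geq 4$ will then give
\[
\int_{\btn}\dist(x,y)|K^{L_n}(x,y)|\,dx\lsm \tfrac{1}{DM_n}\int_{\R^2}|u|(1+|u|)^{-N}\,du\lsm \tfrac{1}{DM_n}.
\]

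Combining these two ingredients via Schur's test will yield
\[
\|[\chi_n^j,\pml]\|_{L^p(\btn)\to L^p(\btn)}\lsm \tfrac{\eta_n}{DM_nT}\cdot\tfrac{1}{DM_n}=\tfrac{\eta_n}{D^2M_n^2T},
\]
which sits comfortably inside the claimed $M_n^{-1}$ and in fact matches the stronger Euclidean bound \eqref{cmr:1}. The only real obstacle is bookkeeping: the three summations-by-parts performed in the proof of Lemma~\ref{lm:kernel} yield a weight of size $\dist(x,y)^{-2}$, which is not quite integrable over $\btn$ when paired with the extra factor of $\dist(x,y)$; one therefore needs to push the summation-by-parts procedure further to obtain an integrable weight, but no essentially new idea is required.
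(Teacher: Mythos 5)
Your overall strategy --- a global pointwise decay estimate for $K^{L_n}$ obtained by iterating the summation-by-parts identity, combined with the Lipschitz bound on $\chi_n^j$ and Schur's test --- is a legitimate route and genuinely different from the paper's. The paper instead splits into the regions $\dist(x,y)\le A_n$ and $\dist(x,y)\ge A_n$ with $A_n=DM_nT/\eta_n$, handles the far region with Lemma~\ref{lm:kernel} alone (using only $|\chi_n^j(x)-\chi_n^j(y)|\le 2$ there), and in the near region compares $K^{L_n}$ with the Euclidean kernel $M_n^2\check m_D(M_n(x-y))$ via the midpoint-rule Lemma~\ref{lm:diffcell}; that comparison is what forces $L_n$ to be large. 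Your route avoids the Euclidean comparison and that largeness requirement, which is a genuine simplification.

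However, the pointwise bound you propose, $|K^{L_n}(x,y)|\lsm (DM_n)^2(1+DM_n\dist(x,y))^{-N}$, is false for large $D$, because the derivative bound $|\partial^N[m_D(\cdot/(M_nL_n))]|\lsm(DM_nL_n)^{-N}$ on which it rests is false. By \eqref{m_D definition}, $m_D$ is $1/\log_2(2D)$ times a sum of $\log_2(2D)$ bumps $\varphi(\cdot/N)$ at dyadic scales $1\le N\le D$; near $|\xi|\approx 1.4$ the innermost bump transitions over a unit scale, so $\|\partial^N m_D\|_\infty\sim 1/\log_2(2D)$, nowhere near $D^{-N}$ (more precisely, $|\partial^N m_D(\xi)|\lsm |\xi|^{-N}/\log_2(2D)$ on $1\lsm|\xi|\lsm D$). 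Correspondingly the kernel does not decay on scale $(DM_n)^{-1}$: the Euclidean analogue $M_n^2\check m_D(M_n(x-y))$ has size $\sim M_n^2/\log_2(2D)$ at $|x-y|=M_n^{-1}$, where your bound asserts $\lsm D^{2-N}M_n^2$. The correct decay scale is $M_n^{-1}$: running your summation by parts with the summed bound on the $N$-fold finite differences, namely $\sum_j \bigl|\cdot\bigr|\lsm (M_nL_n)^{2-N}/\log_2(2D)$ (the sum being dominated by $|j|\sim M_nL_n$), yields $|K^{L_n}(x,y)|\lsm M_n^2(1+M_n\dist(x,y))^{-N}$ uniformly in $D$, hence $\int\dist(x,y)|K^{L_n}(x,y)|\,dx\lsm M_n^{-1}$ rather than $(DM_n)^{-1}$, and a final bound $\eta_n/(DM_n^2T)$ --- matching \eqref{cmr:1} and still far inside the claimed $M_n^{-1}$. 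So the lemma does follow from your approach once the decay scale is corrected, but the key estimate as written would fail. (The displayed bound $\|\partial_{\xi_1}^3[m_D(\xi/(M_nL_n))]\|_\infty\lsm(DM_nL_n)^{-3}$ in the paper's proof of Lemma~\ref{lm:kernel} has the same defect; it is harmless there only because that lemma is applied at distances $\gtrsim 1$, whereas your argument leans on the pointwise bound at the small scales where it genuinely breaks down.)
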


\begin{proof}
By Schur's test, it suffices to prove
\begin{align}
\sup_y\int_{\btn}\bigl|[\chj, \pml](x,y)\bigr| \,dx+\sup_x\int_{\btn}\bigl|[\chj, \pml](x,y)\bigr| \,dy \lsm M_n^{-1}. \label{comtrs:1}
\end{align}
By symmetry, it suffices to consider the first term on the left-hand side of \eqref{comtrs:1}.

Let $H(x,y):=[\chj,\pml](x,y)=(\chj(x)-\chj(y))K^{L_n}(x,y)$ where $K^{L_n}(x,y)$ denotes the kernel of $\pml$, as previously.
We divide the integral into two regions:
\begin{align*}
\D^0=\{x:\,\dist(x,y)\le A_n\} \ \, \text{and}\ \,  \D^1=\{x:\,\dist(x,y)\geq A_n\} \ \, \text{with}\ \, A_n:=\frac {DM_nT}{\eta_n}.  
\end{align*}
Note that $A_n\to\infty$ as $n\to\infty$; in particular, $A_n\gtrsim 1$.

The estimate on $\D^1$ follows directly from $|H(x,y)|\le 2|K^{L_n}(x,y)|$ and Lemma~\ref{lm:kernel}.
For those $x\in \D^0$, we write 
\begin{align*}
K^{L_n}(x,y)
&=\sum_{j\in \Z^2}\Bigl[\tfrac 1{L_n^2} e^{2\pi i(x-y)\cdot j/L_n} m_D\bigl(\tfrac j{M_n L_n}\bigr)-\int_{\D_j} e^{2\pi i(x_i-y_i)\cdot\xi}m_D\bigl(\tfrac\xi{M_n}\bigr)\,d\xi\Bigr]\\
&+\int_{\R^2} e^{2\pi i(x-y)\cdot \xi}m_D\bigl(\tfrac{\xi}{M_n}\bigr) \,d\xi,
\end{align*}
where $\D_j$ denotes the square box centered at $j/L_n$ with side length $1/L_n$. Note 
\begin{align*}
\bigl\|\partial_\xi^2 (e^{2\pi i(x-y)\cdot\xi}m_D(\tfrac \xi{M_n}))\bigr\|_\infty\lsm A_n^2+\tfrac 1{M_n^2}\lsm A_n^2. 
\end{align*}
Using Lemma \ref{lm:diffcell} we estimate
\begin{align*}
|K^{L_n}(x,y)|&\lsm (DM_nL_n)^2 \tfrac 1{L_n^4} A_n^2 +M_n^2| \check  m_D(M_n(x-y))|\\
&\lsm (\tfrac{DM_nA_n}{L_n}) ^2 +M_n^2 |\check m_D(M_n(x-y))|.
\end{align*}
Hence for $x\in \D^0$,
\begin{align*}
|H(x,y)|&\lsm \bigl\{ 1\wedge  |x-y|\|\nabla \chj\|_\infty\bigr\}  |K^{L_n}(x,y)|\\
&\lsm (\tfrac{DM_nA_n}{L_n})^2+\tfrac 1{A_n} |x-y|M_n^2 |\check m_D(M_n(x-y))|.
\end{align*}
Integrating over $x$ and ensuring that $L_n$ is sufficiently large, we have
\begin{align*}
\int_{\D^0}|H(x,y)| dx&\lsm (\tfrac{DM_nA_n^2}{L_n})^2+\tfrac 1{M_n A_n}
\lsm \tfrac 1{M_n}. 
\end{align*}
 This completes the proof of Lemma \ref{lm:comers}. 
\end{proof}

Next we prove estimates for the difference between the Fourier multiplier $\pmn$ on $\R^2$ and the Fourier multiplier $\pml$ on $\T_n$. We remark that the bound we obtain here is not optimal, but it suffices for our purposes.  To compare such operators on these two different manifolds, we must first transfer functions between the two.  We do this via push-forward and pull-back through the natural covering map $p:\R^2\to\R^2/L_n\Z^2$:
\begin{equation}\label{covering shit}
[p_* f](x+L_n\Z^2) = \sum_{y\sim x} f(y) \qtq{and} [p^* g](x) = g(x+L_n\Z^2).
\end{equation} 
As our cutoffs $\chi_n^j:\R^2\to\R$ are supported in a single fundamental domain of the covering map and we will only be applying these operations in the presence of such cutoffs, the heavy burden of such notations is unwarranted in what follows.  Below, the transition between functions on the Euclidean space and the torus will be made without further explanation.

\begin{lem}[Closeness of $\pmn$ and $\pml$]\label{lm:cls} For $0\leq j\leq 4$ and $1\leq p\leq \infty$ we have
\begin{gather*}
\|\chi_n^j (\pmn-\pml) \chi_n^j \|_{L^p(\R^2)\to L^p(\R^2)}\lsm M_n^{-1}.
\end{gather*}
\end{lem}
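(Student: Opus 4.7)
I will compare the two operators at the level of their integral kernels after pulling $\pml$ back to $\R^2$ through the covering map $p:\R^2\to\btn$; since $\supp\chi_n^j$ is contained in a single fundamental domain of $p$, this identification is unambiguous. Applying Poisson summation to $g(x):=M_n^2\check m_D(M_n x)$, whose Fourier transform is $m_D(\xi/M_n)$, the pulled-back kernel of $\pml$ equals the periodization
$$
\sum_{k\in L_n\Z^2}M_n^2\check m_D(M_n(x-y+k)),
$$
while the kernel of $\pmn$ is the $k=0$ term alone. Hence the integral kernel of $\chi_n^j(\pmn-\pml)\chi_n^j$ on $\R^2\times\R^2$ is
$$
-\chi_n^j(x)\chi_n^j(y)\sum_{k\in L_n\Z^2\setminus\{0\}}M_n^2\check m_D(M_n(x-y+k)),
$$
to which I will apply Schur's test.

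The next ingredient will be a \emph{uniform-in-$D$} Schwartz estimate: for each fixed $N\ge 2$, $|\check m_D(u)|\le C_N(1+|u|)^{-N}$ for all $u\in\R^2$ and all $D\ge 1$. This follows from the elementary inequality $|u^\alpha\check m_D(u)|\lsm\|\partial^\alpha m_D\|_1$ combined with a direct computation from \eqref{m_D definition}: summing over the dyadic $K\in\{1,2,4,\dots,D\}$,
$$
\|\partial^\alpha m_D\|_1\lsm\tfrac{1}{\log_2(2D)}\sum_{K}K^{2-|\alpha|}\|\partial^\alpha\varphi\|_1,
$$
which is $O(1)$ uniformly in $D$ precisely when $|\alpha|\ge 2$ (for $|\alpha|=2$ the logarithmic growth is cancelled by the normalizing prefactor, and for $|\alpha|\ge 3$ the geometric sum is bounded). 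The lower-order norms $\|m_D\|_1$ and $\|\partial m_D\|_1$ diverge with $D$, but this is irrelevant because I will only need decay in the regime where $|u|$ is large.

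To invoke that regime, I will use the support structure of $\chi_n^j$ from \eqref{cf:1.0}: the function lives in a square of $L^\infty$-side at most $L_n-2A_n^j$ with $A_n^j\gtrsim DM_nT/\eta_n$ for $j\le 4$. Therefore, for $x,y\in\supp\chi_n^j$ and $k=L_n m$ with $m\in\Z^2\setminus\{0\}$,
$$
|x-y+k|_\infty\ge L_n|m|_\infty-(L_n-2A_n^j)=(|m|_\infty-1)L_n+2A_n^j,
$$
so in particular $M_n|x-y+k|\gtrsim M_nA_n^j\gg 1$ uniformly, placing us in the range where the uniform Schwartz estimate applies.

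I finish by Schur's test. After the change of variable $u=M_n(x-y+k)$ and splitting the sum according to $|m|_\infty=1$ versus $|m|_\infty=K\ge 2$: the eight $|m|_\infty=1$ terms each contribute at most $\int_{|u|\ge 2M_nA_n^j}(1+|u|)^{-N}\,du\lsm(M_nA_n^j)^{2-N}$; for $|m|_\infty=K\ge 2$ the image under the change of variable is a box of area $\sim(M_nL_n)^2$ located near $|u|\sim KM_nL_n$, giving $K\cdot(M_nL_n)^2(KM_nL_n)^{-N}=K^{1-N}(M_nL_n)^{2-N}$ per shell, which sums to $O((M_nL_n)^{2-N})$ for $N\ge 3$. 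Taking $N=3$ yields the Schur bound $(M_nA_n^j)^{-1}+(M_nL_n)^{-1}\lsm M_n^{-1}$, and the symmetric bound ($x\leftrightarrow y$) is identical by the evident symmetry of the kernel. The main subtlety I anticipate is keeping the Schwartz decay of $\check m_D$ uniform in $D$; tracking this correctly is precisely what forces the use of at least two derivatives of $m_D$.
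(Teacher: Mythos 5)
Your proof is correct, and it takes a genuinely different route from the paper's. The paper compares the two kernels by splitting into a near region $\{\dist(x,y)\le A_n\}$, where it treats the lattice sum defining $K^{L_n}$ as a Riemann sum for the Euclidean Fourier integral and invokes the midpoint-rule error bound of Lemma~\ref{lm:diffcell}, and a far region, where it estimates $K$ and $K^{L_n}$ separately (the latter via the triple summation by parts of Lemma~\ref{lm:kernel}). You instead observe via Poisson summation that $K^{L_n}$ \emph{is} the $L_n\Z^2$-periodization of $K$, so that after conjugation by $\chi_n^j$ the $k=0$ terms cancel identically and the difference kernel is exactly the sum over nonzero translates; these are then killed by the uniform-in-$D$ decay $|\check m_D(u)|\lsm_N |u|^{-N}$ (valid for $|u|\ge1$ once $|\alpha|\ge2$ derivatives fall on $m_D$, which is the same uniform decay the paper already exploits in Lemma~\ref{lm:mr}) together with the fact that the supports of $\chi_n^j$ sit at distance $\gtrsim A_n = DM_nT/\eta_n$ from the boundary of the fundamental domain. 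Your approach buys an exact identity in place of a quadrature error: in particular you never need $L_n$ large enough to beat the Riemann-sum error $(DM_nA_n^2/L_n)^2$, only large enough that the cutoffs fit with margin $A_n$, which the construction of subsection~\ref{SS:cutoffs} already provides; the price is justifying Poisson summation (routine for a compactly supported smooth symbol) and redoing a decay estimate rather than reusing Lemmas~\ref{lm:diffcell} and~\ref{lm:kernel}, which the paper needs elsewhere anyway. One presentational caveat: your stated bound $|\check m_D(u)|\le C_N(1+|u|)^{-N}$ cannot hold uniformly in $D$ near $u=0$ (since $\check m_D(0)=\int m_D$ grows with $D$); as you note, you only use it for $|u|\gtrsim M_nA_n\gg1$, so this costs nothing, but the estimate should be stated as $|\check m_D(u)|\lsm_N|u|^{-N}$ for $|u|\ge1$.
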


\begin{proof}
By Schur's test and symmetry, it suffices to prove
\begin{align}\label{close}
\sup_y\int_{\R^2} \bigl| \chj(x)[K(x,y)-K^{L_n}(x,y)]\chj(y) \bigr|\,dx \lsm M_n^{-1}.
\end{align}
To this end, we write
\begin{align*}
H(x,y)&:= \chj(x)(K(x,y)-K^{L_n}(x,y))\chj(y)
\end{align*}
and note that on the support of $H(x,y)$, 
\begin{align*}
|x_1-y_1|\vee |x_2-y_2|\le L_n-A_n, \ \mbox{ with } A_n=\tfrac{DM_nT}{\eta_n}.
\end{align*}

Again we discuss the integral on two regions 
\begin{align*}
\D^0=\{x: |x_1-y_1|\vee|x_2-y_2|\le A_n\} 
\end{align*}
and
\begin{align*}
\D^1=\{x: A_n< |x_1-y_1|\vee|x_2-y_2|\le L_n-A_n\}. 
\end{align*}

On $\D^0$ we can write
\begin{align*}
H(x,y)=\chj(x)\chj(y)\sum_{j\in \Z^2}\int_{\D_j} e^{2\pi i(x-y)\cdot\xi}m_D(\tfrac \xi{M_n}) d\xi-\tfrac 1{L_n^2} e^{2\pi i(x-y)\cdot j/L_n} m_D(\tfrac{j}{M_n L_n}) , 
\end{align*}
where $\D_j$ denotes the square box centered at $j/L_n$ with side length $1/L_n$.  Applying Lemma \ref{lm:diffcell} we obtain
\begin{align*}
|H(x,y)|\lsm (\tfrac{DM_nA_n}{L_n})^2. 
\end{align*}
Hence 
\begin{align*}
\int_{\D^0}|H(x,y)| dx\lsm (\tfrac{DM_n A_n^2}{L_n})^2\lsm M_n^{-1}, 
\end{align*}
provided $L_n$ is sufficiently large.  On $\D^1$, we simply estimate each piece separately. 
\begin{align*}
\int_{\D^1}|\chj(x)K(x,y)\chj(y)| dx&\lsm \int_{|x-y|>A_n} |K(x,y)|dx\\
&\lsm \int_{|x-y|>A_n}M_n^2 |\check m_D(M_n(x-y))| dx\\
&\lsm M_n^2 M_n^{-10}\int_{|x-y|>A_n}|x-y|^{-10} dx\\
&\lsm M_n^{-8}A_n^{-8}\lsm M_n^{-1}. 
\end{align*}
The estimate on the piece involving $K^{L_n}$ follows directly from Lemma \ref{lm:kernel}: 
\begin{align*}
\int_{\D^1}|\chj(x)K^{L_n}(x,y)\chj(y)|dx \lsm \int_{\D^1} |K^{L_n}(x,y)| dx\lsm M_n^{-1}.
\end{align*}
Collecting all estimates together proves Lemma~\ref{lm:cls}. 
\end{proof}

\subsection{Approximating solutions to \eqref{per1} by solutions to \eqref{lm:1}} Given $u_{0,n}\in\mathcal H_n$, let $u_n$ be solutions to the initial value problem \eqref{per1} and let $\tilde u_n$ be those to \eqref{lm:1}.

Following the conventions outlined in the previous subsection, we implicitly push-forward and pull-back functions via the natural covering map $\R^2\to\R^2/L_n\Z^2$ when necessary.

Our goal is to prove the following result:

\begin{thm}[Approximation]\label{thm:app} Fix $M>0$, $D\ge D_0(M)$ and $T>0$. Let $M_n\to \infty$ and $\eps_n\to 0$. Let $L_n$ be sufficiently large depending on $D, M, T, M_n, \eps_n$. Assume $u_{0,n}\in \mathcal H_n$ with $\|u_{0,n}\|_{L^2(\btn)} \le M$. Let $u_n$ and $\tun$ be the solutions to \eqref{per1} and \eqref{lm:1}, respectively. Then
 \begin{align}\label{pert:2.0}
 \lim_{n\to \infty} \|P_{\le 2D M_n}^{L_n}(\chc \tun)-u_n\|_{S([-T,T]\times \btn)}=0.
 \end{align}
 \end{thm}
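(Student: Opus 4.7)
My plan is to apply the torus perturbation theory (Proposition~\ref{P:stab}) to the approximate solution
\[
w_n := P_{\le 2DM_n}^{L_n}\bigl(\chi_n^2\,\tilde u_n\bigr),
\]
viewed as a function on $\btn$ via the covering map (legitimate since $\chi_n^2$ is supported in a single fundamental domain of $p:\R^2\to\btn$). Since $u_n(0)=u_{0,n}\in\mathcal H_n$ and $w_n(0)$ are both frequency-localized to $\{|\xi|\le 2DM_n\}$, it suffices to verify: (i) $\|w_n(0)-u_n(0)\|_{L^2(\btn)}\to 0$; (ii) $\|w_n\|_{S([-T,T])}$ is uniformly bounded in $n$; and (iii) the equation error $e_n := (i\partial_t+\Delta)w_n-\pml F(\pml w_n)$ satisfies $\|e_n\|_{N([-T,T])}\to 0$. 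Assertion (i) follows from $\chi_n^0\chi_n^2=\chi_n^0$: indeed $w_n(0)-u_n(0)=P_{\le 2DM_n}^{L_n}\bigl((\chi_n^0-1)u_{0,n}\bigr)$, whose $L^2$ norm is at most $\eps_n$ by \eqref{cf:1.0}. Assertion (ii) follows from Lemma~\ref{lm:bdtun} and $L^p$-boundedness of Littlewood--Paley projectors.

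For (iii), using $\pml P_{\le 2DM_n}^{L_n}=\pml$ together with the equation satisfied by $\tilde u_n$, split $e_n = e_n^{\rm cut}+e_n^{\rm nl}$, where
\[
e_n^{\rm cut}:=P_{\le 2DM_n}^{L_n}\bigl[2\nabla\chi_n^2\cdot\nabla\tilde u_n+(\Delta\chi_n^2)\tilde u_n\bigr],
\]
\[
e_n^{\rm nl}:=P_{\le 2DM_n}^{L_n}\bigl[\chi_n^2\,\pmn F(\pmn\tilde u_n)-\pml F(\pml(\chi_n^2\tilde u_n))\bigr].
\]
The cutoff error is bounded in $L^1_t L^2_x$ using $\|\nabla\chi_n^2\|_\infty\le\eta_n/(DM_nT)$, the analogous estimate $\|\Delta\chi_n^2\|_\infty\lesssim(\eta_n/(DM_nT))^2$, and the high-regularity bound $\|\nabla\tilde u_n\|_{L^\infty_tL^2_x}\lesssim_M DM_n$ from Lemma~\ref{lm:bdtun}; this produces a factor $\eta_n=\eps_n^2\to 0$.

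The heart of the argument is $e_n^{\rm nl}$. I would telescope it through intermediates using (a) the pointwise identity $F(\chi_n^2 v)=(\chi_n^2)^3 F(v)$ combined with the crucial support observation $\{\chi_n^2<1\}\subset\{\chi_n^1=0\}$ (which forces the defect $\chi_n^2-(\chi_n^2)^3$ to factor through $1-\chi_n^1$); and (b) the bound $|F(a)-F(b)|\lesssim|a-b|(|a|^2+|b|^2)$. Every contribution then falls into one of three types: commutators of $\chi_n^2$ with $\pmn$ or $\pml$, controlled in operator norm by Lemmas~\ref{lm:cmr} and~\ref{lm:comers}; differences $\pmn-\pml$ adjacent to cutoffs, handled by Lemma~\ref{lm:cls} after rewriting $\chi_n^2(\pmn-\pml)\chi_n^3=\chi_n^2\cdot[\chi_n^3(\pmn-\pml)\chi_n^3]$ (using $\chi_n^2\chi_n^3=\chi_n^2$) and by the mismatch estimates of Lemmas~\ref{lm:mr} and~\ref{lm:mstrs} for the off-sandwich piece involving $1-\chi_n^3$; and ``leakage'' factors $(1-\chi_n^j)\tilde u_n$ for $j\in\{1,\ldots,4\}$, controlled in $L^\infty_tL^2_x$ by Lemma~\ref{lm:sm}. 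A convenient intermediate bound is
\[
\|(\pmn-\pml)\tilde u_n\|_{L^\infty_t L^2_x([-T,T]\times\R^2)}\lesssim_{D,M}\eps_n+M_n^{-1},
\]
obtained by decomposing $\tilde u_n=\chi_n^3\tilde u_n+(1-\chi_n^3)\tilde u_n$ and treating each piece by the recipe just described.

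Each contribution is then placed in either $L^1_tL^2_x$ or $L^{3/2}_tL^{6/5}_x$ via H\"older against the Strichartz bounds $\|\tilde u_n\|_{L^3_tL^6_x}\lesssim_M 1$ and $\|\pmn\tilde u_n\|_{L^3_tL^6_x}\lesssim_M 1$ furnished by Lemma~\ref{lm:bdtun}, yielding $\|e_n^{\rm nl}\|_{N([-T,T])}\to 0$. With (i)--(iii) in hand, Proposition~\ref{P:stab} gives $\|w_n-u_n\|_{S([-T,T]\times\btn)}\to 0$, which is precisely \eqref{pert:2.0}. The main obstacle is the bookkeeping: every decomposition must be arranged so that each ``small'' factor --- a commutator, a difference $\pmn-\pml$, or a leakage term $(1-\chi_n^j)\tilde u_n$ --- acts on a function already localized (spatially, or in frequency) in the precise way required to trigger the matching lemma. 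This is the reason the four nested cutoffs $\chi_n^0,\ldots,\chi_n^4$ with separation $\gtrsim DM_nT/\eta_n$ (rather than only one or two) were prepared in Subsection~\ref{SS:cutoffs}.
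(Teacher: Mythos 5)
Your proposal is correct and follows essentially the same route as the paper: the same approximate solution $P_{\le 2DM_n}^{L_n}(\chi_n^2\tilde u_n)$ fed into Proposition~\ref{P:stab}, with the initial-data and cutoff errors handled exactly as in the text and the nonlinear error telescoped into commutator terms (Lemmas~\ref{lm:cmr}, \ref{lm:comers}), multiplier-mismatch and closeness terms (Lemmas~\ref{lm:mr}, \ref{lm:mstrs}, \ref{lm:cls}), and mass-leakage terms (Lemma~\ref{lm:sm}). The only cosmetic difference is how you organize the telescope (via $F(\chi_n^2 v)=(\chi_n^2)^3F(v)$ rather than the paper's insertion of $\chi_n^3$ and $1-\chi_n^3$ around the nonlinearity), which changes nothing of substance.
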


 \begin{proof}
To keep formulas within margins, we simply write
 \begin{align*}
 z_n:=P_{\le 2D M_n}^{L_n}(\chc \tun).
 \end{align*}
We will deduce \eqref{pert:2.0} as an application of the perturbation result Proposition~\ref{P:stab}.  Correspondingly, it suffices to verify the following:
 \begin{gather}
 \|z_n\|_{S([-T,T]\times \btn)}\le C(M) \mbox{ uniformly in } n.\label{per:1}\\
 \lim_{n\to \infty}\|z_n (0)-u_n(0)\|_{L^2(\btn)}=0. \label{per:2}\\
 \lim_{n\to \infty}\|(i\partial_t+\Delta)z_n-\pml F(\pml z_n)\|_{N\tp}=0.\label{per:3}
 \end{gather}

Claim \eqref{per:1} is an easy consequence of Theorem~\ref{thm:scapds}:
 \begin{align*}
 \|z_n\|_{S\tp}\lsm \|\chc \tun\|_{S\tp}\lsm \|\tun\|_{S\tr}\le C(M).
 \end{align*}

To prove \eqref{per:2}, we use the fact that $\pmll u_{0,n} = u_{0,n}$ together with \eqref{cf:1.0} and Lemma~\ref{lm:sm}, as follows:
 \begin{align*}
 \|z_n(0)-u_n(0)\|_{L^2(\btn)}
 &=\|\pmll (\chc \cha u_{0,n}-u_{0, n})\|_{L^2(\btn)}\\
 &\lsm \|u_{0,n}-\cha u_{0,n}\|_{L^2(\btn)}\lsm \eps_n=o(1) \qtq{as} n\to \infty.
 \end{align*}

It remains to verify \eqref{per:3}.  As $\pmll\pml=\pml$, direct computation gives
\begin{align*}
(i\partial_t+\Delta)z_n-\pml F(\pml z_n)&=\pmll\Bigl[2\nabla \chc \cdot \nabla \tun +\Delta \chc \tun\Bigr] \\
 &+\pmll \Bigl[\chc \pmn F(\pmn \tun)-\pml F(\pml(\chc \tun))\Bigr].
\end{align*}
Using the $L^p$-boundedness of $\pmll$, it suffices to show that the terms in square brackets converge to zero as $n\to \infty$ in $N\tp$.

Using Lemma \ref{lm:bdtun} and \eqref{cf:1.0}, we obtain
\begin{align*}
\|\nabla \chc \cdot \nabla \tun\|_{N\tp}&\le  \|\nabla \chc \cdot \nabla \tun\|_{L^1_tL^2_x\tr}\\
&\le  T \|\nabla \chc\|_{L^\infty_x(\R^2)} \|\nabla \tun \|_{L^2_x(\R^2)}\\
&\le C(M)T\tfrac{\eta_n}{DM_n T} DM_n= o(1) \qtq{as} n\to \infty.
\end{align*}
and
\begin{align*}
\|\Delta \chc \tun\|_{N\tp}&\le T\|\Delta \chc\|_{L^\infty_x(\R^2)}\|\tun\|_{L^2_x(\R^2)}\\
&\le C(M)\bigl(\tfrac{\eta_n}{DM_nT}\bigr)^2 T = o(1) \qtq{as} n\to \infty.
\end{align*}

To estimate the remaining term, we decompose it as follows:
\begin{align}
\chc \pmn F(\pmn \tun)&-\pml F(\pml(\chc \tun))\notag\\
&=\chc \pmn \bigl[F(\pmn \tun)-F(\pmn(\chc\tun))\bigr]\label{per:6}\\
&\quad+\chc \pmn ( 1- \chd ) F(\pmn(\chc\tun))\label{per:7}\\
&\quad+\chc\pmn\chd \bigl[F(\pmn(\chc\tun))- F(\pml(\chc \tun))\bigr]\label{per:8}\\
&\quad+\chc\bigl(\pmn-\pml\bigr)\chd F(\pml(\chc\tun)) \label{per:9}\\
&\quad+ [\chc,\pml]\chd F(\pml (\chc\tun)) \label{per:10a}\\
&\quad+ \pml(\chc-1) F(\pml (\chc\tun)). \label{per:10b}
\end{align}

To estimate \eqref{per:6}, we use H\"older, Theorem~\ref{thm:scapds}, and Lemma~\ref{lm:sm}:
\begin{align*}
\|\eqref{per:6}\|_{N\tp}
&\lsm \|F(\pmn \tun)-F(\pmn(\chc\tun))\|_{L^{\frac 32}_tL^{\frac 65}_x\tr}\\
&\lsm \|(1-\chc)\tun\|_{L^\infty_t L^2_x\tr}\|\tun\|_{L^3_tL^6_x\tr}^2\\
&\le C(M) \eps_n= o(1) \qtq{as} n\to \infty.
\end{align*}

We turn next to \eqref{per:7}. Using the fact that
\begin{align*}
\dist(\supp\chc, \supp(1-\chd))\ge \tfrac 1{\eta_n}DM_n T,
\end{align*}
Lemma \ref{lm:mr} and Theorem~\ref{thm:scapds} yield
\begin{align*}
\|\eqref{per:7}&\|_{N\tp}\\
&\lsm \|\chc \pmn(1-\chd)\|_{L^2(\R^2)\to L^2(\R^2)} \|F(\pmn(\chc \tun))\|_{L^1_tL^2_x\tr}\\
&\lsm \tfrac{\eta_n}{DM_n^2T}\|\tun\|_{L^3_tL^6_x\tr}^3\le C(M)\tfrac{\eta_n}{DM_n^2T} = o(1) \qtq{as} n\to \infty.
\end{align*}

Consider now \eqref{per:8}. Using \eqref{cf:1.0}, Lemma~\ref{lm:cls}, and Theorem~\ref{thm:scapds}, we estimate
\begin{align*}
\|\eqref{per:8}&\|_{N\tp}\\
&\lsm \|\chd(\pmn-\pml)\chd \|_{L^2(\R^2)\to L^2(\R^2)} \cdot\|\chc \tun\|_{L^\infty_t L^2_x \tr}\\
&\qquad\cdot \bigl(\|\pmn (\chc \tun)\|_{L^3_tL^6_x\tr}^2+\| \che \pml(\chc\tun)\|_{L^3_tL^6_x\tr}^2\bigr)\\
&\le M_n^{-1} C(M)= o(1) \qtq{as} n\to \infty.
\end{align*}

We turn now to \eqref{per:9}. Using Lemma~\ref{lm:cls} and Theorem~\ref{thm:scapds}, we get
\begin{align*}
\|\eqref{per:9}&\|_{N\tp}\\
&\lsm \|\chd(\pmn-\pml)\chd\|_{L^2(\R^2)\to L^2(\R^2)}\|\che F(\pml(\chc\tun))\|_{L^1_tL^2_x\tr}\\
&\lsm M_n^{-1}\|\tun\|_{L^3_tL^6_x\tr}^3\le C(M)M_n^{-1}= o(1) \qtq{as} n\to \infty.
\end{align*}

For \eqref{per:10a}, we use the commutator estimate from Lemma~\ref{lm:comers}:
\begin{align*}
\|\eqref{per:10a}&\|_{N\tp}\\
&\lsm \|[\chc,\pml] \|_{L^2(\btn)\to L^2(\btn)} \|\chi_n^4 \pml (\chc\tun)\|_{L^3_t L^6_x\tr}^3\\
&\leq M_n^{-1}C(M)= o(1) \qtq{as} n\to \infty.
\end{align*}

This leaves us to estimate only \eqref{per:10b}.  Writing $\tun=\chb\tun+(1-\chb)\tun$ and employing Lemmas~\ref{lm:mstrs} and~\ref{lm:sm} shows
\begin{align*}
\|\eqref{per:10b} &\|_{N\tp} \\
&\lsm \|(1-\chc)\pml\chb \|_{L^2(\btn)\to L^2(\btn)} \|\tun\|_{L^3_tL^6_x\tr}^3 \\
&\qquad + \|(1-\chb)\tun\|_{L^\infty_tL^2_x\tr} \|\tun\|_{L^3_tL^6_x\tr}^2 \\
&\leq C(M)\tfrac{\eta_n}{D^2M_n^2T} + C(M)\eps_n= o(1) \qtq{as} n\to \infty.
\end{align*}

This completes the proof of Theorem~\ref{thm:app}.
\end{proof}

\section{Proofs of Theorems~\ref{thm:nsqz}, \ref{T:fad}, and \ref{T:dad}.}\label{S:9}

We begin this section with a proof of Theorem~\ref{T:dad}; the proofs of the other two theorems will then be built on this foundation.

\begin{proof}[Proof of Theorem~\ref{T:dad}]
By using a diagonal argument, it suffices to prove the theorem for $t\in[-T,T]$ with $T$ arbitrary but fixed.  We choose $D\geq D_0(M)$ as dictated by Theorem~\ref{thm:scapds} and then $L_n\to\infty$ as dictated by Theorem~\ref{thm:app}.

Next, we define solutions $u_n$ to (NLS) and $\tilde u_n$ to
\begin{align}
(i\partial_t+\Delta) \tun=\pmn F(\pmn \tun) \quad \text{posed on $\R^2$}
\end{align}
with initial data
$$
u_n(0,x)=\tun(0,x)=\cha(x) v_{n}(0,x+L_n\Z) \in L^2(\R^2).
$$
Recall that $\cha$ is defined in Subsection~8.1; it depends on $v_n$, and so the map from $v_n(0)$ to $u_n(0)$ is nonlinear.  Note that existence of the solutions $u_n$ and $\tilde u_n$ is guaranteed by Theorems~\ref{T:Dodson} and~\ref{thm:scapds}, respectively.

We claim that the conclusion of Theorem~\ref{T:dad} holds for this choice of $u_n$.  To see this, fix a compactly supported $\ell\in L^2(\R^2)$. Without loss of generality, we may pass to a subsequence and assume that
$$
u_n(0)=\tun(0)\rightharpoonup u_{\infty,0} \quad\text{weakly in $L^2(\R^2)$}.
$$
Let $u_\infty$ be the solution to (NLS) with initial data $u_{\infty,0}$ at time $t=0$.

By Theorems~\ref{BG-like} and~\ref{thm:wc} we have
$$
\bigl| \langle \ell, \tilde u_n(t) \rangle - \langle \ell, u_n(t) \rangle\bigr|
\leq \bigl| \langle \ell, \tilde u_n(t) \rangle - \langle \ell, u_\infty(t) \rangle\bigr| +
\bigl| \langle \ell, u_n(t) \rangle - \langle \ell, u_\infty(t) \rangle \bigr| \to 0
$$
as $n\to\infty$.  Thus, it remains to verify that
\begin{align}\label{1.666}
\bigl| \langle p_* \ell, v_n(t) \rangle - \langle \ell, \tilde u_n(t) \rangle \bigr| = o(1) \qtq{as} n\to \infty.
\end{align}

To finish the proof, we now explain why \eqref{1.666} follows from Theorem~\ref{thm:app}, which guarantees that
\begin{align*}
\lim_{n\to \infty}\|\pmll (\chc \tun) - v_n\|_{L^\infty_t L^2_x\tp} = 0.
\end{align*}
As $\ell$ has compact support, we have $\chc \ell=p_*\ell=\ell$ for $n$ sufficiently large.  Hence, by the triangle inequality, Lemma~\ref{lm:cls}, and the Dominated Convergence Theorem,
\begin{align}
\|\ell - \chc &\pmll \ell\|_{L^2_x} \notag \\
&\le \|\chc(1-P_{\le 2DM_n})\ell \|_{L^2_x}+\|\chc (P_{\le 2DM_n}-\pmll)\chc \ell\|_{L^2_x} \label{again41.5}\\
&\lsm \|P_{> 2DM_n}\ell\|_2+M_n^{-1}= o(1) \qtq{as} n\to \infty. \notag
\end{align}
Thus, combining all these observations, we have
\begin{align*}
\text{LHS\eqref{1.666}} &\leq \|\ell\|_{L^2}\|\pmll (\chc \tun(t)) - v_n(t)\|_{L^2_x}
+ \|\tilde u_n\|_{L^\infty_t L^2_x} \|\ell - \chc \pmll \ell\|_{L^2_x} \\
&= o(1) \qtq{as} n\to \infty.
\end{align*}
This proves \eqref{1.666} and so Theorem~\ref{T:dad}.
\end{proof}

\begin{proof}[Proof of Theorem~\ref{thm:nsqz}]
Fix parameters $z_*\in L^2(\R^2)$, $l\in L^2(\R^2)$ with $\|l\|_2=1$, $\alpha \in \C$, $0<r<R<\infty$, and $T>0$.  Let $M:=\|z_*\|_2 + R$ and choose $M_n\to \infty$.  We then choose $D$ and $L_n\to\infty$ as required by Theorem~\ref{T:dad}.  Fix $\delta>0$ so that $R-r>8\delta$.

By density, we can find $\tilde z_*, \tilde l  \in C_c^\infty(\R^2)$ such that
\begin{align}\label{zzstar.1}
\| z_*-\tilde z_*\|_{L^2} \le \delta \qtq{and} \|l-\tilde l\|_{L^2} \le \delta/M  \qtq{with} \|\tilde l\|_2=1.
\end{align}

For $n$ sufficiently large, the supports of $\tilde z_*$ and $\tilde l$ are contained inside the box $[-L_n/4, L_n/4]^2$.  This has two consequences:  First, we can view $\tilde z_*$ and $\tilde l$ as functions on $\T_n=\R^2/L_n\Z^2$.  Second, setting $\chi_n^j$ to denote cutoff functions adapted to $\tilde z_*$ as in subsection~\ref{SS:cutoffs}, we have that $\chi_n^j \tilde z_* = \tilde z_*$ for all values of $j$.

We claim that
\begin{align}\label{zzstar.3}
\|\tilde z_* - P_{\leq D M_n}^{L_n} \tilde z_*\|_{L^2(\btn)} = o(1) \qtq{as} n\to\infty;
\end{align}
indeed, this follows from
\begin{align*}
\text{LHS\eqref{zzstar.3}} &\leq \|\tilde z_* - P_{\leq D M_n} \tilde z_*\|_{L^2(\R^2)} + \| (1-\chc) P_{\leq D M_n} \chb \tilde z_*\|_{L^2(\R^2)} \\
	&\quad \ \  + \|\chc [P_{\leq D M_n} - P_{\leq D M_n}^{L_n}] \chc \tilde z_*\|_{L^2(\R^2)} + \|(1-\chc) P_{\leq D M_n}^{L_n} \chb \tilde z_*\|_{L^2(\btn)}, 
\end{align*}
by applying the Dominated Convergence Theorem (in Fourier variables), Lemma~\ref{lm:mr}, Lemma~\ref{lm:cls}, and Lemma~\ref{lm:mstrs}, respectively, to these terms.  (As noted in subsection~\ref{SS:8.3}, these lemmas apply also to the standard Littlewood--Paley operators.)

Consider now the finite-dimensional Hamiltonian system
\begin{equation*}
(i\partial_t+\Delta) v_n=\pml F(\pml v_n) \qtq{posed on} \mathcal H_n=\{v\in L^2(\btn): \, P_{>2D M_n}^{L_n} v=0\}.
\end{equation*}
By Gromov's symplectic non-squeezing theorem, there exist witnesses contradicting any assertion of squeezing for this system.  In particular, there are solutions $v_n$ with
\begin{align}\label{vn non}
v_{n}(0) \in B_{\mathcal H_n}(P_{\leq D M_n}^{L_n} \tilde z_*, R-4\delta) \qtq{and} |\langle \tilde l, v_n(T)\rangle_{L^2(\btn)} -\alpha| > r + 4\delta.
\end{align}
Note that by \eqref{zzstar.3},
$$
 \| v_n \|_{L^\infty_tL^2_x(\R\times\btn)} \leq M  \quad\text{for $n$ large.}
$$

We now apply Theorem~\ref{T:dad} to obtain a sequence of solutions $u_n$ to (NLS) which have the property that
\begin{align}\label{from1.6}
\bigl| \langle p_*f, v_n(t) \rangle - \langle f, u_n(t) \rangle\bigr| \longrightarrow 0 \quad\text{as $n\to\infty$}
\end{align}
for all $t\in\R$ and all $f\in L^2(\R^2)$ of compact support.  We then pass to a subsequence (in $n$) so that $u_n(0)$ converges weakly, writing $u_0$ for this limit and $u$ for the solution to (NLS) with initial data $u(0)=u_0$.  We will complete the proof of Theorem~\ref{thm:nsqz} by showing that $u$ is a witnesses to non-squeezing in the sense stated there.

First we show that $u_0\in B(z_*,R)$.   Writing $F$ for the set of compactly supported unit vectors in $L^2(\R^2)$, we argue as follows, using \eqref{zzstar.1}, \eqref{from1.6}, \eqref{vn non}, and finally \eqref{zzstar.3}:
\begin{align*}
\| u_0 - z_* \|_2 &\leq  \delta + \sup_{f\in F} \ \liminf_{n\to\infty} \ | \langle f , [u_n(0) - \tilde z_*]\rangle | \\
&\leq \delta + \sup_{f\in F} \ \liminf_{n\to\infty} \ | \langle f , [v_n(0) - \tilde z_*]\rangle | \\
&\leq \delta + (R-4\delta)  + \liminf_{n\to\infty} \ \|\tilde z_* - P_{\leq D M_n}^{L_n} \tilde z_*\|_{L^2} \\
&< R. 
\end{align*}

Second, using Theorem~\ref{BG-like}, then \eqref{zzstar.1}, \eqref{from1.6}, and \eqref{vn non},  we have
\begin{align*}
| \langle \ell , u(T) \rangle -\alpha | &=  \lim_{n\to\infty} | \langle \ell , u_n(T) \rangle -\alpha |
\geq - \delta + \liminf_{n\to\infty} | \langle \tilde \ell , v_n(T) \rangle -\alpha | > r,
\end{align*}
which completes the proof of Theorem~\ref{thm:nsqz}.
\end{proof}

\begin{proof}[Proof of Theorem~\ref{T:fad}]
Let $v_n$ be the solutions to \eqref{approx system} with initial data
\begin{align}\label{E:zero}
v_n(0) = P^{L_n}_{\leq DM_n} p_* \bigl(\chi_{[-L_n/4,L_n/4]} u_n(0)\bigr).
\end{align}
Trivially, we have
$$
\|v_n\|_{L^2} \leq \|u_n\|_{L^2} \leq M.
$$

If Theorem~\ref{T:fad} were to fail, that is,
\begin{align}\label{not1.5}
\limsup_{n\to\infty}  \bigl| \langle p_*\ell_0, v_n(t_0) \rangle_{L^2(\T_n)} - \langle \ell_0, u_n(t_0) \rangle_{L^2(\R^2)}\bigr| \neq 0
\end{align}
for some $\ell_0\in L^2(\R^2)$ of compact support and some $t_0\in\R$, then it fails along some subsequence where $u_n(0)$ converges weakly.  Correspondingly, there is no loss of generality in assuming that
\begin{align}\label{WLOGweak}
u_n(0) \rightharpoonup u_{\infty,0} \quad\text{weakly in $L^2(\R^2)$}
\end{align}
for some $u_{\infty,0}\in L^2(\R^2)$, which we do henceforth.  We also define $u_\infty$ to be the solution to (NLS) with initial data $u_\infty(0)=u_{\infty,0}$.

By Theorem~\ref{BG-like}, we have that $u_n(t_0)\rightharpoonup u_\infty(t_0)$; thus to falsify \eqref{not1.5} and so prove the theorem, we need only show that
\begin{align}\label{yes1.5}
\bigl| \langle p_*\ell_0, v_n(t_0) \rangle_{L^2(\T_n)} - \langle \ell_0, u_\infty(t_0) \rangle_{L^2(\R^2)}\bigr| \longrightarrow 0.
\end{align}

By Theorem~\ref{T:dad}, there are solutions $\tilde u_n$ to (NLS) so that
\begin{align}\label{per1.6}
\bigl| \langle p_*\ell, v_n(t) \rangle_{L^2(\T_n)} - \langle \ell, \tilde u_n(t) \rangle_{L^2(\R^2)}\bigr| \longrightarrow 0
\end{align}
for all $t\in\R$ and all $\ell\in L^2(\R^2)$ of compact support.  Moreover, from \eqref{E:zero}, \eqref{WLOGweak}, \eqref{again41.5}, and the $t=0$ case of \eqref{per1.6}, we see that $\tilde u_n(0) \rightharpoonup u_\infty(0)$ weakly in $L^2(\R^2)$.

We now apply Theorem~\ref{BG-like} to the sequence $\tilde u_n$ to deduce that
$\tilde u_n(t_0) \rightharpoonup u_\infty(t_0)$ weakly in $L^2(\R^2)$.  Combining this with \eqref{per1.6} yields \eqref{yes1.5} and so Theorem~\ref{T:fad}. 
\end{proof}

\end{document}